\newcommand{\incl}[1][r]{\ar@<-0.2pc>@{^(-}[#1] \ar@<+0.2pc>@{-}[#1]}
\newcommand{\ZZZ}{\mathcal{Z}}      
\newcommand{\PPP}{\mathcal{P}} 
\newcommand{\RRR}{\mathcal{R}} 
\newcommand{\XXX}{\mathcal{X}}     
\newcommand{\YYY}{\mathcal{Y}}      
\newcommand{\NNN}{\mathcal{N}}
\newcommand{\FFF}{\mathcal{F}}      
\newcommand{\III}{\mathcal{I}}       
\newcommand{\OOO}{\mathcal{O}}      
\newcommand{\PPr}{\mathbb{P}}      
\newcommand{\HHH}{\mathcal{H}}
\newcommand{\HHHp}{\mathcal{H}^{\mathrm{main}}}
\newcommand{\BBB}{\mathcal{B}}      
\newcommand{\MMM}{\mathcal{M}}     
\newcommand{\NNe}{\mathbb{N}}
\newcommand{\Gmm}{\mathbb{G}_m}      
\newcommand{\Aff}{{\mathbb{A}}_{k}^{1}}
\renewcommand{\ggg}{\mathfrak{g}}      
\newcommand{\ssl}{\mathfrak{sl}}       
\newcommand{\rank}{\mathrm{rk}}    
\newcommand{\Irr}{\mathrm{Irr}}
\newcommand{\Gra}{\mathrm{Gr}}      
\newcommand{\Hom}{\mathrm{Hom}}
\newcommand{\tra}{\mathrm{tr}}          
\newcommand{\Ker}{\mathrm{Ker}}
\newcommand{\Aut}{\mathrm{Aut}}
\newcommand{\Stab}{\mathrm{Stab}} 
\newcommand{\End}{\mathrm{End}} 
\newcommand{\Ima}{\mathrm{Im}} 
\newcommand{\Mor}{\mathrm{Mor}}
\newcommand{\Hilb}{\mathrm{Hilb}}
\newcommand{\Spec}{\mathrm{Spec}}
\newcommand{\multi}{\mathrm{multi}}
\renewcommand{\labelitemi}{$\bullet$}
\theoremstyle{plain}
\newtheorem{theorem}{Theorem}[section]
\newtheorem{lemma}[theorem]{Lemma}
\newtheorem{proposition}[theorem]{Proposition}
\newtheorem*{lemma*}{Key-Proposition}
\newtheorem*{proposition*}{Reduction Principle}
\newtheorem{corollary}[theorem]{Corollary}
\newtheorem*{question}{Question}
\newtheorem*{maintheorem}{Theorem}
\theoremstyle{definition}
\newtheorem{definition}[theorem]{Definition}
\theoremstyle{remark}
\newtheorem{remark}[theorem]{Remark}
\begin{document}

\title[IHS and desingularizations of quotients]{Invariant Hilbert schemes and desingularizations of quotients by classical groups}

\author{Ronan TERPEREAU}

%\authors{R.~Terpereau
%\address Universit\'{e} Grenoble I,\\ Institut Fourier,\\
%UMR 5582 CNRS-UJF, BP 74,\\
%38402 St. Martin d'H\`{e}res C\'{e}dex, FRANCE
%\email ronan.terpereau@ujf-grenoble.fr
%}

%\received{December 11, 1995}
%\accepted{February 29, 1996}

\begin{abstract}
Let $W$ be a finite-dimensional representation of a reductive algebraic group $G$. The invariant Hilbert scheme $\HHH$ is a moduli space that classifies the $G$-stable closed subschemes $Z$ of $W$ such that the affine algebra $k[Z]$ is the direct sum of simple $G$-modules with prescribed multiplicities. In this article, we consider the case where $G$ is a classical group acting on a classical representation $W$ and $k[Z]$ is isomorphic to the regular representation of $G$ as a $G$-module. We obtain families of examples where $\HHH$ is a smooth variety, and thus for which the Hilbert-Chow morphism $\gamma:\ \HHH \rightarrow W/\!/G$ is a canonical desingularization of the categorical quotient. 
\end{abstract}

\maketitle

\section*{Introduction and statement of the main results} \label{introo}

The main motivation for this article comes from a classical construction of canonical desingularizations of quotient varieties. Let $k$ be an algebraically closed field of characteristic zero, $G$ a reductive algebraic group over $k$, and $W$ a finite-dimensional linear representation of $G$. We denote 
$$\nu:\ W \rightarrow W/\!/G$$
the quotient morphism, where $W/\!/G:=\Spec(k[W]^G)$ is the categorical quotient. In general, $\nu$ is not flat and the variety $W/\!/G$ is singular. A universal "flattening" of $\nu$ is given by the invariant Hilbert scheme constructed by Alexeev and Brion (\cite{AB, Br}). 
We recall briefly the definition (see Section \ref{generaliteesHilbert} for details). Let $\Irr(G)$ be the set of isomorphism classes of irreducible representations of $G$, and $h$ a function from $\Irr(G)$ to $\NNe$. Such a function $h$ is called a \textit{Hilbert function}. The invariant Hilbert scheme $\Hilb_{h}^{G}(W)$ parametrizes the $G$-stable closed subschemes $Z$ of $W$ such that
$$k[Z] \cong \bigoplus_{M \in \Irr(G)} M^{\oplus h(M)}$$ 
as $G$-modules. If $h=h_W$ is the Hilbert function of the general fibers of $\nu$, then we denote
$$\HHH:=\Hilb_{h_W}^G(W)$$ 
to simplify the notation. Let $\XXX \subset \HHH \times W$, equipped with the first projection $\pi:\ \XXX \rightarrow \HHH$, be the \textit{universal family} over $\HHH$. The morphism $\pi$ is flat by definition and there is a commutative diagram
$$\xymatrix{
    \XXX \ar[r]^{\pi}  \ar[d]_{p}   & \HHH  \ar[d]^{\gamma}  \\
      W  \ar[r]_(0.4){\nu}              &  W/\!/G 
    }$$
where $p$ is the second projection and $\gamma$ is the \textit{Hilbert-Chow morphism} that sends a closed subscheme $Z \subset W$ to the point $Z/\!/G \subset W/\!/G$. The Hilbert-Chow morphism is projective and induces an isomorphism over the largest open subset $(W/\!/G)_* \subset W/\!/G$ over which $\nu$ is flat. In particular, $\nu$ is flat if and only if $\HHH \cong W/\!/G$. The \textit{main component} of $\HHH$ is the subvariety defined by
$$\HHHp:=\overline{\gamma^{-1}((W/\!/G)_*)}.$$   
Then the restriction 
$$\gamma:\ \HHHp \rightarrow W/\!/G$$
is a projective birational morphism.

\begin{question}
In which cases is the Hilbert-Chow morphism, possibly restricted to the main component, a desingularization of $W/\!/G$?
\end{question}

When $G$ is a finite group, Ito and Nakamura defined the $G$-Hilbert scheme $G$-$\Hilb(W)$ as the closure of the set of the free $G$-orbits in the fixed point subscheme of the punctual Hilbert scheme of $|G|$ points in $W$ (see \cite{IN1,IN2}). In particular, if $G$ acts freely on $W$, then the $G$-Hilbert scheme $G$-$\Hilb(W)$ coincides with the main component $\HHHp$. The case where $G$ is a finite subgroup of $SL_n$ acting on the defining representation $k^n$ has been extensively studied; let us recall some relevant results:
\begin{itemize}
\item If $\dim(W)=2$, then $\HHH$ is always a smooth variety. In particular, if $W=k^2$ and if $G \subset SL(W)$, then Ito and Nakamura showed that $\gamma$ is the minimal desingularization of the quotient surface $W/G$ (see [loc. cit.]). 
\item If $\dim(W)=3$ and $G \subset SL(W)$, Bridgeland, King and Reid showed, using homological methods, that once again $\HHH$ is a smooth variety and that $\gamma$ is a crepant desingularization of $W/G$ (see \cite{BKR}).
\item If $\dim(W)=4$ and $G \subset SL(W)$, then $\HHH$ can be singular. For instance, if $G \subset SL_2$ is the binary tetrahedral group and if $W$ is the direct sum of two copies of the defining representation, then Lehn and Sorger showed that $\HHH$ has two irreducible components but that $\HHHp$ is smooth (see \cite{LS}). 
\end{itemize} 
However, when $G$ is infinite, this question is open and completely unexplored. 
In this article, we study the case where $G$ is a classical group and $W$ is a classical representation of $G$. We then show that the invariant Hilbert scheme $\HHH$ is still a desingularization of $W/\!/G$ in "small" cases (but not in general). In addition, unlike the case where $G$ is finite, it may happen that $W/\!/G$ is smooth but $\nu$ is not flat; then $\gamma$ is not an isomorphism.

Let $V$, $V'$, $V_1$ and $V_2$ be vector spaces of dimension $n$, $n'$, $n_1$ and $n_2$ respectively. We consider the following cases:
\begin{enumerate} 
\item $G=SL(V)$ acting naturally on $W:=\Hom(V',V)=V^{\oplus n'}$, the direct sum of $n'$ copies of the defining representation;
\item $G=O(V)$ acting naturally on $W:=\Hom(V',V)=V^{\oplus n'}$; 
\item $G=Sp(V)$, with $n$ even, acting naturally on $W:=\Hom(V',V)=V^{\oplus n'}$; 
\item $G=GL(V)$ acting naturally on $W:=\Hom(V_1,V) \oplus \Hom(V,V_2)=V^{\oplus n_1} \oplus V^{* \oplus n_2}$, the direct sum of $n_1$ copies of the defining representation and $n_2$ copies of its dual.
\end{enumerate}

In these four cases, the description of the quotient morphism $\nu$ is well-known and follows from the \textit{First Fundamental Theorem} for the classical groups (\cite[\S 9.1.4,\S 11.1.2,\S 11.2.1]{Pr}): 
\begin{itemize}
\item \textit{Case 1.} $\nu$ is the natural map  
 $$\begin{array}{ccccc}
   \Hom(V',V)  & \rightarrow  &  \Hom(\Lambda^n(V'),\Lambda^n(V)) &\cong &\Lambda^n (V'^*), \\
         w          & \mapsto      &    \Lambda^n(w)& &  
\end{array}$$ 
where $\Lambda^n (V'^*)$ denotes the $n$-th exterior power of $V'^*$. We distinguish between three different cases:\\
\indent \ -if $n'<n$, then $W/\!/G=\{0\}$ and $\nu$ is trivial; \\
\indent \ -if $n'=n$, then $W/\!/G=\Lambda^n (V'^*) \cong \Aff$ and $\nu(w)=\det(w)$;\\
\indent \ -if $n' >n$, then $W/\!/G=C(\Gra(n,V'^*))$ is the affine cone over the Grassmannian $\Gra(n,V'^*)$ of the $n$-dimensional subspaces of $V'^*$ viewed as a subvariety of $\PPr(\Lambda^n V'^*)$ via the Pl\"{u}cker embedding.\\
One may check that $W/\!/G=\Lambda^n (V'^*)$ if and only if $n=1$ or $n \geq n'-1$. We will see in Section \ref{casSln} that $\nu$ is flat if and only if $n=1$ or $n' \leq n$. 
\item \textit{Case 2.} $\nu$ is the composite map 
$$\begin{array}{ccccc}
\Hom(V',V)   & \rightarrow  & \Hom(S^2(V'),S^2(V)) & \rightarrow & S^2(V'^*), \\
       w  & \mapsto      &  S^2(w)  & \mapsto &  q_1(S^2(w))
\end{array}$$ 
where $S^2(V'^*)$ denotes the symmetric square of $V'^*$, and $q_1$ is the morphism induced by the linear projection from $S^2(V)$ onto the line generated by a non-degenerate quadratic form. It follows that  
$$W/\!/G=S^2(V'^*)^{\leq n}:=\left\{ Q \in  S^2(V'^*) \  \mid \ \rank(Q) \leq n \right\} $$
is a symmetric determinantal variety. One may check (see \cite[Corollaire 3.1.7]{Te1}) that $\nu$ is flat if and only if $n \geq 2n'-1$. 
\item \textit{Case 3.} $\nu$ is the composite map 
$$\begin{array}{ccccc}
\Hom(V',V)   & \rightarrow  & \Hom(\Lambda^2(V'),\Lambda^2(V)) & \rightarrow & \Lambda^2(V'^*),  \\
        w  & \mapsto      &   \Lambda^2(w)  & \mapsto &  q_2(\Lambda^2(w))
\end{array}$$ 
where $q_2$ is the morphism induced by the linear projection from $\Lambda^2(V)$ onto the line generated by a non-degenerate skew-symmetric bilinear form. It follows that  
$$W/\!/G=\Lambda^2(V'^*)^{\leq n}:=\left\{ Q \in  \Lambda^2(V'^*) \  \mid \ \rank(Q) \leq n \right\} $$
is a skew-symmetric determinantal variety. One may check (see \cite[Corollaire 3.3.8]{Te1}) that $\nu$ is flat if and only if $n \geq 2n'-2$. 
\item \textit{Case 4.} $\nu$ is the natural map
$$\begin{array}{ccc}
 {\Hom}(V_1,V) \times {\Hom}(V,V_2)  & \rightarrow  & {\Hom}(V_1,V_2),  \\
        (u_1,u_2)  & \mapsto      &  u_2 \circ u_1
\end{array}$$
and thus
$$W/\!/G={\Hom}(V_1,V_2)^{\leq n}:=\{ f \in  {\Hom}(V_1,V_2) \ \mid  \ \rank(f) \leq n \}$$
is a determinantal variety. We will see in Section \ref{GLngeneral} that $\nu$ is flat if and only if $n \geq n_1+n_2-1$. 
\end{itemize}

The main result of this article is the following 

\begin{maintheorem} 
In the following cases, the invariant Hilbert scheme $\HHH$ is a smooth variety and the Hilbert-Chow morphism is the succession of blows-up described as follows:
\begin{itemize}
 \item \emph{Case 1}. Let $C_0$ be the blow-up of the affine cone $C(\Gra(n,V'^*))$ at 0.\\ 
If $n'>n>1$, then $\HHH$ is isomorphic to $C_0$. 
\item \emph{Case 2}. Let $\YYY_{0}^{s}$ be the blow-up of the symmetric determinantal variety $S^2(V'^*)^{\leq n}$ at 0.
\begin{itemize} 
\item If $n'>n=1$ or $n'=n=2$, then $\HHH$ is isomorphic to $\YYY_{0}^{s}$.
\item If $n'>n=2$, then $\HHH$ is isomorphic to the blow-up of $\YYY_{0}^{s}$ along the strict transform of $S^2(V'^*)^{\leq 1}$.
\end{itemize}
\item \emph{Case 3}. Let $\YYY_{0}^{a}$ be the blow-up of the skew-symmetric determinantal variety $\Lambda^2(V'^*)^{\leq n}$ at 0.
\begin{itemize} 
\item If $n'=n=4$, then $\HHH$ is isomorphic to $\YYY_{0}^{a}$.
\item If $n'>n=4$, then $\HHH$ is isomorphic to the blow-up of $\YYY_{0}^{a}$ along the strict transform of $\Lambda^2(V'^*)^{\leq 2}$.
\end{itemize}
\item \emph{Case 4}. Let $\YYY_0$ be the blow-up of the determinantal variety $\Hom(V_1,V_2)^{\leq n}$ at 0.
\begin{itemize}
\item If $\max(n_1,n_2)>n=1$ or $n_1=n_2=n=2$, then $\HHH$ is isomorphic to $\YYY_0$.  
\item If $\min(n_1,n_2) \geq n=2$ and $\max(n_1,n_2)>2$, then $\HHH$ is isomorphic to the blow-up of $\YYY_0$ along the strict transform of $\Hom(V_1,V_2)^{\leq 1}$.
\end{itemize}
\end{itemize}
\end{maintheorem}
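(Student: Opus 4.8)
The plan is to treat each of the four cases in parallel, using the same three-step strategy, and to reduce everything to a local computation near the most singular point (the origin).

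First I would set up the "reduction principle'' alluded to by the custom theorem environment \verb|\newtheorem*{proposition*}{Reduction Principle}|: since $G$ is reductive and $W$ is a $G$-module with $k[Z]$ forced to be the regular representation, the invariant Hilbert scheme $\HH$ carries a natural action of the group $\Gm \times \mathrm{GL}(V') $ (resp.\ of the appropriate product of general linear groups on the $V_i$), compatible via $\gamma$ with the linear action on $W/\!/G$. The torus $\Gm$ acting by scaling on $W$ contracts everything to the fibre $\gamma^{-1}(0)$, so $\HH$ is smooth if and only if it is smooth along $\gamma^{-1}(0)$, and in fact $\HH$ is determined by an affine chart around a single point of this fibre. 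This is where I expect to invoke whatever local model / Luna-slice-type statement has been proved in the earlier sections: near a generic point of each determinantal stratum the map $\nu$ is, up to smooth factors, a product of lower-dimensional instances of the same problem, so one only needs to understand the "new'' behaviour introduced at the deepest stratum, which by the torus argument is the origin.

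Second, for the deepest stratum I would compute the tangent space $T_{[Z_0]}\HH$ at the point $[Z_0]$ corresponding to the (reduced, $G$-stable) fibre $Z_0=\nu^{-1}(0)$, using the standard identification $T_{[Z_0]}\HH \cong \Hom_{k[W]}^G(I_{Z_0}, k[Z_0])^G$ (the $G$-invariant part of the homomorphisms of the ideal into the coordinate ring, in degree zero for the grading). Via the First Fundamental and Second Fundamental Theorems for the classical group in question — which give generators and relations for $k[W]^G$ and for the ideal of the image — this becomes an explicit finite-dimensional linear-algebra computation. The claim to verify is that in precisely the listed numerical ranges the dimension of this tangent space equals $\dim \HHp$, i.e.\ $\dim W/\!/G$; smoothness at $[Z_0]$ then follows, and combined with the torus-contraction argument of Step~1 it yields smoothness of all of $\HH$ (and in particular $\HH=\HHp$ is irreducible). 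Outside those ranges the tangent space is strictly bigger, which is what accounts for the words ``but not in general'' in the introduction.

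Third, having established smoothness, I would identify $\gamma$ with the stated tower of blow-ups. The idea is that $\HH$, being smooth and projective-birational over the cone/determinantal variety $W/\!/G$ and equivariant for the linear group action, must be a $\Gm$-equivariant resolution dominated by (and dominating) the normalized blow-up of the $G$-fixed point $0$; one then checks by a dimension/exceptional-divisor count, or by pulling back the tautological generators, that the blow-up of $0$ already resolves the singularity when $n$ is large enough ($n=1$, or $n=2$, etc.), and that when $n=2$ with a larger outer dimension a single further blow-up along the strict transform of the next determinantal stratum $W/\!/G^{\le 1}$ does the job. Concretely I would produce a morphism $\HH \to \YY_0$ (resp.\ to the iterated blow-up) from the universal property of blow-ups applied to a natural ideal sheaf on $\HH$ (the one cutting out $\gamma^{-1}(0)$, or its strict-transform analogue), show it is proper birational between smooth varieties with the same Picard rank, and conclude it is an isomorphism.

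The main obstacle, I expect, is Step~2: carrying out the tangent-space computation uniformly and pinning down \emph{exactly} the numerical thresholds. The SFT relations for $O(V)$, $Sp(V)$ and the mixed $GL(V)$ case are combinatorially heavier than the pure $SL$ case, and the $G$-invariant-$\Hom$ computation must be organized by isotypic components; the special values $n=2$ (where an extra blow-up is needed) and $n=4$ in the symplectic case signal that genuinely exceptional small-rank coincidences among classical-group representations are being used, so the computation has to be sharp rather than asymptotic. Steps~1 and~3 are more formal, relying on the $\Gm$-action and on standard properties of the Hilbert–Chow morphism and of blow-ups, but Step~3 also requires knowing that the relevant blow-ups are themselves smooth — i.e.\ a separate (classical, but not entirely trivial) computation that the blow-up of a determinantal variety at $0$, and then along the next stratum, is nonsingular in the stated ranges.
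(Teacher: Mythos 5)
Your Step 2 rests on a misidentification that would sink the computation: the distinguished point of $\HH$ over $0$ is \emph{not} the (reduced) fibre $\nu^{-1}(0)$. The null cone $\nu^{-1}(0)$ has the wrong Hilbert function — in Case 4 with $n_1=n_2=n=2$, for example, the multiplicity of $S^{k_1\epsilon_1-k_2\epsilon_2}(V)$ in $k[W]/J$ (where $J$ is the ideal of the null cone) is $(k_1+1)(k_2+1)$, strictly larger than $\dim S^{k_1\epsilon_1-k_2\epsilon_2}(V)=k_1+k_2+1$ — so $\nu^{-1}(0)$ does not even define a point of $\HH$. The paper's argument instead determines the fixed points of a Borel subgroup $B'\subset G'$ acting on $\HH$: one must show there is a \emph{unique} $B'$-stable homogeneous ideal with Hilbert function $h_W$ (it is an ideal $I$ strictly containing $J$, generated by the invariants of degree $2$ together with one extra copy of $sl(V)$), and this uniqueness requires the explicit Kraft--Schwarz description of $(k[W]/J)^{U'\times U}$. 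Your $\Gm$-contraction only reduces smoothness to the whole fibre $\gamma^{-1}(0)$, which contains a positive-dimensional closed orbit (e.g.\ $G'/B'\cong\PP^1\times\PP^1$); reducing further to finitely many points needs the Borel fixed-point theorem for the full $G'$-action, and the identification of those points is the real work, absent from your sketch. Computing a tangent space at the wrong subscheme, or at an unspecified point of $\gamma^{-1}(0)$, proves nothing about $\HH$.

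Two further differences matter. First, the paper's Reduction Principle is not a Luna-slice/local-model statement about $\nu$: it is a global structure result, proved via the Key-Proposition morphisms $\delta_M:\HH\to\Gr(h_W(M),F_M^*)$ (for $M=V^*$, resp.\ $M=V,V^*$), exhibiting $\HH$ as a $G'$-homogeneous bundle over $\Gr(n,V'^*)$ (resp.\ $\Gr(n,V_1^*)\times\Gr(n,V_2)$) with fibre the small-case invariant Hilbert scheme; an \'etale-local decomposition of $\nu$ along Luna strata does not by itself yield any decomposition of $\HH$. Second, these same morphisms $\delta_M$ are what produce the maps to the blow-ups in Step 3: the isomorphism $\HH\cong\YY_0$ in the core case comes from $\gamma\times\delta$ being quasi-finite (again via the unique $B'$-fixed point), proper and birational onto a smooth, hence normal, target, so Zariski's Main Theorem applies; and the \emph{second} blow-up in the $n=2$, $\max(n_1,n_2)>2$ cases arises exactly from the bundle structure over the Grassmannians, identified with the blow-up of the strict transform of the rank-$1$ locus by an explicit graph computation. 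Your proposed route via the universal property of the blow-up plus a Picard-rank count would additionally require proving that the pulled-back ideal sheaves are invertible on $\HH$ and computing Picard ranks, and it gives no mechanism for locating the second centre; as written, the proposal is missing the ingredients ($\delta_M$, the fixed-point analysis, and the Hilbert-function computation for the candidate ideal) that make the paper's proof go through.
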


When $W/\!/G$ is singular and Gorenstein, we will see that the desingularization $\gamma$ is never crepant. Also, we conjecture that in Cases 1-4, the invariant Hilbert scheme $\HHH$ is smooth if and only if $\nu$ is flat or we are in one of the cases of the above theorem. In this direction, we will show in another article (also partially extracted from \cite{Te1}) that $\HHH$ is singular in Cases 2 and 4 for $n=3$, and also in the case where $G=SO(V)$ acts naturally on $W=\Hom(V',V)$ with $n'=n=3$. 

A key ingredient in the proof of the main Theorem is a group action on $\HHH$ with finitely many orbits. Indeed, for any reductive algebraic group $G$, any finite dimensional $G$-module $W$, and any algebraic subgroup
$$ G' \subset \Aut^G(W),$$
it is known that $G'$ acts on $W/\!/G$ and $\HHH$, and that the quotient morphism and the Hilbert-Chow morphism are $G'$-equivariant. To describe the flat locus of $\nu$, it is almost enough to know the dimension of the fiber of $\nu$ over one point of each orbit. In the same way, determining the tangent space of $\HHH$ at a point of each closed orbit is enough to show that $\HHH$ is smooth, thanks to a semicontinuity argument. 

Another important ingredient of this article, which was already used by Becker in \cite[\S 4.1]{Be}, is the

\begin{lemma*} 
Let $G$, $W$ and $G'$ be as above. For any $M \in \Irr(G)$, there exists a finite-dimensional $G'$-submodule $F_M \subset \Hom^G(M,k[W])$ 
that generates $\Hom^G(M,k[W])$ as a $k[W]^G,G'$-module, and there exists
a $G'$-equivariant morphism
$${\delta}_{M}:\ \HHH \rightarrow \Gra(h_W(M),F_{M}^{*}).$$  
\end{lemma*} 

Thanks to the Key-Proposition, we obtain the following result which shows that it suffices to describe $\HHH$ in "small" cases to understand all cases:

\begin{proposition*}
Let $G$ and $W$ be as in Cases 1-4. We suppose that $n' \geq n$ resp. $n_1,n_2 \geq n$, and we fix $E \in \Gra(n,V'^*)$ resp. $(E_1,E_2) \in \Gra(n,V_1^*) \times \Gra(n,V_2)$. In Cases 1-3, we denote $W':=\Hom(V'/E^{\perp},V)$ resp. in Case 4, we denote $W':=\Hom(V_1/E_1^{\perp},V) \times \Hom(V,E_2)$, where $E^{\perp}$ resp. $E_1^{\perp}$, denotes the orthogonal subspace to $E$ in $V'$ resp. to $E_1$ in $V_1$.\\
Then the invariant Hilbert scheme $\HHH$ is the total space of a homogeneous bundle over $\Gra(n,V'^*)$ in Cases 1-3 resp. over $\Gra(n,V_1^*) \times \Gra(n,V_2)$ in Case 4, whose fiber is isomorphic to $\HHH':=\Hilb_{h_{W'}}^{G}(W')$.  
\end{proposition*}

For instance, to treat the case of $GL_2$ acting on $V^{\oplus n_1} \oplus V^{* \oplus n_2}$ with $n_1,n_2 \geq 2$, we just have to consider $V^{\oplus 2} \oplus V^{* \oplus 2}$. The reduction principle is the most important theoretical result of this article and will certainly be helpful to determine further examples of invariant Hilbert schemes.

To show the main Theorem, we have to proceed case by case but we follow a general method:
First, we perform the reduction step. Then, we look for the closed $G'$-orbits in $\HHH$, where $G'$ is a reductive algebraic subgroup of $\Aut^G(W)$. In Cases 1-4, such orbits are projective and thus contain fixed-points for the action of a Borel subgroup $B' \subset G'$. To determine these fixed-points, we significantly use representation theory of $B'$ and $G$. In Cases 1-4, we show that $\HHH$ has only one fixed-point that we denote $[Z_0]$. We deduce from Lemma \ref{fixespoints} that $\HHH$ is connected and that $[Z_0]$ belongs to the main component $\HHHp$. We then determine the Zariski tangent space $T_{[Z_0]} \HHH$ and we check that its dimension is the same as that of $\HHHp$. We thus get that $\HHH=\HHHp$ is a smooth variety.  

It is known that there exists a finite subset $\mathcal{E}$ of $\Irr(G)$ such that the morphism
$$\gamma \times \prod_{M \in \mathcal{E}} \delta_M:\ \HHH \longrightarrow W/\!/G \times \prod_{M \in \mathcal{E}} \Gra(h_W(M),F_M^*)$$
is a closed embedding; this is a consequence of the construction of the invariant Hilbert scheme as a closed subscheme of the multigraded Hilbert scheme of Haiman and Sturmfels (\cite{HS}). This suggests to choose an appropriate simple representation $M_1 \in \Irr(G)$ and to check whether $\gamma \times \delta_{M_1}$ is a closed embedding of $\HHH$. If this holds, then we have to identify the image; otherwise, we choose another simple representation $M_2$ and we look if $\gamma \times \delta_{M_1} \times \delta_{M_2}$ is a closed embedding. This procedure must stop after a finite number of steps and we get an explicit closed embedding of $\HHH$ as simple as possible.

In Section \ref{generaliteesHilbert}, we recall some basic results and we give a proof of the Key-Proposition. Case 1 (the easiest one) is treated in Section \ref{casSln}. Case 4 (the most difficult one) is treated for $n=2$ in Section \ref{GLngeneral}. The details for the other cases (except Case 2 for $n=1$ which is an easy exercise left to the reader!) can be found in the thesis \cite{Te1} from which this article is extracted.  

To conclude, let us mention that we can also use invariant Hilbert schemes to construct canonical desingularizations of some symplectic varieties. Let $G \subset GL(V)$ be as in Cases 2-4 and $W=V^{\oplus n'} \oplus V^{* \oplus n'}$. Then $W$ is a symplectic representation of $G$ and one can define a moment map $\mu:\ W \rightarrow \ggg^*$, where $\ggg$ is the Lie algebra of $G$. The \textit{symplectic reduction} of $W$ is defined as $\mu^{-1}(0)/\!/G$. In the article \cite{Te2} (also extracted from \cite{Te1}), we obtain families of examples for which $\gamma:\ \HHHp \rightarrow \mu^{-1}(0)/\!/G$ is a desingularization, sometimes symplectic, but where $\HHH$ is reducible in general.\\

\noindent \textbf{Acknowledgments:} I am very thankful to Michel Brion for proposing this subject to me and for a lot of helpful discussions, I thank Hanspeter Kraft for ideas and corrections concerning Section \ref{subsectionJ}, and I thank Tanja Becker for very helpful discussions during her stay in Grenoble in October 2010. I also thank the referees for carefully reading this paper.

\section{Generalities on invariant Hilbert schemes}  \label{generaliteesHilbert}

\subsection{}
The survey \cite{Br} gives a detailed introduction to the invariant Hilbert schemes. In this section, we recall some definitions and useful properties of these schemes. All the schemes we consider are supposed to be separated and of finite type over $k$. Let $G$ be a reductive algebraic group and $N$ a rational $G$-module, we have the decomposition
\begin{equation*} 
N \cong \bigoplus_{M \in \Irr(G)} N_{(M)} \otimes M,
\end{equation*}
where $N_{(M)}:=\Hom^G(M,N)$ is the space of $G$-equivariant morphisms from $M$ to $N$. The $G$-module $N_{(M)}\otimes M$ is called the isotypic component of $N$ associated to $M$ and $\dim(N_{(M)})$ is the multiplicity of $M$ in $N$. If, for any $M \in \Irr(G)$, we have $\dim(N_{(M)})< \infty$, then we define
\begin{equation*}
\begin{array}{ccccc}
h & : & \Irr(G) & \rightarrow & \NNe \\
& & M & \mapsto & \dim(N_{(M)}) 
\end{array}
\end{equation*}
the \textit{Hilbert function} of $N$.

Let $S$ be a scheme, $\ZZZ$ a $G$-scheme and $\pi: \ZZZ \rightarrow S$ an affine morphism, of finite type and $G$-invariant. According to \cite[\S 2.3]{Br}, the sheaf $\FFF:=\pi_{*} \OOO_{\ZZZ}$ admits the following decomposition as a (sheaf of) $\OOO_S,G$-modules: 
\begin{equation} \label{eq1}
\FFF \cong \bigoplus_{M \in \Irr(G)} \FFF_{(M)} \otimes M.
\end{equation}
The action of $G$ on $\FFF$ comes from the action of $G$ on each $M$, and each $\FFF_{(M)}:=\Hom^{G}(M,\FFF)$ is a coherent $\FFF^G$-module. From now on, we suppose that the family $\pi$ is \textit{multiplicity finite}, that is to say, $\FFF^G$ is a coherent $\OOO_S$-module. 
If, in addition, $\pi$ is flat, then each $\OOO_S$-module $\FFF_{(M)}$ is locally free of finite rank, and this rank is constant over each connected component of $S$. Let $h:\ \Irr(G) \rightarrow \NNe$ be a Hilbert function and $W$ a finite-dimensional $G$-module.

\begin{definition}
We define the Hilbert functor ${Hilb}_{h}^{G}(W)$: ${Sch}^{op} \rightarrow Sets$ by 
$$S \mapsto \left\{ 
\begin{array}{c} 
\xymatrix{
\ZZZ \ar@{.>}_{\pi}[dr]  \ar@{^{(}->}[r] &   S \times W \ar@{->}[d]^{p_1}\\
                     &  S } \end{array}
\middle| 
\begin{array}{l} \ZZZ \ \text{is a $G$-stable closed subscheme};  \\
{\pi}\ \text{is a flat morphism};\\
{\pi}_{*} {\OOO}_{\ZZZ} \cong \bigoplus_{M \in \Irr{(G)}} {\FFF}_{M} {\otimes} M;\\
 {\FFF}_{M} \ \text{is locally free of rank $h(M)$ over } {\OOO}_S.  \end{array} 
\right\} .$$
\end{definition} 

An element $(\pi:\ \ZZZ \rightarrow S) \in {Hilb}_{h}^{G}(W)(S)$ is called a \textit{flat family of $G$-stable closed subschemes of $W$ over $S$}. By \cite[Theorem 2.11]{Br}, the functor ${Hilb}_{h}^{G}(W)$ is represented by a quasi-projective scheme ${\Hilb}_{h}^{{G}}(W)$: the \textit{invariant Hilbert scheme} associated to the $G$-module $W$ and to the Hilbert function $h$. We recall that we denote $\HHH:=\Hilb_{h_W}^{G}(W)$, where $h_W$ is the Hilbert function of the general fibers of $\nu :\ W \rightarrow W/\!/G$. We denote $\XXX \subset W \times \HHH$, equipped with the second projection $\pi:\ \XXX \rightarrow \HHH$, the \textit{universal family} over $\HHH$. 

\begin{proposition} {\bf (cf. \cite[Proposition 3.15]{Br})} \label{chow}
With the above notation, the diagram
\begin{equation} \label{diag1}
        \xymatrix{
     \XXX \ar[d]_{\pi} \ar[r]^{q} & W \ar@{->>}[d]^(0.4){\nu} \\
     \HHH \ar[r]_(0.4){\gamma} & W/\!/G 
    }
\end{equation}
commutes, where $\pi$ and $q$ are the natural projections.\\
Moreover, the pull-back of $\gamma$ to the flat locus of $\nu$ is an isomorphism.
\end{proposition}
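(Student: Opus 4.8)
The plan is to treat separately the commutativity of the square \eqref{diag1} --- which is essentially formal once one knows that $(\pi_*\OO_\XX)^G=\OO_\HH$ --- and the assertion about the flat locus, which I would prove by producing an explicit inverse and checking that it is one.

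\textbf{Commutativity.} Put $\FF:=\pi_*\OO_\XX$. Since $\pi$ is flat, each isotypic component $\FF_{(M)}$ is locally free of rank $h_W(M)$; in particular $\FF^G=\FF_{(k)}$ is locally free of rank $h_W(k)=1$ (here $k$ denotes the trivial $G$-module, and $h_W(k)=1$ because a general fibre $\nu^{-1}(s)$ satisfies $k[\nu^{-1}(s)]^G=k[W]^G/\mathfrak m_s=k$, by exactness of $(-)^G$ and the Reynolds splitting). As $\FF^G$ is moreover a sheaf of $\OO_\HH$-algebras containing $\OO_\HH$ through the unit, the inclusion $\OO_\HH\hookrightarrow\FF^G$ is an isomorphism. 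Now $\pi$ is affine, so $\Gamma(\XX,\OO_\XX)=\Gamma(\HH,\FF)$, and exactness of $(-)^G$ gives $\Gamma(\XX,\OO_\XX)^G=\Gamma(\HH,\FF^G)=\Gamma(\HH,\OO_\HH)$. The $G$-invariant morphism $\nu\circ q\colon\XX\to W/\!/G$ thus corresponds to a $k$-algebra map $k[W]^G\to\Gamma(\XX,\OO_\XX)^G=\Gamma(\HH,\OO_\HH)$, i.e.\ to a morphism $\gamma\colon\HH\to W/\!/G$; composing this datum back with $\pi^{\#}\colon\OO_\HH\hookrightarrow\FF$ recovers $(\nu\circ q)^{\#}$, so $\gamma\circ\pi=\nu\circ q$. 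On closed points $\gamma([Z])=\Spec(k[Z]^G)=Z/\!/G$, so $\gamma$ is the Hilbert--Chow morphism.

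\textbf{The flat locus.} Write $U:=(W/\!/G)_*$; it is open, dense and connected in the irreducible variety $W/\!/G$. Over $U$ the morphism $\nu$ is flat, affine and $G$-invariant, and since $U$ meets the generic point, the isotypic components of $\nu_*\OO_{\nu^{-1}(U)}$ are locally free of constant ranks $h_W(M)$. Regarding $\nu^{-1}(U)$ as a $G$-stable closed subscheme of $W\times U$ via $w\mapsto(w,\nu(w))$, we obtain a flat family of Hilbert function $h_W$, hence a morphism $\iota\colon U\to\HH$; by construction and by \eqref{diag1}, $\gamma\circ\iota=\mathrm{id}_U$. Thus $\iota$ is a section of the separated morphism $\gamma$ over $U$, hence a closed immersion with image inside $\gamma^{-1}(U)$. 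It then suffices to prove that $\gamma$ is a monomorphism over $U$. So let $T\to\HH$ land in $\gamma^{-1}(U)$, classifying a flat family $\rho\colon\ZZ\to T$, $\ZZ\subset W\times T$, of Hilbert function $h_W$, and let $f\colon T\to U$ be the factorization of the composite $T\to\HH\xrightarrow{\gamma}W/\!/G$. By \eqref{diag1} the map $k[W]^G\to(\rho_*\OO_\ZZ)^G=\OO_T$ induced by $\ZZ$ is $f^{\#}$, so the ideal of $\ZZ$ in $W\times T$ contains that of $f^{*}\nu^{-1}(U):=\nu^{-1}(U)\times_U T$, giving a $G$-equivariant surjection of $\OO_T$-algebras $\OO_{f^{*}\nu^{-1}(U)}\twoheadrightarrow\OO_\ZZ$. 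Both sides are flat over $T$ (the target by hypothesis, the source because $\nu$ is flat over $U$), and on each fibre over $T$ the map is a surjection between $G$-modules with the same finite multiplicities $h_W(M)$, hence an isomorphism on every isotypic component, hence an isomorphism. Working one isotypic component at a time --- where each $\OO_T$-module is finitely generated, using that $k[W]_{(M)}$ is finitely generated over $k[W]^G$ --- Nakayama upgrades a fibrewise isomorphism of flat modules to an isomorphism. Hence $\ZZ\cong f^{*}\nu^{-1}(U)$, the classifying map equals $\iota\circ f$, so $\gamma$ is a monomorphism over $U$; combined with the section $\iota$ this yields $\gamma^{-1}(U)\xrightarrow{\ \sim\ }U$.

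\textbf{Main obstacle.} The formal part (commutativity and the construction of $\iota$) is routine; the real content is ruling out any $G$-stable $Z\subset W$ with $Z/\!/G$ a single point $s\in U$ other than $\nu^{-1}(s)$, and doing so in families. Pointwise this is easy: $k[Z]^G=k_s$ forces $\mathfrak m_s k[W]\subset I(Z)$, so $Z\subset\nu^{-1}(s)$, and equality of the Hilbert functions of $k[Z]$ and $k[\nu^{-1}(s)]$ --- the latter being where $s\in U$ enters --- turns the surjection $k[\nu^{-1}(s)]\twoheadrightarrow k[Z]$ into an isomorphism. The delicate step is to run this over an arbitrary test base, i.e.\ to combine the flatness of $\nu$ over $U$ with the isotypic-component-wise Nakayama argument above; that is the point I expect to require the most care.
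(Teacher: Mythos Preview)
The paper does not actually prove this proposition; it is quoted verbatim from \cite[Proposition 3.15]{Br} and used as a black box. So there is no ``paper's own proof'' to compare against.

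That said, your argument is essentially the standard one and is correct. A few remarks: (i) In the statement as printed, ``flat locus of $\pi$'' is a slip for ``flat locus of $\nu$'' (since $\pi$ is flat everywhere by definition of the universal family); you have silently and correctly read it that way. (ii) Your identification $\FF^G=\OO_\HH$ is right, and is exactly how the Hilbert--Chow morphism is constructed in \cite{Br}; the commutativity is then tautological. (iii) For the flat-locus part, the crux is indeed the containment $\ZZ\subset f^*\nu^{-1}(U)$ coming from the factorisation $k[W]\otimes_k\OO_T\to k[W]\otimes_{k[W]^G}\OO_T\twoheadrightarrow\rho_*\OO_\ZZ$, followed by the fibrewise comparison of Hilbert functions. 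Your isotypic-component Nakayama step is fine: each $\FF_{(M)}$ is a finitely generated $\OO_T$-module (since $k[W]_{(M)}$ is finite over $k[W]^G$), both sides are flat, and a surjection of flat finitely generated modules which is a fibrewise isomorphism is an isomorphism. This is precisely the argument sketched in \cite[\S3.4]{Br}.
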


We fix an algebraic subgroup $G' \subset {\Aut}^G(W)$. Then we have:  

\begin{proposition} {\bf (cf. \cite[Proposition 3.10]{Br})} \label{groupaction}
With the above notation, $G'$ acts on $\HHH$ and on $\XXX$ such that all the morphisms of Diagram (\ref{diag1}) are $G'$-equivariant.
\end{proposition}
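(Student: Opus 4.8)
The plan is to deduce everything from the functorial description of $\HH$ together with Yoneda's lemma. Recall that $\Hilb_{h_W}^{G}(W)$ represents the Hilbert functor that assigns to a scheme $S$ the set of flat families $(\pi\colon \ZZ\hookrightarrow S\times W)$ of $G$-stable closed subschemes of $W$ over $S$ with Hilbert function $h_W$. Fix $g\in G'\subseteq\Aut^G(W)$. Since $g\colon W\to W$ is a $G$-equivariant linear automorphism, for every $S$ the automorphism $\mathrm{id}_S\times g$ of $S\times W$ is $G$-equivariant over $S$, and I would check that it transforms a flat family $\ZZ$ into another flat family $g\cdot\ZZ$ with the same Hilbert function: indeed $\mathrm{id}_S\times g$ restricts to a $G$-equivariant $S$-isomorphism $\ZZ\xrightarrow{\sim} g\cdot\ZZ$, which preserves $S$-flatness, $G$-stability, and (taking pushforwards to $S$ and then isotypic components) the Hilbert function. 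This assignment is natural in $S$ and satisfies $(g_1g_2)\cdot\ZZ=g_1\cdot(g_2\cdot\ZZ)$, so by Yoneda it produces a homomorphism $G'(k)\to\Aut_k(\HH)$ whose value at $g$ sends the point $[Z]$ to $[g\cdot Z]$.

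The main point, and the step I expect to require the most care, is upgrading this to an \emph{algebraic} action, i.e. a morphism $a\colon G'\times\HH\to\HH$. For this I would run the argument above over the base $S=G'\times\HH$ applied to one single tautological twisted family: pull the universal family back to $G'\times\XX\to G'\times\HH$, and compose its closed embedding into $(G'\times\HH)\times W$ with the automorphism $(g,x,w)\mapsto(g,x,g\cdot w)$ of $(G'\times\HH)\times W$ (available since $G'$ acts on $W$). The resulting closed subscheme is $G$-stable, flat over $G'\times\HH$ with Hilbert function $h_W$ on each fibre, hence is classified by a unique morphism $a\colon G'\times\HH\to\HH$; on points $a(g,[Z])=[g\cdot Z]$. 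The group-action axioms $a\circ(m\times\mathrm{id}_\HH)=a\circ(\mathrm{id}_{G'}\times a)$ and $a\circ(e\times\mathrm{id}_\HH)=\mathrm{id}_\HH$ then follow from the uniqueness in the universal property, since in each case the two morphisms classify the same flat family (over $G'\times G'\times\HH$, resp. over $\HH$).

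It remains to produce the compatible action on $\XX$ and the equivariances. Let $G'$ act on $\HH\times W$ by $g\cdot(x,w):=(a(g,x),g\cdot w)$. The defining property of $a$ says exactly that the universal family $\XX\subseteq\HH\times W$ is stable under this action, whence a $G'$-action on $\XX$; then $\pi\colon\XX\to\HH$ is $G'$-equivariant by construction of the action on $\HH\times W$, and $q\colon\XX\to W$ is $G'$-equivariant because $g\cdot(x,w)\mapsto g\cdot w$. Since $G'$ commutes with $G$ on $W$, it preserves $k[W]^G$ and hence acts on $W/\!/G=\Spec k[W]^G$, making $\nu$ $G'$-equivariant. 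Finally, from the commutative square of Proposition~\ref{chow} the composite $\gamma\circ\pi=\nu\circ q$ is $G'$-equivariant; as each fibre of $\pi$ has affine algebra isomorphic to the regular representation (so is nonempty), $\pi$ is surjective as well as flat of finite type, hence a faithfully flat, in particular epimorphic, $G'$-equivariant morphism, and therefore $\gamma$ itself is $G'$-equivariant. This gives the $G'$-action on $\HH$ and $\XX$ making every arrow of Diagram~(\ref{diag1}) $G'$-equivariant; everything in this last paragraph is formal once the algebraic action $a$ of the previous step has been constructed, which is the only genuinely substantive point.
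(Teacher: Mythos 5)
Your proof is correct. Note that the paper does not prove this proposition at all—it quotes it from \cite[Proposition 3.10]{Br}—and your argument (checking that a $G$-equivariant automorphism of $W$ transforms flat families into flat families, then obtaining the algebraic action $a\colon G'\times\HH\to\HH$ by twisting the pulled-back universal family via $(g,x,w)\mapsto(g,x,g\cdot w)$ and invoking the universal property, and finally deducing equivariance of $\gamma$ by cancelling the faithfully flat surjection $\pi$) is essentially the standard proof given in that reference, so there is nothing to correct.
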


\begin{remark} 
With the above notation, the morphism ${\OOO}_{\HHH} \rightarrow \FFF:={\pi}_{*}{\OOO}_{\XXX}$ is a morphism of $G'$-modules, and the ${\FFF}_{(M)}$ that appear in the decomposition (\ref{eq1}) are ${\OOO}_{S},G'$-modules.  
\end{remark}

We are now ready to show the Key-Proposition stated in the introduction:

\begin{proof}[Proof of the Key-Proposition]
We use the notation of Diagram (\ref{diag1}). The inclusion $\iota:\ \XXX \subset W \times_{W/\!/G} \HHH$ is $G' \times G$-equivariant and so $\iota$ induces a surjective morphism of ${\OOO}_{\HHH},G' \times G$-modules ${p_2}_{*} {\OOO}_{\HHH {\times}_{W/\!/G} W} \rightarrow \FFF:={\pi}_{*} {\OOO}_{\XXX}$. But ${p_2}_{*} {\OOO}_{\HHH {\times}_{W/\!/G} W}={\OOO}_{\HHH} {\otimes}_{k[W/\!/G]} k[W]$, where we recall that $k[W/\!/G]={k[W]}^{G}$. We can then consider the decomposition into ${\OOO}_{\HHH},G' \times G$-modules 
\begin{equation*}
{\OOO}_{\HHH} {\otimes}_{k[W/\!/G]} k[W]\cong \bigoplus_{M \in \Irr(G)} {\OOO}_{\HHH} {\otimes}_{k[W/\!/G]} {k[W]}_{(M)} {\otimes} M,
\end{equation*}
where the action of $G'$ on ${\OOO}_{\HHH} {\otimes}_{k[W/\!/G]} k[W]$ is induced by the action of $G'$ on $W$, and $G'$ acts trivially on $M$. For each $M \in \ \Irr(G)$, we deduce a surjective morphism of ${\OOO}_{\HHH},G'$-modules 
\begin{equation} \label{m1}
{\OOO}_{\HHH} {\otimes}_{k[W/\!/G]} {k[W]}_{(M)} \twoheadrightarrow \FFF_{(M)} . 
\end{equation}
It follows that the vector space ${k[W]}_{(M)}$ generates ${\FFF}_{(M)}={\Hom}^{G}(M,\FFF)$ as a ${\OOO}_{\HHH},G'$-module. 
The vector space ${k[W]}_{(M)}$ is generally infinite-dimensional, but ${k[W]}_{(M)}$ is a ${k[W]}^{G}$-module of finite type, and thus there exists a finite-dimensional $G'$-module $F_M$ that generates ${k[W]}_{(M)}$ as a ${k[W]}^{G}$-module: 
\begin{equation} \label{m2}
{k[W]}^{G} {\otimes} F_M \twoheadrightarrow {k[W]}_{(M)} .
\end{equation} 
We deduce from (\ref{m1}) and (\ref{m2}) a surjective morphism of ${\OOO}_{\HHH},G'$-modules 
\begin{equation}  \label{pacfib}
{\OOO}_{\HHH} {\otimes} F_M \twoheadrightarrow {\FFF}_{(M)},
\end{equation}
where we recall that ${\FFF}_{(M)}$ is a locally free ${{\OOO}_{\HHH}}$-module of rank $h_W(M)$.
By \cite[Exercise 6.18]{EH}, such a morphism gives a morphism of schemes 
$$\delta_M:\ \HHH \rightarrow \Gra(\dim (F_M)-h_W(M),F_{M}),$$ 
and one may check that $\delta$ is $G'$-equivariant. Finally, we identify
$$\Gra(\dim(F_M)-h_W(M),F_M) \cong \Gra(h_W(M),F_M^*),$$
which completes the proof. 
\end{proof}

\begin{remark} 
The Key-Proposition holds more generally if we consider a Hilbert function $h$ such that $h(V_0)=1$, where $V_0$ denotes the trivial representation.  
\end{remark}

We now obtain a set-theoretic description of the morphism ${\delta}_M$. We recall that, for any $G$-module $M$, we have the canonical isomorphisms
\begin{equation} \label{isocanonique}
\begin{array}{rccccc}
k{[W]}_{(M)}:=&{\Hom}^{G}(M,k[W]) &  \cong  & {(M^{*} \otimes k[W])}^{G} & \cong & \Mor^G(W,M^*) . \\
&(m \mapsto \phi(m) \, f) & &  \phi \otimes f  &  & (w \mapsto f(w) \, \phi)
\end{array}
\end{equation}
Via these isomorphisms, the elements of the $G'$-module $F_M \subset k[W]_{(M)}$ identify with $G'$equivariant morphisms from $W$ to $M^{*}$. The map $\delta_M$ is given by:
\begin{equation*}
\delta_M: \HHH \rightarrow \Gra(\dim (F_M)-h_W(M),F_M),\ [Z] \mapsto \Ker(f_Z), 
\end{equation*}
where 
\begin{equation} \label{descirption_explicite}
\begin{array}{ccccc} 
f_Z & : & F_M & \twoheadrightarrow & \FFF_{M,Z}  \\
& & q & \mapsto & q_{|Z} 
\end{array}
\end{equation}
is the surjective linear map obtained by passing to the fibers in (\ref{pacfib}).

\subsection{}  

We fix a Borel subgroup $B'\subset G'$. We will obtain a series of elementary results that will be useful in the next sections to show that $\HHH$ is a smooth variety in some cases.

\begin{lemma} \label{fixespoints}
We suppose that $W/\!/G$ has a unique closed $G'$-orbit and that this orbit is a point $x$. Then, each $G'$-stable closed subset of $\HHH$ contains at least one fixed-point for the action of the Borel subgroup $B'$. Moreover, if $\HHH$ has a unique fixed-point, then $\HHH$ is connected.
\end{lemma}

\begin{proof}
The Hilbert-Chow morphism $\gamma$ is projective and $G'$-equivariant, so the set-theoretic fiber $\gamma^{-1}(x)$ is a projective $G'$-variety. Let $C$ be a closed subset of $\HHH$, then $\gamma(C)$ is a $G'$-stable closed subset of $W/\!/G$. Hence $x \in \gamma(C)$, that is, $C \cap \gamma^{-1}(x)$ is non-empty. Therefore, the Borel fixed-point Theorem (\cite[Theorem 10.4]{Bo}) yields that $C \cap \gamma^{-1}(x)$ contains at least one fixed-point for the action of $B'$. As a consequence, each connected component of $\HHH$ contains at least one fixed-point, hence the last assertion of the lemma.  
\end{proof}

\begin{lemma} \label{Hlisse4}
We suppose, as in Lemma \ref{fixespoints}, that $W/\!/G$ has a unique closed orbit $x$ for the action of $G'$, and we denote $\HHH^{B'}$ the set of  fixed-points for the Borel subgroup $B'$. Then we have the equivalence
$$ \HHH=\HHHp \text{ is a smooth variety}  \Leftrightarrow  \left\{
    \begin{array}{l}
        \forall [Z] \in \HHH^{B'},\ \dim(T_{[Z]} \HHH)=\dim(\HHHp); \text{ and }\\
        \HHH \text{ is connected.}
    \end{array}
\right.$$
\end{lemma}

\begin{proof}
The direction $\Rightarrow$ is easy. Let us prove the other implication.
We denote $d:=\dim (\HHHp)$. The set $E:=\{[Z] \in \HHH(k) \ \mid \ \dim(T_{[Z]}\HHH)>d\}$ is a $G'$-stable closed subset of $\HHH(k)$. If $E$ is non-empty, then $E$ contains one fixed-point of $B'$ by Lemma \ref{fixespoints}. Let $[Z_0]$ be this fixed-point, then we have $\dim(T_{[Z_0]} \HHH)>d$, which contradicts our assumption. It follows that $E$ is empty, and thus $\HHH$ is a smooth variety. Since $\HHH$ is connected by assumption, $\HHH$ has to be irreducible, and thus $\HHH=\HHHp$.  
\end{proof}

Let $[Z] \in \HHH$ be a closed point, $I_Z$ the ideal of the closed subscheme $Z \subset W$, and $R:=k[W]/I_Z$ the algebra of global sections of the structure sheaf of $Z$.

\begin{proposition} {\bf (cf. \cite[Proposition 3.5]{Br})} \label{isoTangent}
With the above notation, there is a canonical isomorphism
$$T_{[Z]} \HHH \cong \Hom_{R}^{G}(I_Z/I_Z^2,R),$$
where $\Hom_{R}^{G}$ stands for the space of $R$-linear, $G$-equivariant maps.
\end{proposition}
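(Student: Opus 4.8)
The plan is to unwind the functor of points. Recall that $\HH=\Hilb_{h_W}^{G}(W)$ represents the invariant Hilbert functor, so by Yoneda the Zariski tangent space $T_Z\HH$ is identified with the fiber over $Z$ of the map $\HH(\Spec k[\epsilon])\to\HH(\Spec k)$ induced by $k[\epsilon]\to k$, $\epsilon\mapsto 0$, where $k[\epsilon]:=k[\epsilon]/(\epsilon^{2})$ denotes the dual numbers; this fiber carries its standard $k$-vector space structure. Concretely, a tangent vector at $Z$ is a $G$-stable ideal $\widetilde{I}\subset k[W]\otimes_k k[\epsilon]$ with $\widetilde{I}\equiv I_Z\pmod{\epsilon}$ such that $\widetilde{R}:=(k[W]\otimes_k k[\epsilon])/\widetilde{I}$ is flat over $k[\epsilon]$ and each isotypic component $(\widetilde{R})_{(M)}$ is free of rank $h_W(M)$.

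First I would carry out the classical first-order deformation computation for the closed subscheme $Z\subset W$, setting the group aside momentarily. Flatness of $\widetilde{R}$ over the local Artinian ring $k[\epsilon]$ is equivalent to the equality $\ker(\epsilon\colon\widetilde{R}\to\widetilde{R})=\epsilon\widetilde{R}$. Using this, one shows — exactly as for the tangent space of the ordinary Hilbert scheme — that for each $f\in I_Z$ the set of $g\in k[W]$ with $f+\epsilon g\in\widetilde{I}$ is a single coset of $I_Z$, so that $f\mapsto(g\bmod I_Z)$ defines a $k[W]$-linear map $I_Z\to R$; since $R$ is annihilated by $I_Z$, this is the same datum as an element $\varphi\in\Hom_R(I_Z/I_Z^2,R)$. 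Conversely, given $\varphi$, one sets $\widetilde{I}_\varphi:=\{\,f+\epsilon g\mid f\in I_Z,\ g\bmod I_Z=\varphi(\bar f)\,\}$ and checks that this is an ideal of $k[W]\otimes_k k[\epsilon]$ reducing to $I_Z$ modulo $\epsilon$ with $\widetilde{R}_\varphi$ flat over $k[\epsilon]$; the two assignments are mutually inverse and $k$-linear. This identifies the set of first-order deformations of $Z$ inside $W$ with $\Hom_R(I_Z/I_Z^2,R)$.

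It then remains to reinstate the $G$-action and dispose of the multiplicity condition. Since $G$ acts on $k[W]$, hence on $I_Z$, $I_Z^2$ and $R$, and acts trivially on $\epsilon$, the ideal $\widetilde{I}_\varphi$ above is $G$-stable precisely when $\varphi$ is $G$-equivariant; thus the $G$-stable first-order deformations correspond bijectively and $k$-linearly to $\Hom_R^{G}(I_Z/I_Z^2,R)$. Moreover, the multiplicity condition imposes nothing further: $\widetilde{R}_\varphi$ is automatically flat over the connected base $\Spec k[\epsilon]$, so each $(\widetilde{R}_\varphi)_{(M)}$ is locally free, of rank equal to that of the special fiber $R_{(M)}$, and this rank is $h_W(M)$ because $Z\in\HH$. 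Combining these steps yields the canonical isomorphism $T_Z\HH\cong\Hom_R^{G}(I_Z/I_Z^2,R)$, and naturality of the construction in $Z$ makes it canonical.

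The step I expect to demand the most care is the flatness analysis over the dual numbers: verifying that $\widetilde{I}_\varphi$ is genuinely an ideal with the correct special fiber and that $\widetilde{R}_\varphi$ is flat, extracting $\varphi$ back from an arbitrary flat deformation, and confirming that the two constructions are mutually inverse and equivariant. Everything else is formal bookkeeping; in particular, no hypothesis beyond the rationality of the $G$-action enters this computation, the reductivity of $G$ and the characteristic-zero assumption being relevant only elsewhere (notably to the very construction of $\HH$).
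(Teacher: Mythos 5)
Your argument is correct and is essentially the proof of the result the paper only quotes (\cite[Proposition 3.5]{Br}): one identifies $T_Z\HH$ with the fiber of $\HH(\Spec k[\epsilon])\to\HH(\Spec k)$ over $Z$, matches $G$-stable ideals $\widetilde{I}\subset k[W]\otimes_k k[\epsilon]$ flat over the dual numbers with $G$-equivariant elements of $\Hom_R(I_Z/I_Z^2,R)$ by the classical first-order deformation computation, and observes that the multiplicity condition is automatic over the connected base $\Spec k[\epsilon]$. One small caveat: discharging the multiplicity condition (each isotypic component of $\widetilde{R}_\varphi$ is a $k[\epsilon]$-direct summand, hence flat, hence free of rank $h_W(M)$) does use the linear reductivity of $G$ through the isotypic decomposition, so your closing aside that only rationality of the action enters this computation is a slight overstatement, though harmless in the paper's setting.
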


Next, let $N_1$ be a $G$-submodule of $k[W]$ contained in $I_Z$ such that the natural morphism of $R,G$-modules $\delta:\ R \otimes N_1 \rightarrow I_Z/I_Z^2$ is surjective; and let $N_2$ be a $G$-submodule of $R \otimes N_1$ such that we have the exact sequence of $R,G$-modules 
\begin{equation} \label{complexeCornFlakes}
\begin{array}{cccccc}
R \otimes N_2  &  \stackrel{\rho}{\longrightarrow} & R \otimes N_1 & \stackrel{\delta}{\longrightarrow} & I_Z/I_Z^2 & \rightarrow 0, \\
  &&f \otimes 1& \mapsto & \overline{f} & 
\end{array}
\end{equation}
where we denote $\overline{f}$ the image of $f \in I_Z$ in $I_Z/I_Z^2$.

Applying the left exact contravariant functor ${\Hom}_R(\,.\,,R)$ to the exact sequence (\ref{complexeCornFlakes}) and taking the $G$-invariants, we get the exact sequence of finite-dimensional vector spaces
\begin{equation} \label{complexeCornFlakes2}
  \xymatrix{
    0 \ar[r] &{\Hom}_R^{G}(I_Z/I_Z^2,R) \ar[r]^{\delta^*} & {\Hom}_R^{G}(R {\otimes} N_1,R) \ar[r]^{\rho^*} \ar[d]^{\cong} & {\Hom}_R^{G}(R \otimes N_2,R) \ar[d]^{\cong} \\
        &   &  \Hom^G(N_1,R) & \Hom^G(N_2,R)
  }
\end{equation}

Therefore, we have $T_{[Z]} \HHH \cong \Ima(\delta^*) = \Ker(\rho^*)$. Moreover, if the ideal $I_Z$ is $B'$-stable, then we can choose $N_1$ and $N_2$ as $B' \times G$-modules such that all the morphisms of the exact sequence (\ref{complexeCornFlakes}) are morphisms of $R,B' \times G$-modules and all the morphisms of the exact sequence (\ref{complexeCornFlakes2}) are morphisms of $B'$-modules. 

\begin{lemma} \label{InegTang}
With the above notation, suppose that $R \cong k[G]$ as $G$-modules. Then, $\dim({\Hom}_R^G(I_Z/I_Z^2,R)) = \dim(N_1)- \rank(\rho^*)$. In particular, if $\delta$ is an isomorphism, then $\dim({\Hom}_R^G(I_Z/I_Z^2,R)) = \dim(N_1)$. 
\end{lemma}

\begin{proof}
We have
\begin{align*}
\dim({\Hom}_R^{G}(I_Z/I_Z^2,R)) &= \dim(\Hom_R^{G}(R \otimes N_1,R))- \rank(\rho^*)\\  
                            &= \dim(\Hom^{G}(N_1,R))- \rank(\rho^*)\\  
                            &= \dim(\Mor^G(G,N_1^*))- \rank(\rho^*) \text{ since } R \cong k[G],\\  
                            &= \dim(N_1)- \rank(\rho^*).
\end{align*} 
\end{proof}

\section{Case of $SL_n$ acting on $(k^n)^{\oplus n'}$}  \label{casSln}

\subsection{}
We denote $G:=SL(V)$, $G':=GL(V')$, and $W:=\Hom(V',V)$. Consider the action of $G' \times G$ on $W$ given by:
\begin{equation*}  
\forall w \in W,\ \forall (g',g) \in G' \times G,\ (g',g).w:=g  \circ w \circ g'^{-1} .
\end{equation*}

We recall that $W/\!/G$ was described in the introduction. We also recall that $C(\Gra(n,V'^*))$ denotes the affine cone over $\Gra(n,V'^*)$, and that $C_0$ denotes the blow-up of $C(\Gra(n,V'^*))$ at 0. The aim of this section is to show the main Theorem in Case 1. Specifically, we will show:

\begin{theorem} \label{SLLn}
If $n=1$ or $n'\leq n$, then $\HHH \cong W/\!/G$ and the Hilbert-Chow morphism $\gamma$ is an isomorphism.\\
If $n'>n>1$, then $\HHH \cong C_0$ and $\gamma$ is the blow-up of $C(\Gra(n,V'^*))$ at 0.\\
In all cases, $\HHH$ is  a smooth variety and thus, when $W/\!/G$ is singular, $\gamma$ is a desingularization.
\end{theorem}

The cases $n=1$ or $n'\leq n$ are easy and are treated by Corollary \ref{cas_faciles_SLn}. The case $n'>n>1$ is handled by Proposition \ref{iso_final2SLn}.

\subsection{Generic fiber and flat locus of the quotient morphism}

One can check that the variety $W/\!/G$ is smooth except when $1<n<n'-1$ in which case $W/\!/G$ has a unique singularity at 0. Moreover, $W/\!/G$ is always normal (\cite[\S 3.2, Th\'{e}or\`{e}me 2]{SB}) and Gorenstein (\cite[\S 4.4, Th\'{e}or\`{e}me 4]{SB}) because $G$ is semisimple. When $n' \geq n$, the variety $W/\!/G$ is the union of two $G'$-orbits: the origin and its complement, denoted $U$.

\begin{proposition} \label{ouvert_platitudeSLn}
Let $U$ be the open orbit defined above.\\
1) If $n' \geq n$, then the fiber of $\nu$ over a point of $U$ is isomorphic to $G$.\\
2) If $n'> n>1$, then $U$ is the flat locus of $\nu$; otherwise, $\nu$ is flat. 
\end{proposition}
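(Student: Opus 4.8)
The plan is to analyze the fibers of the quotient morphism $\nu:\ W=\Hom(V',V) \to W/\!/G$ explicitly, using the description of $\nu$ via exterior powers together with the homogeneity under the $G'=GL(V')$-action, and then to deduce flatness from a dimension count.

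\textbf{Step 1: the fiber over a point of the open orbit $U$.} Assume $n'\geq n$. Since $\nu$ is $G'$-equivariant and $U$ is a single $G'$-orbit, all fibers of $\nu$ over $U$ are isomorphic (abstractly as schemes), so it suffices to compute one of them. I would pick a convenient point: writing $w$ in terms of its rows $L_1,\dots,L_n \in V'^*$, the point $\nu(w)=L_1\wedge\cdots\wedge L_n$ is nonzero exactly when the $L_i$ are linearly independent, i.e.\ when $w:\ V'\to V$ is surjective. Fix such a $w_0$ (say, projection onto the first $n$ coordinates). Then for any $w$ with $\nu(w)=\nu(w_0)$, the rows of $w$ span the same $n$-dimensional subspace $E\subset V'^*$ as the rows of $w_0$, and the induced isomorphisms $V'/E^\perp \to V$ differ by an element of $SL(V)$ (the condition that the wedge of the rows is unchanged pins down the determinant). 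Hence the fiber is a single $G$-orbit, and its stabilizer is trivial since $G=SL(V)$ acts freely on the set of isomorphisms $V'/E^\perp \to V$; so the fiber is isomorphic to $G$ (one should remark that this is an identification of varieties, and that scheme-theoretically the fiber is reduced, which follows since $\nu$ is dominant onto a normal variety and the generic fiber is smooth of the expected dimension, or simply because $G$ is smooth and $\nu$ is flat over $U$ — but to avoid circularity I would argue reducedness directly from the explicit equations, or cite that $\Hom^G(M,k[\text{fiber}])$ has the expected ranks). This proves part 1).

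\textbf{Step 2: flatness over $U$.} From part 1), every fiber of $\nu$ over $U$ has dimension $\dim G = n^2-1$. The total space $W$ is smooth (it is a vector space) and $W/\!/G$ is smooth along $U$ (checked in the text: the only possible singularity of $W/\!/G$ is at $0$). Since $\dim W = nn'$ and $\dim(W/\!/G) = nn' - (n^2-1)$ when $n'\geq n$ (as $\dim U = \dim C(\Gr(n,V'^*))$ when $n'>n$, and $\dim\Lambda^n(V'^*)$ when $n'=n$), the fiber dimension over $U$ equals $\dim W - \dim(W/\!/G)$, the expected value. A morphism from a Cohen--Macaulay (here smooth) scheme to a regular scheme whose fibers all have the expected dimension is flat (the ``miracle flatness'' criterion, \cite[Ch. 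III, Exercise 10.9]{Hartshorne} or EGA); hence $\nu$ is flat over $U$.

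\textbf{Step 3: the fiber over $0$ and failure of flatness.} It remains to show that when $n'>n>1$ the morphism $\nu$ is \emph{not} flat at $0$, while when $n=1$ or $n'=n$ it is flat everywhere. For $n=1$ the map $\nu:\ V^{\oplus n'}\to V'^*$ is linear and surjective, hence flat. For $n'=n$, $W/\!/G\cong\Aff$ with $\nu(w)=\det(w)$; the non-vanishing of $\det$ cuts out a principal divisor on the smooth variety $W$, so the only fiber to check is $\{\det=0\}$, which has pure codimension $1$ (the determinant is irreducible), i.e.\ the expected dimension, and again miracle flatness gives flatness. For $n'>n>1$: the fiber $\nu^{-1}(0)$ is the variety of $w$ with $L_1\wedge\cdots\wedge L_n=0$, i.e.\ the matrices $V'\to V$ of rank $<n$, which has dimension $(n-1)n' + (n-1)$ — strictly larger than the expected dimension $nn' - \dim C(\Gr(n,V'^*)) = nn' - (n(n'-n)+ (n^2-1)-? )$; more cleanly, comparing with the fiber dimension $n^2-1$ over $U$, the fiber over $0$ has dimension $(n-1)(n'+1)$, and a short arithmetic check shows $(n-1)(n'+1) > n^2-1$ precisely when $n'>n$, so for $n'>n>1$ the fiber over $0$ jumps and $\nu$ cannot be flat at $0$. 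Combined with Steps 1--2 this shows $U$ is exactly the flat locus, proving part 2).

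\textbf{Main obstacle.} The genuinely delicate point is the scheme-theoretic (as opposed to set-theoretic) statements: that the fiber over $U$ is reduced and isomorphic to $G$ as a scheme, and that the dimension of $\nu^{-1}(0)$ is as claimed with the fiber taken with its natural scheme structure. The dimension-jump argument for non-flatness only needs the underlying reduced fiber dimension (that suffices, since flatness over a reduced base forces locally constant fiber dimension), so the non-flatness half is robust; the flatness half rests entirely on miracle flatness, for which one needs $W/\!/G$ regular along $U$ and the equidimensionality of fibers, both of which are in hand. So I expect Step 1's reducedness claim to be the part requiring the most care, and I would handle it either by exhibiting the fiber's coordinate ring explicitly or by the remark that a flat family with reduced geometric generic fibre over an integral base has reduced nearby fibres.
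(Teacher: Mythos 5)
Your proposal is correct and follows essentially the same route as the paper: compare the dimension $(n-1)(n'+1)$ of $\nu^{-1}(0)$ with the generic fiber dimension $n^2-1$ to rule out flatness at $0$ when $n'>n>1$, and use the miracle-flatness criterion (\cite[Exercice 10.9]{Ha}) in the remaining cases. The only cosmetic difference is that over the open orbit $U$ you invoke miracle flatness directly, whereas the paper combines generic flatness with $G'$-homogeneity of $U$; and you spell out part 1) and the reducedness of the generic fiber, which the paper leaves as "well-known and easily checked."
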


\begin{proof}
The first assertion is a well-known fact and is easily checked. Let us show the second assertion. If $n'<n$, then $\nu$ is trivial, hence $\nu$ is flat. If $n'=n$, then $W/\!/G \cong \Aff$ and $\nu$ is the determinant, which is flat by \cite[Exercise 10.9]{Ha}.\\
We now suppose that $n'>n$. We know that $\nu$ is flat over a non-empty open subset of $W/\!/G$ (\cite[Theorem 14.4]{Ei}), hence $\nu$ is flat over $U$ by $G'$-homogeneity. Then, one can check that
$\dim(\nu^{-1}(0))=\dim(\{w \in W\ |\ \rank(w) \leq n-1 \})=(n'+1)(n-1).$
By 1), the dimension of the fiber of $\nu$ over a point of $U$ is $n^2-1$. The fibers of a flat morphism have all the same dimension; hence, if $n>1$, $U$ is the flat locus of $\nu$. Otherwise, $W/\!/G$ is smooth and all the fibers of $\nu$ have the same dimension, and thus $\nu$ is flat by \cite[Exercise 10.9]{Ha}. 
\end{proof}

\begin{corollary} \label{fctHilbSLn}
If $n' \geq n$, the Hilbert function $h_W$ of the general fibers of $\nu$ is given by:
$$\forall M \in \Irr(G),\ h_W(M)=\dim(M).$$
\end{corollary}

We deduce from Propositions \ref{chow} and \ref{ouvert_platitudeSLn}:

\begin{corollary} \label{cas_faciles_SLn}
The Hilbert-Chow morphism is an isomorphism if and only if:
\begin{itemize} \renewcommand{\labelitemi}{$\bullet$}
\item $n'<n$, and then $\HHH$ is a reduced point; or
\item $n'=n$, then $\HHH \cong \Aff$ and $\det:\ W \rightarrow \Aff$ is the universal family; or
\item $n'>n=1$, then $\HHH \cong V'^*$ and $Id:\ V'^* \rightarrow V'^*$ is the universal family.
\end{itemize}
\end{corollary}

It remains to consider the case $n'>n>1$. To do this, we will use the reduction principle for $SL_n$ that will allow us to reduce to $n'=n$.

\subsection{Reduction principle for $SL_n$}  \label{redSLn}

From now on, we suppose that $n' \geq n$. We are going to show in Case 1 the reduction principle stated in the introduction. Specifically, we will prove: 

\begin{proposition} \label{reduction1}
We suppose that $n' \geq n$ and let $P$ be the parabolic subgroup of $G'=GL(V')$ that preserves a $n$-dimensional subspace of $V'^*$. Then there is a $G'$-equivariant isomorphism 
$$\begin{array}{ccccc}
\psi & : & G' {\times}^{P} \Aff & \cong & \HHH,  \\
& & (g',x)P & \mapsto & g'.x 
\end{array}$$
where we denote $G' {\times}^{P} \Aff:=(G' \times \Aff)/\equiv$ with $(g',x) \equiv (g'p^{-1},\det(p^{-1})x).$ 
\end{proposition}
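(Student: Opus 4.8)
The plan is to exhibit the isomorphism $\psi$ concretely and verify it with the tools already assembled in Sections \ref{generalit�esHilbert} and \ref{casSln}. First I would fix a point $E\in\Gr(n,V'^*)$, let $P\subset G'=GL(V')$ be its stabilizer, and let $W':=\Hom(V'/E^\perp,V)$; here $E^\perp\subset V'$ is the annihilator of $E$, so $V'/E^\perp$ is $n$-dimensional and $W'\cong\Hom(k^n,k^n)$ carries the $SL(V)$-action of Case 1 with $n'=n$. By Corollary \ref{cas_faciles_SLn} (the case $n'=n$), $\HH':=\Hilb_{h_{W'}}^G(W')\cong\Aff$, the universal family being the determinant map $\det:W'\to\Aff$. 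The inclusion $V'/E^\perp\hookleftarrow$ dualizes to $E=(V'/E^\perp)^*\hookrightarrow V'^*$, i.e.\ to a surjection $V'\twoheadrightarrow V'/E^\perp$, hence a $G$-equivariant closed immersion $W'\hookrightarrow W$, and thus a closed immersion of invariant Hilbert schemes $\HH'\hookrightarrow\HH$ compatible with the Hilbert functions (both equal $M\mapsto\dim M$ by Corollary \ref{fctHilbSLn}). This gives a $P$-equivariant map $\Aff\cong\HH'\hookrightarrow\HH$, and since $G'$ acts on $\HH$ (Proposition \ref{groupaction}) and $G'$ acts transitively on $\Gr(n,V'^*)$ with stabilizer $P$, the map $(g',x)\mapsto g'.x$ descends to a $G'$-equivariant morphism $\psi:G'\times^P\Aff\to\HH$. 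One checks the relation $(g',x)\equiv(g'p^{-1},\det(p^{-1})x)$ is exactly the one induced by how $P$ scales the determinant on $W'$ (the character of $P$ acting on $\Lambda^n(V'/E^\perp)$).

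Next I would prove $\psi$ is an isomorphism. The source $G'\times^P\Aff$ is a line bundle over $G'/P=\Gr(n,V'^*)$, hence smooth and irreducible of dimension $\dim\Gr(n,V'^*)+1=n(n'-n)+1$; note this matches $\dim C_0=\dim C(\Gr(n,V'^*))$, which is consistent with the eventual identification $\HH\cong C_0$. The strategy is: (i) $\psi$ is injective on closed points — this reduces, by $G'$-equivariance, to checking that the only $g'\in G'$ with $g'.\HH'$ meeting $\HH'$ in a common non-origin point lie in $P$, which follows because a closed subscheme $Z\subset W$ with $Z/\!/G\neq 0$ lying in $W'$ "remembers" the subspace $E\subset V'^*$ (the row span), an argument using the First Fundamental Theorem description of $\nu$; (ii) $\psi$ is dominant — the main component $\HHp$ has dimension $\dim W/\!/G=n(n'-n)+1$ (the dimension of $C(\Gr(n,V'^*))$), equal to $\dim(G'\times^P\Aff)$, and $\psi$ hits the generic point of $\HHp$ since a generic fiber of $\nu$ sits inside a translate of $W'$; (iii) conclude $\psi$ is an isomorphism onto $\HHp$, and then that $\HH=\HHp$. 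For (iii) I would invoke Lemma \ref{Hlisse4}: $W/\!/G$ has a unique closed $G'$-orbit (the origin, by the discussion preceding Proposition \ref{ouvert_platitudeSLn}), so it suffices to show $\HH$ is connected and that $\dim T_Z\HH=\dim\HHp$ at every $B'$-fixed point $Z$. Since $\mathrm{Im}(\psi)$ is an irreducible closed subset containing a $B'$-fixed point (it is $G'$-stable and projective over the origin in the relevant sense), Lemma \ref{fixespoints} will force $\HH$ to be connected with a unique $B'$-fixed point $Z_0\in\mathrm{Im}(\psi)$, and the tangent space computation at $Z_0$ — carried out via Proposition \ref{isoTangent} and Lemma \ref{InegTang}, using $R=k[Z_0]\cong k[G]$ — gives $\dim T_{Z_0}\HH=\dim N=n(n'-n)+1$ for an appropriate choice of generating module $N\subset I_{Z_0}$.

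The main obstacle I expect is step (ii)/(iii): pinning down the $B'$-fixed point $Z_0$ explicitly and computing $\dim T_{Z_0}\HH$. One must write down the $B'$-stable ideal $I_{Z_0}\subset k[W]$ corresponding to the point $(e,0)P$ of $G'\times^P\Aff$ — concretely the subscheme where the last $n'-n$ "columns" of $w\in\Hom(V',V)$ are zero, i.e.\ $I_{Z_0}$ generated by the appropriate coordinate functions together with the relation cutting $W'\cong\Hom(k^n,k^n)$ down to $SL(V)\subset\Hom(k^n,k^n)$ — identify a minimal $B'\times G$-stable generating module $N$ and the module of relations $\RRR$ in the exact sequence \eqref{complexeCornFlakes}, and check $\rho^*=0$ (equivalently, that $\delta$ is "as injective as possible" on invariants) so that Lemma \ref{InegTang} yields $\dim T_{Z_0}\HH=\dim N$. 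Matching this against $\dim\HHp=n(n'-n)+1$ is the crux; everything else (the $G'$-equivariance of $\psi$, the descent to $G'\times^P\Aff$, the identification of the equivalence relation, connectedness via Lemma \ref{fixespoints}) is formal given the results already in the excerpt.
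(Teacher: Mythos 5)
Your construction of the morphism $\psi:\ G'\times^P \Aff \to \HH$ (embed $W'=\Hom(V'/E^\perp,V)$ into $W$, identify $\HH'\cong\Aff$ by Corollary \ref{cas_faciles_SLn}, and descend $(g',x)\mapsto g'.x$) is fine, but the heart of the proposition --- that $\psi$ is an isomorphism, i.e.\ that \emph{every} point of $\HH$ lies in a unique $G'$-translate of $\HH'$ --- is exactly what your plan leaves open, and it is here that the paper uses a different and much more economical idea. The paper first proves Lemma \ref{exi1SLn} by classical invariant theory (polarization, restitution, First Fundamental Theorem): $\Hom^G(V^*,W^*)\cong V'$ generates $k[W]_{(V^*)}$ as a $k[W]^G$-module. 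The Key-Proposition applied to $M=V^*$ then produces a $G'$-equivariant morphism $\rho:\ \HH\to\Gr(n,V'^*)\cong G'/P$, and equivariance alone gives $\HH\cong G'\times^P F$ with $F$ the scheme-theoretic fiber of $\rho$ over $eP$. Finally a purely functorial computation (Lemma \ref{fibrehil}: $\rho(\ZZ_s)=E$ iff $p_{n+1},\dots,p_{n'}$ vanish on $\ZZ_s$ iff $\ZZ_s\subset W'$) identifies $F\cong\Hilb_{h_{W'}}^G(W')\cong\Aff$. No smoothness, connectedness or tangent-space information about $\HH$ is needed, and the identification is scheme-theoretic, not just on closed points; indeed the smoothness of $\HH$ in Case 1 is a \emph{consequence} of this bundle structure.

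Your substitute route --- prove $\HH=\HHp$ is smooth, connected, of dimension $n(n'-n)+1$ via Lemmas \ref{fixespoints}, \ref{Hlisse4} and \ref{InegTang}, then deduce that the injective dominant map $\psi$ is an isomorphism --- has two genuine gaps. First, Lemma \ref{Hlisse4} requires the tangent-space bound at \emph{every} $B'$-fixed point, so you must prove that $Z_0$ is the unique $B'$-fixed point of $\HH$ (the analogue of Theorem \ref{pointfixeborel}, which in the $GL_2$ case takes a full analysis of $k[W]/J$ via the Kraft--Schwarz description) and then actually compute $\dim T_{Z_0}\HH=n(n'-n)+1$, exhibiting a $B'\times G$-stable generating module $N$ and controlling $\rho^*$; you explicitly defer both steps, and they are the substance of the argument, not routine verifications --- nothing in the paper supplies them for Case 1 because the paper never needs them. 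Second, even granting all of that, injective plus dominant between smooth irreducible varieties of equal dimension does not give surjectivity; you would still need an additional argument (properness of $\psi$, or: the image is open by Zariski's Main Theorem, its complement is closed and $G'$-stable, hence by Lemma \ref{fixespoints} would contain a $B'$-fixed point, contradicting $Z_0\in\Im(\psi)$), which the proposal does not supply. As written, then, the plan does not prove the proposition; the missing idea is precisely the $G'$-equivariant projection $\rho$ to $\Gr(n,V'^*)$ furnished by Lemma \ref{exi1SLn} and the Key-Proposition, which converts the statement into the trivial case $n'=n$ without any smoothness input.
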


In the next lemma, we use classical invariant theory to determine a finite-dimensional $G'$-submodule of $k[W]_{(V^*)}:=\Hom^G(V^*,k[W])$ that generates this $k[W]^G$-module in order to apply next the Key-Proposition to $M=V^*$. The reference that we use for classical invariant theory is \cite{Pr}. 

\begin{lemma} \label{exi1SLn}
With the above notation, the $k[W]^G$-module $k[W]_{(V^*)}$ is generated by $\Hom^G(V^*,W^*)$.
\end{lemma}

\begin{proof}
We want to show that the natural map
$${k[W]}^G \otimes \Hom^G(V^*,W^*) \rightarrow  k[W]_{(V^*)}$$ 
of $k[W]^G,G'$-modules is surjective. We identify $k[W]$ with $S(W^*)$, the symmetric algebra of $W$. Then
\begin{align*}
  k[W]_{(V^*)} &\cong (S(W^*) \otimes V)^G \\
               &\cong \bigoplus_{p \geq 0} (S^p({(V^*)}^{n'})  \otimes V)^G\\
               &\cong \bigoplus_{p \geq 0}  \left( \bigoplus_{p_1+\ldots+p_{n'}=p}  (S^{p_1}(V^*) \otimes \cdots \otimes S^{p_{n'}}(V^*) \otimes V)^G \right ).
\end{align*} 
We fix $(p_1,\ldots,p_{n'}) \in \NNe^{n'}$. Then
$$(S^{p_1}(V^*) \otimes \cdots \otimes S^{p_{n'}}(V^*)  \otimes V)^G \cong (k[ V \oplus \cdots\oplus V \oplus V^*]_{(p_1,\ldots,p_{n'},1)})^G$$ 
is the vector space of multihomogeneous invariants of multidegree $(p_1,\ldots,p_{n'},1)$. \\
We apply the \textit{polarization operator} $\PPP$ defined in \cite[\S 3.2.1]{Pr}:
$$\PPP:\ (k[ V \oplus \cdots \oplus V \oplus V^{*}]_{(p_1,\ldots,p_{n'},1)})^G \longrightarrow (k[ {V}^{\oplus p_1} \oplus \cdots \oplus {V}^{\oplus p_{n'}} \oplus V^{*}]_{\multi})^G .$$ 
So we have to study the space of multilinear invariants
$$(k[ {V}^{\oplus p_1} \oplus \cdots \oplus {V}^{\oplus p_{n'}} \oplus V^{*}]_{\multi})^G \cong ({{V}^{*}}^{\otimes p_1} \otimes \cdots \otimes {{V}^{*}}^{\otimes p_{n'}} \otimes V)^G .$$
For each $1 \leq i \leq p$, where $p:=p_1+\ldots+p_{n'}$, we denote $\phi_i$ the vector corresponding to the $i$-th copy of $V^*$, and $v \in V$ the vector corresponding to the unique copy of $V$. According to the First Fundamental Theorem for $SL_n$ (\cite[\S 11.1.2]{Pr}), the vector space 
$({{V}^{*}}^{\otimes p_1} \otimes \cdots \otimes {{V}^{*}}^{\otimes p_{n'}} \otimes V)^G$ is generated by $({{V}^{*}}^{\otimes p_1} \otimes \cdots \otimes {{V}^{*}}^{\otimes p_{n'}})^G$ and by the bilinear forms $(\phi_j,v) \mapsto \phi_j(v)$, for $j=1,\ldots,p$.
Then, by \cite[\S 3.2.2, Theorem]{Pr}, the vector space 
$$(S^{p_1}(V^{*}) \otimes \cdots \otimes S^{p_{n'}}(V^{*}) \otimes V )^G$$
is generated by
$$\left\{  \RRR f,\ f \in ({V^*}^{\otimes p_1} \otimes \cdots \otimes {V^*}^{\otimes p_{n'}} \otimes V)^G \right\},$$ 
where we denote $\RRR$ the \textit{restitution operator} defined in \cite[\S 3.2.2]{Pr}. 
For $j=1,\ldots,n'$, we now denote $\phi_j \in V^*$ the vector corresponding to the $j$-th copy of $V^*$, and $v \in V$ as before. We have
$$\RRR f (\phi_1,\ldots,\phi_{n'},v)= \sum_{j=1}^{n'} \Psi_j (\phi_1,\ldots,\phi_{n'}) \times  \phi_j(v).$$ 
For all $1 \leq j \leq n'$, we have $\Psi_j \in k[W]^G$ and $\phi_j(v) \in ({W^* \otimes V })^G \cong \Hom^G(V^*,W^*)$, hence the result. 
\end{proof}

By (\ref{isocanonique}), the $G'$-module $V' \cong \Hom^G(V^*,W^*)$ identifies with the linear maps of $\Mor^G(W,V)$, that is, with $\Hom^G(W,V)$. In fact, this identification is given simply by sending a morphism to its transpose. Denoting $p_1, \ldots ,p_{n'}$ the $n'$ linear projections from $W \cong V'^* \otimes V$ to $V$, we will sometimes identify the $G'$-module $V'$ with $\left \langle p_1, \ldots ,p_{n'} \right \rangle = \Hom^G(W,V)$. With this notation, Lemma \ref{exi1SLn} admits the following reformulation: any $G$-equivariant morphism from $W$ to $V^*$ can be written as a linear combination of the form $\sum_i f_i p_i$, for some $f_i \in k[W]^G$.

By Corollary \ref{fctHilbSLn}, we have $h_W(V^*)=\dim(V^*)=n$, and thus the Key-Proposition applied for $M=V^*$ gives a $G'$-equivariant morphism $\HHH \rightarrow \Gra(n,V'^*)$. The Grassmannian $\Gra(n,V'^*)$ is a homogeneous space for the natural action of $G'$. Recalling that $n' \geq n$ by assumption, we denote $E:=\left \langle p_1,\ldots,p_n \right \rangle$, which is a $n$-dimensional subspace of $V'^*$, and $P \subset G'$ the stabilizer of $E$, which is a parabolic subgroup. Identifying $\Gra(n,V'^*) \cong G'/P$, we obtain a $G'$-equivariant morphism
\begin{equation}  \label{rro}
 \rho: \HHH \rightarrow G'/P,
\end{equation}   
whence a $G'$-equivariant isomorphism
\begin{equation*}
\HHH \cong G' \times^P F,
\end{equation*}
where $F$ is the scheme-theoretic fiber of $\rho$ over $eP$. Therefore, to show Proposition \ref{reduction1}, we just have to show: 

\begin{lemma} \label{fibrehil}
With the above notation, there is a $P$-equivariant isomorphism
$$ F \cong \Aff, $$
where the action of $P$ on $\Aff$ is given by $p.x:=\det(p^{-1})x$.
\end{lemma}

\begin{proof}
By definition of $F$, for any scheme $S$, we have 
 $$ \Mor(S,F) = \left\{ 
\begin{array}{c} 
\xymatrix{
\ZZZ \ar@{.>}_{\pi}[dr]  \ar@{^{(}->}[r] &   S \times W \ar@{->}[d]^{p_1}\\
                     &  S } \end{array}
\middle|
\begin{array}{l} {\ZZZ} \ \text{is a $G$-stable closed subscheme};  \\
{\pi} \ \text{is a flat morphism};\\
{\pi}_{*} {\OOO}_{{\ZZZ}} = \bigoplus_{M \in \Irr{G}} {\FFF}_{M} {\otimes} M;\\
 {\FFF}_{M} \ \text{is a loc. free ${\OOO}_S$-module of rank $h_W(M)$}; \\
\forall s \in S(k),\ \rho({{\ZZZ}}_s)=E. \end{array} 
\right\} . $$
But
\begin{align*}
\rho({{\ZZZ}}_s)=E &\Leftrightarrow  {{p}_{n+1}}_{|{{\ZZZ}}_s}= \cdots ={{p}_{n'}}_{|{{\ZZZ}}_s}=0\\
                          &\Leftrightarrow  {{\ZZZ}}_s \subset \Hom(V'/E^{\perp},V)=:W'.
\end{align*} 
Hence 
\begin{align*} \Mor(S,F) &= \left\{ 
\begin{array}{c} 
\xymatrix{
\ZZZ \ar@{.>}_{\pi}[dr]  \ar@{^{(}->}[r] &   S \times W' \ar@{->}[d]^{p_1}\\
                     &  S } \end{array}
\middle|
\begin{array}{l} {\ZZZ} \  \text{ is a $G$-stable closed subscheme};  \\
{\pi} \ \text{is a flat morphism};\\
{\pi}_{*} {\OOO}_{{\ZZZ}} = \bigoplus_{M \in \Irr{G}} {\FFF}_{M} {\otimes} M;\\
 {\FFF}_{M} \ \text{is a loc. free ${\OOO}_S$-module of rank $h_{W}(M)$.}   \end{array} 
\right\} \\
&={Hilb}_{h_{W}}^{{G}} (W')(S) \\
&\cong \Mor(S,{\Hilb}_{h_{W}}^{{G}} (W')),  
\end{align*} 
where the last isomorphism comes from the definition of the invariant Hilbert scheme in terms of a representable functor.\\
By Corollary \ref{fctHilbSLn}, we have $h_W=h_{W'}$. It follows that 
$$F \cong \Hilb_{h_{W'}}^{G}(W') \cong \Aff$$ as $P$-schemes, where the last isomorphism follows from Corollary \ref{cas_faciles_SLn}. 
\end{proof}

\begin{remark}
We have $G'$-equivariant morphisms 
\begin{equation*}
\XXX \stackrel{\pi}{\longrightarrow} \HHH \stackrel{\rho}{\longrightarrow} G'/P,
\end{equation*}
where $\pi:\ \XXX \rightarrow \HHH$ is the universal family. Using the same arguments than those used to show Proposition \ref{reduction1}, one can show that there exists a $G'$-equivariant isomorphism
\begin{equation} \label{uunnii}
\XXX \cong G' {\times}^{P} \Hom(V'/E^{\perp},V),
\end{equation} 
where the action of $P$ on $\Hom(V'/E^{\perp},V)$ is the natural one. In other words, if we denote $T$ resp. $\underline{V'}$ and $\underline{V}$, the tautological bundle resp. the trivial bundles with respective fibers $V'$ and $V$, over the Grassmannian $\Gra(n,V'^*)$, then $\XXX$ identifies with the total space of the vector bundle $\Hom(\underline{V'}/T^{\perp},\underline{V}) \cong T \otimes \underline{V}.$ 
\end{remark}

\subsection{The case $1<n<n'$}

From now on, we suppose that $1<n<n'$. In the preceding section, we showed that $\HHH$ is the total space of a homogeneous line bundle over $\Gra(n,V'^*)$. In this section, we will show that this line bundle is the tautological one, and thus that the Hilbert-Chow morphism $\gamma$ is the blow-up of $W/\!/G=C(\Gra(n,V'^*))$ at 0.

\begin{lemma} \label{morphisme_dans_YSLn}
Let $\rho$ be the morphism defined by (\ref{rro}) and $C_0$ the smooth variety obtained by blowing-up the affine cone $C(\Gra(n,V'^*))$ at 0. Then the morphism $\gamma \times \rho$ sends $\HHH$ into $C_0$.
\end{lemma}

\begin{proof}
By Propositions \ref{chow} and \ref{ouvert_platitudeSLn}, the restriction of $\gamma$ to $\gamma^{-1}(U)$ is an isomorphism. We fix $y_0 \in U$ and we denote $Q:=\Stab_{G'}(y_0)$ and $[Z_0]$ the unique point of $\HHH$ such that $\gamma([Z_0])=y_0$. As $\gamma$ is $G'$-equivariant, $[Z_0]$ has to be $Q$-stable. In addition, $\rho$ is also $G$-equivariant, hence $\rho([Z_0])$ is a $Q$-stable line in $W/\!/G$. But one may check that the unique $Q$-stable line of $W/\!/G$ is the one generated by $y_0$, hence $\gamma([Z_0]) \in \rho([Z_0])$. It follows that $(\gamma \times \rho)([Z_0]) \in C_0$. \\
As $C_0$ is a $G'$-stable closed subset of $W/\!/G \times \PPr(W/\!/G) $, we have $(\gamma \times \rho)([Z]) \in C_0$, for each $[Z] \in \gamma^{-1}(U)$, and thus $(\gamma \times \rho)^{-1}(C_0)$ is a closed subset of $\HHH$ containing $\gamma^{-1}(U)$. Hence $(\gamma \times \rho)^{-1}(C_0)=\HHH$ as expected.    
\end{proof}

We recall that we have the diagram 
\begin{equation}  \label{ddiiaa}
\xymatrix{ 
   C_0=\{(x,L) \in C(\Gra(n,V'^*)) \times \Gra(n,V'^*)\ |\ x \in L\} \ar[rd]^{p_2}  \ar[d]_{p_1}  &   \\
      C(\Gra(n,V'^*)) & \Gra(n,V'^*)  }
\end{equation}  
where we denote $p_1$ and $p_2$ the natural projections.

\begin{proposition} \label{iso_final2SLn}
With the same notation as in Lemma \ref{morphisme_dans_YSLn}, $\gamma \times \rho:\ \HHH \rightarrow C_0$ is an isomorphism.
\end{proposition}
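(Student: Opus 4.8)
The plan is to show that the morphism $\gamma\times\rho:\ \HH\to C_0$ is an isomorphism by exhibiting an inverse, or equivalently by showing it is bijective on points and an isomorphism on tangent spaces; but the cleanest route exploits the structure already established. We know from Lemma~\ref{morphisme_dans_YSLn} that $\gamma\times\rho$ maps $\HH$ into $C_0$, and from the reduction principle (Proposition~\ref{reduction1}) that $\HH\cong G'\times^P\Aff$, the total space of a homogeneous line bundle $\mathcal{L}$ over $\Gr(n,V'^*)\cong G'/P$ on which $P$ acts on the fiber $\Aff$ by $p.x=\det(p^{-1})x$. On the other hand, $C_0$ in the diagram~(\ref{ddiiaa}) is exactly the total space of the tautological line bundle $T$ (or rather a subbundle of $\underline{\Lambda^n V'^*}$, namely the line bundle whose fiber over $L\in\Gr(n,V'^*)$ is $\Lambda^n L\subset \Lambda^n V'^*$, whose total space is the affine cone $C(\Gr(n,V'^*))$ blown up at $0$). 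So both $\HH$ and $C_0$ are total spaces of homogeneous line bundles over the same base $G'/P$, and $\gamma\times\rho$ is a $G'$-equivariant morphism over $G'/P$ (the second projection of $C_0$ to $\Gr(n,V'^*)$ composed with $\gamma\times\rho$ is $\rho$, by construction).

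First I would record that a $G'$-equivariant morphism of homogeneous vector bundles over $G'/P$ is the same thing as a $P$-equivariant linear map on fibers over the base point $eP$; this is the standard equivalence between $G'$-linearized bundles on $G'/P$ and $P$-modules. Thus it suffices to analyze the induced map on fibers over $eP$, which is a $P$-equivariant morphism $\varphi:\ \Aff\to \Lambda^n E$ (identifying the fiber of $C_0$ over $eP=E$ with the line $\Lambda^n E\cong \Aff$). This $\varphi$ is given, on the level of points, by sending $Z\in F\cong\Aff$ to $\gamma(Z)\in\Lambda^n E$; concretely, $F$ is the fiber $\Hilb^G_{h_{W'}}(W')$ with $W'=\Hom(V'/E^\perp,V)$ and $n'=n$, where by Corollary~\ref{cas_faciles_SLn} the universal family is $\det:\ W'\to\Aff$, and the Hilbert–Chow morphism on this fiber is precisely the determinant map $\Aff\xrightarrow{\sim}\Lambda^n E$. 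Hence $\varphi$ is an isomorphism of $P$-schemes (it is literally the identification used in Corollary~\ref{cas_faciles_SLn}), and one checks the $P$-equivariance: $P$ acts on the source fiber $\Aff$ by $\det(p^{-1})$ and on $\Lambda^n E$ also by the $n$-th exterior power of the action on $E$, i.e. by $\det(p^{-1}|_E)$ — but these agree up to the identification already built in, so $\varphi$ intertwines the two $P$-actions.

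Having shown that the fiberwise map $\varphi$ over $eP$ is a $P$-equivariant isomorphism, I would conclude by the equivalence of categories above that $\gamma\times\rho$ is itself an isomorphism of $G'$-varieties over $G'/P$. Concretely: the inverse is the $G'$-equivariant morphism $C_0\to\HH$ induced by $\varphi^{-1}$, and $G'$-equivariance together with the fact that $\rho$ (resp.\ $p_2$) is surjective onto $G'/P$ forces the composites in both directions to be the identity, since they are identities on each fiber and respect the $G'$-action. An alternative, if one prefers to avoid invoking the bundle equivalence, is to note that $\gamma\times\rho$ is a $G'$-equivariant bijective morphism (bijectivity follows: over $U$ it is an isomorphism by Proposition~\ref{ouvert_platitudeSLn} and Lemma~\ref{morphisme_dans_YSLn}, and the fiber of $\gamma$ over $0$ is the single point $Z_0$, which maps to $(0,\rho(Z_0))$, while the zero section of $C_0$ over $\Gr(n,V'^*)$ is hit by the $G'$-orbit of $Z_0$) between smooth varieties of the same dimension with $C_0$ normal, hence an isomorphism by Zariski's main theorem.

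The main obstacle, and the step requiring genuine care rather than formalism, is the precise identification of the homogeneous line bundle $\mathcal{L}$ underlying $\HH$ with the tautological line bundle on $\Gr(n,V'^*)$ — i.e.\ matching the $P$-action $\det(p^{-1})$ on the fiber of $\HH$ with the $n$-th exterior power $P$-action on the fiber $\Lambda^n E$ of $C_0$. This is where one must be careful about which Plücker-type identification is in force and about the duality $V'$ versus $V'^*$; the determinant character of $P=\Stab_{G'}(E)$ acting on $\Lambda^n E$ (a line in $\Lambda^n V'^*$) must be checked to coincide, under the identifications of Lemma~\ref{fibrehil} and the isomorphism $\Lambda^n(V)\cong V_0$, with $\det(p^{-1})$. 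Once that character bookkeeping is done, the rest is the soft machinery of equivariant bundles over $G'/P$ and the already-established description of $F$.
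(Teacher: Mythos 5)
Your main argument is essentially the paper's proof: both identify $\HH\cong G'\times^P\Aff$ (reduction principle) and $C_0\cong G'\times^P\Aff$ as homogeneous line bundles over $G'/P$, reduce $\gamma\times\rho$ to a $P$-equivariant map on the fibers over $eP$, identify that fiber map with the Hilbert--Chow morphism for $n'=n$ via Lemma \ref{fibrehil}, and conclude it is an isomorphism by Corollary \ref{cas_faciles_SLn}. (Only your parenthetical ``alternative'' via bijectivity is shaky --- $\gamma^{-1}(0)$ is the whole zero section, not the single point $Z_0$ --- but it is not needed for the argument.)
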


\begin{proof}
We identify $\Gra(n,V'^*) \cong G'/P$ as in Section \ref{redSLn}. The projection $p_2:\ C_0 \rightarrow G'/P$ of Diagram (\ref{ddiiaa}) provides a $G'$-homogeneous line bundle structure to $C_0$ over $G'/P$. Let $D$ be the scheme-theoretic fiber of $p_2$ over $eP$, then $D \cong \Aff$ and $P$ acts on $\Aff$ by $p.x=\det(p^{-1})x$. It follows that there is a $G'$-equivariant isomorphism 
$$C_0 \cong G' \times^{P} \Aff.$$
Therefore, we have the commutative diagram
\begin{equation*} 
 \xymatrix{ 
   \HHH \ar[rr]^{\gamma \times \rho}   && C_0   \\
  {{G'} {\times}^{P} \Aff}  \ar@{.>}[rr]^{\theta}  \ar@{->>}[rd] \ar[u]^{\cong} && {{G'} {\times}^{P} \Aff} \ar@{->>}[ld] \ar[u]_{\cong} \\ & G'/{P} }  
  \end{equation*}
where $\theta$ denotes the $G'$-equivariant morphism such that the square commutes. We denote $\theta_e:\ \Aff \rightarrow \Aff$ the $P$-equivariant morphism obtained from $\theta$ by restriction to the fiber over $eP$. It follows from Lemma \ref{fibrehil} that the morphism $\theta_e$ identifies with the Hilbert-Chow morphism for $n'=n$ and thus, by Corollary \ref{cas_faciles_SLn}, $\theta_e$ is an isomorphism. Hence $\theta$ is an isomorphism as well as $\gamma \times \rho$. 
\end{proof}

\begin{proposition}  \label{res_crepantes_SLn}
We suppose that $W/\!/G$ is singular, that is, $1<n<n'-1$. Then, the desingularization $\gamma:\ \HHH \rightarrow W/\!/G$ is not crepant.
\end{proposition}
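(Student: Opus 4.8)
The plan is to show that $\gamma\colon \HH \cong C_0 \to W/\!/G = C(\Gr(n,V'^*))$ is not crepant by comparing canonical divisors, using the explicit description of $C_0$ as the blow-up at the origin obtained in Proposition \ref{iso_final2SLn}. First I would recall that $W/\!/G$ is Gorenstein (as noted after Proposition \ref{ouvert_platitudeSLn}, since $G$ is semisimple), so it has a canonical divisor $K_{W/\!/G}$ which is Cartier, and the statement ``$\gamma$ is crepant'' means $K_{C_0} = \gamma^* K_{W/\!/G}$, equivalently the discrepancy of the exceptional divisor $\mathcal{E} := p_2$-zero-section $\cong \Gr(n,V'^*)$ (which is $p_1^{-1}(0)$) is zero. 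So the whole computation reduces to computing one discrepancy coefficient $a(\mathcal{E}, W/\!/G)$ and checking it is nonzero.

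The key steps, in order: (1) Identify $C_0$ concretely. From Diagram (\ref{ddiiaa}) and the remark after Lemma \ref{fibrehil}, $C_0 \to \Gr(n,V'^*)$ is the total space of the tautological line bundle, i.e. the dual of the hyperplane/Plücker line bundle $\OO_{\Gr}(1)$ (the one giving the Plücker embedding into $\PP(\Lambda^n V'^*)$); equivalently $C_0 = \mathrm{Tot}(\OO_{\Gr}(-1))$, and the exceptional divisor $\mathcal{E}$ is the zero section, with normal bundle $N_{\mathcal{E}/C_0} \cong \OO_{\Gr}(-1)$. (2) Compute $K_{C_0}$ by adjunction along $\mathcal{E}$: since $\mathcal{E}$ is a divisor, $K_{C_0}|_{\mathcal{E}} = K_{\Gr} - N_{\mathcal{E}/C_0}^\vee$ ... more directly, $K_{C_0} = \pi^* K_{\Gr} + \pi^*(\text{det of the bundle})$ for a total space of a line bundle $\pi\colon \mathrm{Tot}(L) \to \Gr$ one has $K_{\mathrm{Tot}(L)} = \pi^*(K_{\Gr}\otimes L^{-1})$, and crucially the relative canonical of the blow-up $C_0 \to W/\!/G$ is $K_{C_0} - \gamma^* K_{W/\!/G} = a\,\mathcal{E}$ for some integer $a$. (3) Pin down $a$ by restricting to a suitable curve or by a local computation near a smooth point of $W/\!/G \setminus \{0\}$ together with the behaviour at $0$: the cleanest route is to use that $W/\!/G$ is the affine cone over the projectively Gorenstein Fano (or general) variety $\Gr(n,V'^*) \subset \PP(\Lambda^n V'^*)$, for which $K_{\Gr} = \OO_{\Gr}(-n')$ (the index of $\Gr(n,V'^*)\cong\Gr(n',n'-n)$ computed via $\Gr(a,b)$ having $K = \OO(-b)$), and apply the standard formula for blow-ups of cones: for the cone $C(Y,L)$ over $(Y,L)$ with $K_Y = L^{\otimes -r}$ (so $Y$ is Fano of index $r$ with respect to $L$), the blow-up at the vertex has discrepancy $a(\mathcal{E}) = r - 1$ along the exceptional $\mathcal{E}\cong Y$. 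Here $L = \OO_{\Gr}(1)$ and $r = n'$, so $a = n' - 1$.

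Thus $K_{C_0} = \gamma^* K_{W/\!/G} + (n'-1)\mathcal{E}$, and since we are in the case $1 < n < n'-1$, in particular $n' \geq n+2 \geq 4$, so $n'-1 \geq 3 > 0$: the discrepancy is strictly positive, hence $\gamma$ is not crepant. (In fact $\gamma$ is a discrepant resolution, and one sees $W/\!/G$ has terminal singularities here.) To write this up rigorously I would either cite a standard reference for the canonical class of a cone and its blow-up (e.g. via the grading/adjunction on $\mathrm{Proj}$), or give the short direct argument: take the affine chart of $C_0$ over a Plücker coordinate open set $U_\alpha \cong \mathbb{A}^{N}$ of $\Gr$, where $C_0|_{U_\alpha} \cong \mathbb{A}^{N} \times \mathbb{A}^1$ with the map to $W/\!/G$ given by $t \mapsto t\cdot(\text{Plücker coordinates, which are weight-}1\text{ polynomials})$, compute the Jacobian determinant of this chart map and read off the vanishing order along $\{t=0\}$, which is $\dim C(\Gr) - 1 - \dim\mathcal{E} + (\text{weight of a generator of }K_{W/\!/G})$; the bookkeeping gives exactly $n'-1$.

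The main obstacle I expect is step (3): getting the discrepancy coefficient exactly right, because it requires knowing (a) the index of the Grassmannian $\Gr(n,V'^*)$ with respect to the \emph{Plücker} polarization — i.e. that $-K_{\Gr} = \OO_{\Gr}(n')$ — and (b) the precise normalization relating the Plücker generator of $\Pic(\Gr)$ to the generator of the (Cartier) canonical class of the cone $W/\!/G$ at the vertex, since an off-by-one here is exactly the difference between crepant and not. The safest way around this is to verify it by a completely explicit Jacobian computation in one Plücker chart of $C_0$ (where everything is polynomial and the cone is literally parametrized), rather than relying on cone-formula folklore; this is routine but must be done carefully, and once done it immediately yields $n'-1 > 0$ under the hypothesis $n < n'-1$. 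Alternatively, since the paper only needs \emph{non}-crepancy (not the exact value), it suffices to exhibit a single resolution-of-the-cone argument showing the discrepancy is positive: blow-ups at the vertex of a cone over a variety with very ample $L$ and $K_Y$ a \emph{negative} multiple of $L$ always have strictly positive discrepancy, and here $K_{\Gr}$ is a negative multiple of $\OO_{\Gr}(1)$, which finishes the proof without needing the precise coefficient.
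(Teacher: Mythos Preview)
Your proposal is correct and follows essentially the same line as the paper: identify $\gamma$ with the blow-up $p_1\colon C_0\to C(\Gr(n,V'^*))$, take the exceptional divisor $\mathcal{E}\cong\Gr(n,V'^*)$ with normal bundle $\OO_{\Gr}(-1)$, and use $K_{\Gr}=\OO_{\Gr}(-n')$ to see the discrepancy is nonzero.

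The paper's write-up is a bit more economical than what you outline, and it sidesteps exactly the obstacle you flag in step~(3). Instead of computing the discrepancy coefficient via a cone formula or a Jacobian in a Pl\"ucker chart, the paper argues by contradiction: if $\gamma$ were crepant then $\omega_{C_0}\cong p_1^*\omega_{W/\!/G}$, and since $W/\!/G$ is a Gorenstein \emph{affine cone} one has $\omega_{W/\!/G}\cong\OO_{W/\!/G}$ outright, so $\omega_{C_0}\cong\OO_{C_0}$. Adjunction for the divisor $\mathcal{E}\subset C_0$ then gives $\omega_{\mathcal{E}}\cong\omega_{C_0}\otimes\OO(\mathcal{E})|_{\mathcal{E}}\cong\OO_{\Gr}(-1)$, contradicting $\omega_{\Gr}\cong\OO_{\Gr}(-n')$ since $n'>1$. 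This uses only adjunction and the triviality of the dualizing sheaf on a Gorenstein affine cone, so there is no normalization bookkeeping and no risk of the off-by-one you were worried about. Your route via the explicit discrepancy $a=n'-1$ gives strictly more information (e.g.\ that the singularity is terminal), but for the bare statement the paper's shortcut is the cleaner path.
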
 

\begin{proof}
We use the notation of Diagram (\ref{ddiiaa}) and we identify $\gamma:\HHH  \rightarrow W/\!/G$ with the blow-up $p_1:\ C_0 \rightarrow W/\!/G$ by Theorem \ref{SLLn}. We denote $\sigma:\ \Gra(n,V'^*) \rightarrow C_0$ the zero section. The exceptional divisor of $ p_1 $, denoted $D_{p_1} $, identifies with $\Gra(n,V'^*)$ via $\sigma$.
For every Cohen-Macaulay variety $X$, we denote $\omega_X$ its dualizing sheaf. By the Adjunction Formula (\cite[Proposition 8.20]{Ha}): 
$$\omega_{D_{p_1} } \cong \omega_{C_0} \otimes \OOO(D_{p_1}) \otimes \OOO_{D_{p_1}}.$$
By \cite[Proposition 6.18]{Ha}, we have $\OOO(-D_{p_1}) \cong \III_{D_{p_1}}$, where $\III_{D_{p_1}}$ is the sheaf ideal of $D_{p_1}$ in $C_0$. But $D_{p_1}$ is the image of the zero section in $C_0$, and thus ${\III_{D_{p_1}}}_{|D_{p_1}} \cong \OOO_{D_{p_1}}(1)$. It follows that
$$\omega_{D_{p_1}} \cong \omega_{C_0} \otimes \OOO_{D_{p_1}}(-1).$$
If ${p_1}$ is crepant, then 
$$\omega_{C_0} \cong p_{1}^{*}(\omega_{W/\!/G}) \cong p_{1}^{*}(\OOO_{W/\!/G}) \cong \OOO_{C_0},$$ 
where the second isomorphism follows from the well-known fact that, for a Gorenstein affine cone $X$, we have $\omega_X \cong \OOO_X$ (see \cite[\S A.1.1]{Te1} for a proof).\\
Hence $\omega_{D_{p_1}} \cong \OOO_{D_{p_1}}(-1)$. But $D_{p_1} \cong \Gra(n,V'^*)$, and thus 
$$\omega_{D_{p_1}} \cong \omega_{\Gra(n,V'^*)} \cong \OOO_{\Gra(n,V'^*)}(-n') \cong \OOO_{D_{p_1}}(-n').$$ 
As $n'>1$, we get a contradiction. It follows that ${p_1}$ cannot be crepant.  
\end{proof}

\section{Case of $GL_2$ acting on $(k^2)^{\oplus n_1} \oplus (k^{2*})^{\oplus n_2} $}  \label{GLngeneral}

\subsection{}  \label{yy}

We denote $G:=GL(V)$, $G':=GL(V_1) \times GL(V_2)$, and $W:=\Hom(V_1,V) \oplus \Hom(V,V_2)$. We consider the action of $G' \times G$ on $W$ given by:
\begin{equation*}  
\forall (u_1,u_2) \in W,\ \forall (g_1,g_2,g) \in G' \times G,\ (g_1,g_2,g).(u_1,u_2):=(g \circ u_1 \circ g_1^{-1},g_2 \circ u_2 \circ g^{-1}).
\end{equation*}  
We recall that $W/\!/G=\Hom(V_1,V_2)^{\leq n}$, and that we denote 
\begin{equation*} 
\YYY_0:=\left\{(f,L)\in \Hom(V_1,V_2)^{\leq 2} \times \PPr (\Hom(V_1,V_2)^{\leq 2})\ |\ f \in L \right\}=\OOO_{\PPr(\Hom(V_1,V_2)^{\leq 2})}(-1) 
\end{equation*}
the blow-up of $\Hom(V_1,V_2)^{\leq 2}$ at 0, and $\YYY_1$ the blow-up of $\YYY_0$ along the strict transform of $\Hom(V_1,V_2)^{\leq 1}$. The aim of this section is to show the main Theorem for Case 2. More precisely, we will show:  

\begin{theorem} \label{GL2}
If $n=2$ and $n_1+n_2 \leq 3$, then $\HHH \cong \Hom(V_1,V_2)$ and the Hilbert-Chow morphism $\gamma$ is an isomorphism.\\
If $n_1=n_2=n=2$, then $\HHH \cong \YYY_0$ and $\gamma$ is the blow-up of $\Hom(V_1,V_2)$ at 0.\\
If $\min(n_1,n_2) \geq n=2$ and $\max(n_1,n_2)>2$, then $\HHH \cong \YYY_1$ and $\gamma$ is the composition of blows-up that define $\YYY_1$.\\
In all cases, $\HHH$ is  a smooth variety and thus, when $W/\!/G$ is singular, $\gamma$ is a desingularization.
\end{theorem}

\begin{remark}
The case $\min(n_1,n_2)=1$ and $\max(n_1,n_2) \geq 3$ is quite different from the other cases and thus is not treated by Theorem \ref{GL2}. In this case, we show in \cite[\S 2.1.2]{Te1} that the main component $\HHHp$ is always smooth, and that $\HHH=\HHHp$ for $n=1$. We do not know if $\HHH=\HHHp$ for an arbitrary $n$.
\end{remark}

The cases $n=2$ and $n_1+n_2 \leq 3$ are easy and are treated by Corollary \ref{cas_facile}. The case $n_1=n_2=n=2$ is handled by Proposition \ref{gammaiso}. Finally, we obtain the last case from the previous one by using the reduction principle for $GL_n$ as we did for $SL_n$ in Section \ref{casSln}.

In Sections \ref{description_quotient} to \ref{reducGLn}, we do not make any assumption about the value of $n$. Indeed, even if we are interested in the case $n=2$, the results that we will show in those sections are quite general and they do not admit any real simplification if we suppose $n=2$.

\subsection{Generic fiber and flat locus of the quotient morphism} \label{description_quotient}

The most important results of this section are Propositions \ref{descriptiongeofib} and \ref{ouvertplatitude} that describe the fibers and the flat locus of the quotient morphism $\nu$. 

This morphism is also studied in \cite[\S II.4.1]{Kr}. However, this reference does not contain all the results that we will need subsequently and our formulations, notation and methods are somewhat different.

If $n_1,n_2 >n$, then $W/\!/G=\Hom(V_1,V_2)^{\leq n}$ is of dimension $nn_1+nn_2-n^2$, and by \cite[\S 6.1]{We}, $\Hom(V_1,V_2)^{\leq n}$ is normal, Cohen-Macaulay, and ${\Hom}(V_1,V_2)^{\leq n-1}$ is its singular locus. Moreover, by \cite[Theorem 5.5.6]{Sv}, the variety $W/\!/G$ is Gorenstein if and only if $n_1=n_2$. Otherwise, $W/\!/G=\Hom(V_1,V_2)$ is an affine space. We denote
$$N:=\min(n_1,n_2,n).$$ 
The action of $G'$ on $W$ induces the following action on $W/\!/G$:
\begin{equation*}
\forall (g_1,g_2) \in G',\  \forall f \in  {\Hom}(V_1,V_2),\ (g_1,g_2).f:=g_2 \circ f \circ g_{1}^{-1}. 
\end{equation*}
The variety $W/\!/G$ decomposes into $N+1$ orbits for this action:
\begin{equation*}
U_i:=\{ f \in  {\Hom}(V_1,V_2) \ \mid  \ \rank(f) = i \}
\end{equation*}
for $i=0, \ldots,N$; the closures of these orbits being nested in the following way
\begin{equation*}
\{0\}=\overline{U_0} \subset \overline{U_1} \subset \cdots\subset \overline{U_{N}}=W/\!/G.
\end{equation*}
We note that $U_N$ is the unique open orbit of $W/\!/G$.

\begin{definition} \label{defnilk}
The \textit{null cone of $\nu$}, denoted $\NNN(W,G)$, is the scheme-theoretic fiber of $\nu$ over $0$. 
\end{definition}

Some geometric properties of $\NNN(W,G)$ are obtained in \cite{KS}. For instance, $\NNN(W,G)$ is always reduced, but $\NNN(W,G)$ is irreducible if and only if $n_1+n_2 \leq n$ (\cite[Theorem 9.1]{KS}). We are going to determine the irreducible components of $\NNN(W,G)$ and their dimensions. We fix $m \in \{0, \ldots,n\}$ and we define the set 
\begin{equation*} 
X_m:=\left\{(u_1,u_2)\in W\ \middle| 
    \begin{array}{ll}
       \Ima(u_1) \subset \Ker(u_2);\\
       \rank(u_1) \leq \min(n_1,m);\\
       \dim(\Ker(u_2)) \geq \max(n-n_2,m). 
    \end{array}
 \right\}.
\end{equation*} 
We consider the diagram
\begin{equation*}
\xymatrix{ &  Z_m \ar@{->>}[ld]_{p_1} \ar@{->>}[rd]^{p_2} \\   X_m && \Gra(m,V) }
\end{equation*} 
where 
$$Z_m:=\{(u_1,u_2,L) \in \Hom(V_1,V) \times \Hom(V,V_2)\times \Gra(m,V)\ \mid  \ \Ima(u_1) \subset L \subset \Ker(u_2)\}$$ 
and the $p_i$ are the natural projections. We fix $L_0 \in \Gra(m,V)$. The second projection equips $Z_m$ with a structure of homogeneous vector bundle over $\Gra(m,V)$ whose fiber over $L_0$ is isomorphic to $F_m:=\Hom(V_1,L_0) \times \Hom(V/L_0,V_2)$. In other words, we have $Z_m=\Hom(\underline{V_1},T) \times \Hom(\underline{V}/T,\underline{V_2})$, where $T$ is the tautological bundle of $\Gra(m,V)$ and $\underline{V},\underline{V_1},\underline{V_2}$ are the trivial bundles with fibers $V,\ V_1$ and $V_2$ respectively. Hence $Z_m$ is a smooth variety of dimension
\begin{align*}
\dim(Z_m)&=\dim(\Gra(m,V))+\dim(\Hom(V_1,L_0) \times \Hom(V/L_0,V_2))\\
         &=m(n-m)+n_1 m+(n-m)n_2.
\end{align*}

\begin{proposition} \label{compirredfibzero}
With the above notation, each $X_m$ is an irreducible closed subset of $W$ and the irreducible components of the null cone $\NNN(W,G)$ are
$$\left\{
    \begin{array}{ll}
        X_i,\ \textit{for}\ i={\max(0,n-n_2)}, {\max(0,n-n_2)}+1, \ldots ,{\min(n,n_1),} &\text{ if } n<n_1+n_2;\\
        X_{n_1} &\text{ if } n\geq n_1+n_2.
    \end{array}
\right.$$
In addition, if $m \leq n_1$ or $m \geq n-n_2$, the map $p_1:\ Z_m \rightarrow X_m$ is birational. 
\end{proposition}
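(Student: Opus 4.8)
The plan is to analyze the null cone $\NNN(W,G)$ via the incidence varieties $Z_m$, establishing first that each $X_m$ is irreducible and then computing which $X_m$ lie inside $\NNN(W,G)$ and are maximal among those.

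\textbf{Step 1: irreducibility of $X_m$ and the birationality of $p_1$.} First I would observe that $Z_m$ is smooth and irreducible, being the total space of a vector bundle over the (irreducible) Grassmannian $\Gr(m,V)$, as computed above. Since $X_m=p_1(Z_m)$ is the image of an irreducible variety under a morphism, $X_m$ is irreducible; closedness follows because $p_1$ is projective (it factors through the projection from $W\times\Gr(m,V)$, which is proper, and the conditions cutting out $Z_m$ are closed). For the birationality claim when $m\le n_1$ or $m\ge n-n_2$, I would exhibit a dense open subset of $X_m$ over which $p_1$ has a unique preimage. If $m\le n_1$, a generic pair $(u_1,u_2)\in X_m$ has $\rg(u_1)=m$, so $L=\Im(u_1)$ is forced; if $m\ge n-n_2$, a generic pair has $\dim\Ker(u_2)=m$, so $L=\Ker(u_2)$ is forced. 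In either case the fiber of $p_1$ is a single reduced point over this open set, giving birationality.

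\textbf{Step 2: identifying the components of $\NNN(W,G)$.} I would first note that set-theoretically $\NNN(W,G)=\nu^{-1}(0)=\{(u_1,u_2): u_2\circ u_1=0\}=\{(u_1,u_2):\Im(u_1)\subset\Ker(u_2)\}$, and since $\NNN(W,G)$ is reduced by \cite[Theorem 9.1]{KS} it suffices to work with the underlying set. Stratifying by $r:=\rg(u_1)$ (equivalently by $\dim\Ker(u_2)\ge n-r$ and $\ge$ the rank constraints coming from $n_1,n_2$), every point of $\NNN(W,G)$ lies in some $X_m$: indeed if $\rg(u_1)=r$ then $\Im(u_1)\subset\Ker(u_2)$ forces $\dim\Ker(u_2)\ge\max(r,n-n_2)$, and one checks $X_r$ (or a slightly larger $X_m$) contains the point. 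So $\NNN(W,G)=\bigcup_m X_m$ where $m$ ranges over the admissible values $\max(0,n-n_2)\le m\le\min(n,n_1)$ (this range is forced by requiring $\rg(u_1)\le\min(n_1,m)$ to be compatible with $\dim\Ker(u_2)\ge\max(n-n_2,m)$ and $\dim\Ker u_2\le n$). When $n\ge n_1+n_2$ all these $X_m$ collapse: any pair with $\Im(u_1)\subset\Ker(u_2)$ automatically has a common complementary space of dimension $\ge n_1$, so $X_{n_1}$ already contains everything. The key point is then to show the listed $X_m$ are pairwise incomparable (no one contained in another) in the case $n<n_1+n_2$. Using the dimension formula $\dim Z_m=m(n-m)+n_1m+(n-m)n_2$ together with the birationality of $p_1$ from Step 1, I get $\dim X_m=m(n-m)+n_1m+(n-m)n_2$ for admissible $m$; a short computation shows this function of $m$ is strictly concave, so distinct admissible $X_m$ have distinct dimensions in the "extreme" cases, and more care (comparing generic ranks of $u_1$, which is an invariant distinguishing the strata) shows no containment $X_m\subset X_{m'}$ holds for $m\ne m'$ in the admissible range. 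Hence each admissible $X_m$ is an irreducible component.

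\textbf{Main obstacle.} The genuinely delicate part is verifying that the listed $X_m$ are exactly the irreducible components — i.e. that none is contained in the closure of another and that together they cover $\NNN(W,G)$. The covering is a matter of linear algebra (choosing an appropriate $L$ between $\Im u_1$ and $\Ker u_2$), but the incomparability requires a clean invariant separating the strata; the generic value of $\rg(u_1)$ on $X_m$ — which equals $\min(n_1,m)$ — together with the dimension count should do it, but one must handle carefully the boundary values $m=\max(0,n-n_2)$ and $m=\min(n,n_1)$ where $Z_m\to X_m$ might fail to be birational if the extra hypothesis $m\le n_1$ or $m\ge n-n_2$ were not available (it always is, for admissible $m$, which is why the hypothesis in the last sentence of the Proposition is exactly what is needed). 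I would also need to double-check the degenerate regime $n\ge n_1+n_2$ separately, where irreducibility of $\NNN(W,G)$ from \cite{KS} gives a sanity check that $X_{n_1}$ is the whole null cone.
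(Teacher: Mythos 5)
Your overall strategy is the paper's: realize $X_m$ as the image of the bundle $Z_m\to\Gr(m,V)$ to get irreducibility and closedness, force $L=\Im(u_1)$ (when $m\le n_1$) or $L=\Ker(u_2)$ (when $m\ge n-n_2$) at a generic point to get birationality of $p_1$, check that the admissible $X_m$ cover $\NNN(W,G)$, and conclude by showing no containments among them. Steps 1 and the covering argument are fine and match the paper (which disposes of the non-admissible $m$ by the chains of inclusions $X_0\subset\cdots\subset X_{\max(0,n-n_2)}$ and $X_{\min(n,n_1)}\supset\cdots\supset X_n$, and by the equalities $X_{n_1}=\cdots=X_{n-n_2}$ when $n\ge n_1+n_2$).

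The gap is exactly where you locate it, and your proposed repair does not work as stated. The dimension function $P(m)=m(n-m)+n_1m+(n-m)n_2$ is a concave parabola, symmetric about $m=(n+n_1-n_2)/2$, so strict concavity does \emph{not} give distinct dimensions for distinct admissible $m$: already for $n_1=n_2=n=2$ (the case the paper actually needs later) one has $\dim X_0=\dim X_2=4$. Moreover the generic rank of $u_1$ alone only excludes containments $X_m\subset X_{m'}$ with $m'<m$; it says nothing against $m'>m$, since $X_{m'}$ only imposes the upper bound $\rg(u_1)\le m'$. The correct (and short) argument, which is what the paper's ``one easily checks'' amounts to, uses both invariants at once: for admissible $m$ (i.e.\ $\max(0,n-n_2)\le m\le\min(n,n_1)$) a generic point of $X_m$ satisfies $\rg(u_1)=m$ \emph{and} $\dim\Ker(u_2)=m$ (equivalently $\Im(u_1)=\Ker(u_2)$ of dimension $m$); such a point lies in $X_{m'}$ only if $m\le\min(n_1,m')=m'$ and $m\ge\max(n-n_2,m')=m'$, i.e.\ $m=m'$. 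Hence $X_m\not\subset X_{m'}$ for admissible $m\ne m'$, and each admissible $X_m$ is a component. With this substitution your argument is complete; the dimension count is not needed for the Proposition (it is only used afterwards, in Corollary \ref{fibzero}, to compute $\dim\NNN(W,G)$).
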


\begin{proof}
First, by definition, the $X_m$ are closed subsets of $W$. The morphism $p_1$ is surjective and $Z_m$ is irreducible, hence $X_m$ is irreducible. 
Then
$$\NNN(W,G)=\{(u_1,u_2)\in \Hom(V_1,V) \times \Hom(V,V_2)\ \mid  \ \Ima(u_1) \subset \Ker(u_2) \}= \bigcup_{i=0}^{n} X_i . $$
If  $n_1 \leq n-n_2$, then  $$\left\{
    \begin{array}{ll}
        X_0 \subset  \cdots \subset X_{n_1};   \\
        X_{n_1} = \cdots=X_{n-n_2};\\
        X_{n-n_2} \supset  \cdots \supset X_n;
    \end{array}
\right.$$  
and thus $X=X_{n_1}$.\\
If $n_1 > n-n_2$, then  $$\left\{
    \begin{array}{ll}
        X_0 \subset  \cdots \subset X_{\max(0,n-n_2)};   \\
        X_{\min(n,n_1)} \supset  \cdots \supset X_n;
    \end{array}
\right.$$  
and one easily checks that there is no other inclusion relation between the $X_m$. It remains to show the last assertion of the proposition. We define
$$Z'_m:=\{(u_1,u_2,L) \in Z_m\ \mid  \ \rank(u_1)=\min(m,n_1) \text{ and } \dim(\Ker(u_2))=\max(m,n-n_2)\}$$ and 
$$X'_m:=\{(u_1,u_2)\in X_m\ \mid  \ \rank(u_1)=\min(m,n_1) \text{ and } \dim(\Ker(u_2))=\max(m,n-n_2)\} .$$ 
It is clear that $Z'_m$ resp. $X'_m$, is a dense open subset of $Z_m$ resp. of $X_m$, and that we have $Z'_m=p_{1}^{-1}(X'_m)$.
If $m \leq n_1$ or $m \geq n-n_2$, then $p_1:\ Z'_m \rightarrow X'_m$ is an isomorphism, and thus $p_1$ is birational. 
\end{proof}

\begin{corollary} \label{fibzero}
The dimension of the null cone $\NNN(W,G)$ is:\\
$\bullet \ n n_2$ if $n \leq n_2-n_1$;\\
$\bullet \ n n_1$ if $n \leq n_1-n_2$;\\
$\bullet \ \frac{1}{4}n(n+2n_1+2n_2)+\frac{1}{4}{(n_1-n_2)}^2$ if $|n_1-n_2|<n<n_1+n_2$ and $n+n_1-n_2$ is even; \\
$\bullet \ \frac{1}{4}n(n+2n_1+2n_2)+\frac{1}{4}{(n_1-n_2)}^2-\frac{1}{4}$ if $|n_1-n_2|<n<n_1+n_2$ and $n+n_1-n_2$ is odd; \\ 
$\bullet \ n n_1+n n_2 -n_1 n_2$ if $n \geq n_1+n_2$.
\end{corollary}

\begin{proof}
By Proposition \ref{compirredfibzero}, it is enough to compute the dimension of $X_m$ for some $m$. We denote $P(m):=m(n-m)+n_1 m+n_2 (n-m)$ the dimension of $Z_m$. If $m \leq n_1$ or $m \geq n-n_2$, we have $\dim({X_m})=\dim(Z_m)=P(m)$.\\
If $n \geq n_1+n_2$, then $\dim(\NNN(W,G))=\dim(X_{n_1})=n n_1+ n n_2- n_1 n_2 . $\\
If $n < n_1+n_2$, then 
$$\dim(\NNN(W,G))=\dim \left(\bigcup_{i=\max(0,n-n_2)}^{\min(n,n_1)} X_i \right)= \max_{i={\max(0,n-n_2)}, \ldots,{\min(n,n_1)}} P(i)$$
and a simple study of the variations of the polynomial $P$ gives the result. 
\end{proof}

Now we are going to study the geometry of the fibers of $\nu$ over each orbit $U_i$. We recall that, by homogeneity, all the fibers over a given orbit are isomorphic. Hence, we just have to describe the fiber of $\nu$ over a point of each orbit. We fix bases $\BBB$, $\BBB_1$ and $\BBB_2$ of $V$, $V_1$ and $V_2$ respectively, and thus we can identify $W=\Hom(V_1,V) \times \Hom(V,V_2) \cong \MMM_{n,n_1} \times \MMM_{n_2,n}$ and $\Hom(V_1,V_2) \cong \MMM_{n_2,n_1}$, where $\MMM_{p,q}$ denotes the space of matrices of size $p \times q$.

Let $ \ 0 \leq r \leq N$, we denote 
\begin{equation} \label{defj}
J_r =\begin{bmatrix}
{I}_r \ \ &0_{r,n_1-r} \\
0_{n_2-r,r}  \ &0_{n_2-r,n_1-r} 
\end{bmatrix},
\end{equation}
where $I_r$ is the identity matrix of size $r$. The matrix $J_r$ identifies with an element of the orbit $U_r$ via the isomorphism $\Hom(V_1,V_2) \cong \MMM_{n_2,n_1}$.

We fix $r \in \{0, \ldots,N\}$, and we denote 
\begin{equation} \label{defwr}
w_r:=\left( \begin{bmatrix}
I_r &0 \\
0   &0 \end{bmatrix},\begin{bmatrix}
I_r  &0 \\
0  &0 \end{bmatrix} \right) \in W
\end{equation} 
and $G_r \subset G$ the stabilizer of ${w_r}$.

\begin{lemma} \label{orbfermee}
With the above notation, the orbit $G.{w_r} \subset W$ is closed in $W$, and is the unique closed orbit contained in ${\nu}^{-1}(J_r)$.
\end{lemma}

\begin{proof}
We have $\nu ({w_r})=J_r$ and it can be checked that 
$$G_r =\left\{ \begin{bmatrix}
I_r   &0 \\
0     &M \end{bmatrix},\ M\in {GL}_{n-r} \right\} \cong {GL}_{n-r};$$ 
this is a reductive subgroup of $G$. By \cite[\S I.6.2.5, Theorem 10]{SB}, we have the equivalence 
$$ G.{w_r} \text{ is closed in } W \Leftrightarrow C_G(G_r).{w_r} \text{ is closed in } W.$$ 
Then $C_G(G_r)=\left\{\begin{bmatrix}
M  &0 \\
0  &\lambda I_{n-r} \end{bmatrix},\ M \in {GL}_{r},\ \lambda \in \Gmm \right\}$, where $\Gmm$ denotes the multiplicative group. Hence 
$$C_G(G_r).{w_r}=\left\{ \left( \begin{bmatrix}
M   &0 \\
0   &0 \end{bmatrix},\begin{bmatrix}
M^{-1} &0 \\
0     &0 \end{bmatrix} \right)\ ,\ M \in {GL}_{r}\right\}$$ 
is a closed subset of $W$, and thus $G.{w_r}$ is a closed orbit in ${\nu}^{-1}(J_r)$. By \cite[\S II.3.1, Th\'{e}or\`{e}me 1]{SB}, the fiber ${\nu}^{-1}(J_r)$ contains a unique closed orbit, hence the result. 
\end{proof}

\begin{definition} \label{slice}
Let $x \in W$ such that the orbit $G.x$ is closed in $W$ and let $G_x \subset G$ be the stabilizer of $x$. By \cite[\S 6.2.1]{SB}, the $G_x$-module $T_x(G.x)$ admits a $G_x$-stable complement $M_x$ in $W$, which is called the \textit{slice representation} of $G_x$ at $x$. 
\end{definition}

\begin{lemma} \label{slice_explicite}
Let $r \in \{0,\ldots,N\}$, $G_r \subset G$ the stabilizer of $w_r$ defined by (\ref{defwr}), and $M_{w_r}$ the slice representation of $G_r$ at $w_r$. There is an isomorphism of $G_r$-modules
$$ M_{w_r} \cong  E_r^{\oplus n_1-r} \oplus  E_r^{* \oplus n_2-r} \oplus  V_0^{\oplus r(n_1+n_2-r)}, $$
where $E_r$ resp. $E_r^*$, is the defining representation resp. the dual representation, of $G_r$, and $V_0$ is the trivial representation of $G_r$.   
\end{lemma}

\begin{proof}
By definition of $M_{w_r}$, we have $M_{w_r} \cong W/T_{w_r}(G.{w_r})$ as $G_r$-modules. As $V \cong E_r \oplus V_{0}^{\oplus r}$ as $G_r$-modules, we deduce that 
\begin{equation*}
 W  \cong E_{r}^{\oplus n_1} \oplus E_{r}^{* \oplus n_2} \oplus V_{0}^{\oplus r(n_1+n_2)}.    
\end{equation*}
Then $T_{w_r}(G.{w_r}) \cong \ggg/\ggg_r$, where $\ggg$ resp. $\ggg_r$, denotes the Lie algebra of $G$ resp. of $G_r$. We have 
$$ \ggg \cong (V_{0}^{\oplus r}  \otimes V_{0}^{\oplus r} ) \oplus ( V_{0}^{\oplus r}  \otimes  E_r) \oplus (E_r^{*} \otimes V_{0}^{\oplus r} )  \oplus(E_r^{*} \otimes E_r) \ \text{ and } \ \ggg_r \cong E_r^{*} \otimes E_r.$$
Hence
$$  T_{w_r}(G.{w_r}) \cong E_{r}^{\oplus r} \oplus E_r^{* \oplus r} \oplus V_{0}^{\oplus r^2},$$
and thus
$$ M_{w_r} \cong  E_{r}^{\oplus n_1-r} \oplus E_r^{* \oplus n_2-r} \oplus V_{0}^{\oplus r(n_1+n_2-r)}$$  
as $G_r$-modules. 
\end{proof}

We denote ${\nu}_M:\ M_{w_r} \rightarrow M_{w_r}/\!/G_r$ the quotient morphism, and  
$$\NNN(N_{w_r},G_r):={{\nu}_M}^{-1}({\nu}_M(0))$$
the null cone of ${\nu}_M$. The group $G_r$ acts naturally on $G$ by right multiplication, as well as on $\NNN(M_{w_r},G_r)$ by definition of ${\nu}_M$.
We can thus consider the quotient
$$F_{w_r}:=G \times^{G_r} \NNN(M_{w_r},G_r), $$
which is naturally equipped with a $G$-scheme structure by \cite[\S I.5.14]{Ja}.\\ 
Next, we denote ${(W/\!/G)}^{(G_r)} \subset W/\!/G$ the subset of closed $G$-orbits of $W$ such that $G_r$ is conjugate to the stabilizer of a point of those orbits. In particular, Lemma \ref{orbfermee} implies that $G.{w_r} \in {(W/\!/G)}^{(G_r)}$. We denote $W^{(G_r)}:={\nu}^{-1}({(W/\!/G)}^{(G_r)}) \subset W$. Then, by \cite[\S 6.2.3, Theorem 8]{SB}, the sets ${(W/\!/G)}^{(G_r)}$ and $W^{(G_r)}$ are smooth subvarieties of $W/\!/G$ and $W$ respectively. Hence there is a morphism 
$${\nu '}:=\nu_{| W^{(G_r)}}:\ W^{(G_r)} \rightarrow {(W/\!/G)}^{(G_r)}$$ 
and, by \cite[\S 6.2.3, Theorem 8]{SB}, $\nu'$ is a fibration whose fiber is isomorphic to $F_{w_r}$. 
Hence 
$${\nu}^{-1}(J_r)= {\nu '}^{-1}(J_r) \cong G {\times}^{G_r} \NNN(M_{w_r},G_r) . $$
Let $F_1$, $F_2$ and $F_3$ be vector spaces of dimension $n_1-r$, $n_2-r$ and $r(n_1+n_2-r)$ respectively on which $G_r$ acts trivially. 
By Lemma \ref{slice_explicite}, there is an isomorphism of $G_r$-modules
$$ M_{w_r} \cong \Hom(F_1, E_r) \times \Hom(E_r, F_2) \times F_3  . $$ 
The quotient morphism ${\nu}_{M}$ is given by:
$$\begin{array}{lrcl}
 {\nu}_{M}:\  &\Hom(F_1, E_r) \times \Hom(E_r, F_2) \times F_3  & \rightarrow  & \Hom(F_1, F_2) \times F_3  . \\
        & (u_1',u_2',x)  & \mapsto      &  (u_2' \circ u_1',x) 
\end{array}$$
Hence $\NNN(M_{w_r},G_r):= {\nu}_{M}^{-1}({\nu}_M(0))={\nu}_{M}^{-1}(0) \cong {{\nu}_{M}'}^{-1}(0)$ with
\begin{equation} \label{qumro}
\begin{array}{lrcl}
 {\nu'_{M}}:\  &\Hom(F_1, E_r) \times \Hom(E_r, F_2) & \rightarrow  & \Hom(F_1, F_2)  .  \\
        & (u_1',u_2')  & \mapsto      &  u_2' \circ u_1' 
\end{array}
\end{equation}
The next proposition sums up our study of the fiber of $\nu$ over $J_r$ for $r=0, \ldots,N$. 

\begin{proposition} \label{descriptiongeofib}
Let $r \in \{0,\ldots,N\}$, $G_r \subset G$ be the stabilizer of $w_r$ defined by (\ref{defwr}), and ${\nu'_M}$ the quotient morphism defined by (\ref{qumro}). There is a $G$-equivariant isomorphism
$${\nu}^{-1}(J_r) \cong G {\times}^{G_r} {\nu'_M}^{-1}(0), $$
where $J_r \in U_r$ was defined by (\ref{defj}). In particular, if we denote $H:=G_{N}$ the stabilizer of $J_N$, we have 
$${\nu}^{-1}(J_{N}) \cong \left\{
    \begin{array}{ll}
        G &\text{ if } N=n; \\
        G/H &\text{ if } N=n_1=n_2<n; \\
        G {\times}^{H} \Hom(E_{N},F_2) &\text{ if } N=n_1<\min(n,n_2); \\
        G {\times}^{H} \Hom(F_1,E_{N}) &\text{ if } N=n_2<\min(n,n_1); 
    \end{array}
\right.$$ 
where $E_N$ is the defining representation of $H \cong GL_{n-N}$, $F_1$ and $F_2$ are vector spaces of dimension $n_1-N$ and $n_2-N$ respectively, and $H$ acts on $\Hom(F_1,E_{N}) \times \Hom(E_{N},F_2)$ by:
$$\forall h \in H,\ \forall (u_1',u_2') \in  \Hom(F_1,E_{N}) \times \Hom(E_{N},F_2),\ h.(u_1',u_2'):=(h \circ  u_1',u_2' \circ h^{-1}).$$    
\end{proposition}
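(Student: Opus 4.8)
=== PROOF PROPOSAL FOR PROPOSITION \ref{descriptiongeofib} ===

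The plan is to assemble the statement from the structural results established in the paragraphs immediately preceding it, so that very little new work is required; the proposition is really a summary.

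\textbf{Step 1: The Luna slice argument.} First I would invoke the Luna slice theorem material already set up in the text. By Lemma~\ref{orbfermee}, the orbit $G.w_r$ is the unique closed orbit in $\nu^{-1}(J_r)$, and its stabilizer is the reductive group $G_r \cong GL_{n-r}$. By Definition~\ref{slice} and the cited \cite[\S6.2.3, Theorem 8]{SB}, the restriction $\nu' = \nu_{|W^{(G_r)}}$ is a fibration with fiber $F_{w_r} = G \times^{G_r} \NNN(M_{w_r},G_r)$, where $M_{w_r}$ is the slice representation and $\NNN(M_{w_r},G_r)$ its null cone. Since $J_r \in (W/\!/G)^{(G_r)}$, this gives the $G$-equivariant isomorphism
$$\nu^{-1}(J_r) = \nu'^{-1}(J_r) \cong G \times^{G_r} \NNN(M_{w_r},G_r).$$

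\textbf{Step 2: Identifying the null cone of the slice.} Next I would use Lemma~\ref{slice_explicite}, which identifies $M_{w_r} \cong \Hom(F_1,E_r) \times \Hom(E_r,F_2) \times F_3$ as a $G_r$-module with $G_r$ acting trivially on $F_1, F_2, F_3$. Because $G_r$ acts trivially on the $F_3$ factor, the quotient morphism $\nu_M$ splits off $F_3$, so $\nu_M^{-1}(0) = \nu_M'^{-1}(0) \times \{0\}$, where $\nu_M'$ is the multiplication map (\ref{qumro}) on $\Hom(F_1,E_r) \times \Hom(E_r,F_2)$. Also, since $G_r$ is semisimple (or more simply, since the only relevant nilpotent fiber is over the cone point), $\NNN(M_{w_r},G_r) = \nu_M^{-1}(\nu_M(0)) = \nu_M^{-1}(0)$. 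Combining with Step~1 yields the first displayed isomorphism
$$\nu^{-1}(J_r) \cong G \times^{G_r} \nu_M'^{-1}(0).$$

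\textbf{Step 3: Specializing to $r = N$.} Finally I would specialize $r = N = \min(n_1,n_2,n)$ and write $H := G_N$. The four cases are simply a case analysis on which of $n$, $n_1$, $n_2$ realizes the minimum:
(i) if $N = n$, then $H$ is trivial, $F_1$ and $F_2$ are immaterial (the slice representation is trivial), and $\nu^{-1}(J_N) \cong G$;
(ii) if $N = n_1 = n_2 < n$, then $F_1 = F_2 = 0$, so $\nu_M'^{-1}(0)$ is a point and $\nu^{-1}(J_N) \cong G/H$;
(iii) if $N = n_1 < \min(n,n_2)$, then $F_1 = 0$, so $\nu_M'$ is the zero map on $\Hom(E_N,F_2)$ and $\nu_M'^{-1}(0) = \Hom(E_N,F_2)$, giving $\nu^{-1}(J_N) \cong G \times^H \Hom(E_N,F_2)$;
(iv) the case $N = n_2 < \min(n,n_1)$ is symmetric, giving $G \times^H \Hom(F_1,E_N)$. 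The description of the $H$-action on $\Hom(F_1,E_N) \times \Hom(E_N,F_2)$ is read off directly from (\ref{qumro}) and the definition of the slice $G_r$-action. This completes the proof.

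The step requiring the most care is Step~2: one must make sure that the trivial factor $F_3$ genuinely splits off the null cone (so that $\NNN(M_{w_r},G_r)$ is $\nu_M'^{-1}(0)$ up to the trivial factor, which then disappears in cases (ii)--(iv) and contributes nothing), and that $\nu_M'^{-1}(0)$ is indeed the full scheme-theoretic fiber — here one uses that for the multiplication map $\Hom(F_1,E_r)\times\Hom(E_r,F_2)\to\Hom(F_1,F_2)$ the null cone coincides with the fiber over $0$ since $\nu_M'(0) = 0$. Everything else is bookkeeping with the associated-bundle construction $G \times^{G_r}(-)$ and the dimension/rank conventions $N = \min(n_1,n_2,n)$.
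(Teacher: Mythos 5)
Your proposal is correct and takes essentially the same route as the paper, which likewise assembles the proposition from Lemma \ref{orbfermee}, Lemma \ref{slice_explicite} and the Luna slice theorem fibration $\nu':\ W^{(G_r)} \rightarrow (W/\!/G)^{(G_r)}$ with fiber $G \times^{G_r} \NNN(M_{w_r},G_r)$, then identifies $\NNN(M_{w_r},G_r)=\nu_M^{-1}(0)\cong {\nu'_M}^{-1}(0)$ and specializes to $r=N$. One cosmetic point: $G_r\cong GL_{n-r}$ is reductive but not semisimple, so the parenthetical appeal to semisimplicity in your Step 2 should be dropped in favor of the justification you give at the end, namely that $\nu_M(0)=0$ and the trivial factor $F_3$ splits off.
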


\begin{corollary} \label{dimfibre}
Let $r \in \{0, \ldots,N\}$, the dimension of the fiber of $\nu$ over $J_r$ (defined by (\ref{defj})) is: 
\begin{itemize} \renewcommand{\labelitemi}{$\bullet$}
\item $n \, n_2+n \, r-n_2  r$ if $n-r \leq n_2-n_1$;
\item $n \, n_1+n \, r-n_1 r$  if $n-r \leq n_1-n_2$;
\item $\frac{1}{2}(n-r)(n_1+n_2)+\frac{1}{4}{(r+n)}^2+\frac{1}{4}{(n_1-n_2)}^2$ if $|n_1-n_2|<n-r<n_1+n_2-2r$ and $n+n_1-n_2-r$ is even; 
\item $\frac{1}{2}(n-r)(n_1+n_2)+\frac{1}{4}{(r+n)}^2+\frac{1}{4}{(n_1-n_2)}^2-\frac{1}{4}$ if $|n_1-n_2|<n-r<n_1+n_2-2r$ and $n+n_1-n_2-r$ is odd; 
\item $n n_1+n n_2 -n_1 n_2$ if $n \geq n_1+n_2-r$.
\end{itemize}
\end{corollary}

\begin{proof}
By Proposition \ref{descriptiongeofib}, we have
$$\dim({\nu}^{-1}(J_r))=\dim(G {\times}^{G_r} {\nu'_M}^{-1}(0))=2nr-r^2+\dim({\nu'_M}^{-1}(0)),$$
and Corollary \ref{fibzero} gives $\dim({\nu'_M}^{-1}(0))$ in terms of $n$, $n_1$, $n_2$ and $r$.  
\end{proof}

For each triple $(n,n_1,n_2)$, Corollary \ref{dimfibre} allows one to compute the dimension of the general fibers of $\nu$, and also to determine the flat locus of $\nu$. The proof of the following proposition is analogous to Proposition \ref{ouvert_platitudeSLn}. 

\begin{proposition} \label{ouvertplatitude}
The dimension of the general fibers and the flat locus of $\nu$ are given by the following table:
\vspace*{0.5mm}
\begin{center}
\begin{tabular}{|c|c|c|}
  \hline
  configuration & dim. of the general fibers & flat locus \\
  \hline
  $n>\max(n_1,n_2)$ &     $n n_1+n n_2-n_1 n_2$ & $U_{N} \cup  \cdots \cup U_{\max(n_1+n_2-n-1, 0)}$\\
  $n=\max(n_1,n_2)$ &     $n^2$ & $U_{N} \cup U_{N-1} $\\  
  $\min(n_1,n_2) \leq n<\max(n_1,n_2)$     & $n n_1+n n_2-n_1 n_2$ & $U_{N}$\\
  $n < \min(n_1,n_2)$  & $n^2$ & $U_{N}$\\ 
  \hline    
\end{tabular}\\
\end{center}
\vspace*{0.5mm}
\end{proposition}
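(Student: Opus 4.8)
The plan is to run the proof of Proposition~\ref{ouvert_platitudeSLn} again, this time with Corollary~\ref{dimfibre} furnishing all the fiber dimensions that are needed.

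First I would record the generic fiber. Since $U_N$ is the unique open $G'$-orbit of $W/\!/G$, the generic fiber of $\nu$ is $\nu^{-1}(J_N)$, whose dimension is the value of Corollary~\ref{dimfibre} at $r=N$; equivalently, as $\nu$ is surjective with $W$ and $W/\!/G$ irreducible, it equals $\dim W-\dim(W/\!/G)=nn_1+nn_2-\dim(W/\!/G)$. This is $n^2$ when $\min(n_1,n_2)>n$ (so that $W/\!/G=\Hom(V_1,V_2)^{\le n}$ has dimension $nn_1+nn_2-n^2$) and $nn_1+nn_2-n_1n_2$ when $\min(n_1,n_2)\le n$ (so that $W/\!/G=\Hom(V_1,V_2)$); the entry $n^2$ in the row $n=\max(n_1,n_2)$ is just the latter rewritten using $\max(n_1,n_2)=n$. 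This gives the middle column of the table.

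Next I would pin down the flat locus. By generic flatness (\cite[Theorem~14.4]{Ei}) $\nu$ is flat over a dense open subset of $W/\!/G$, hence over all of $U_N$ by $G'$-homogeneity, so the flat locus is a nonempty open $G'$-stable subset of $W/\!/G$; since the orbit closures form the chain $\overline{U_0}\subset\cdots\subset\overline{U_N}$, it is necessarily $U_N\cup U_{N-1}\cup\cdots\cup U_j$ for a unique $j$. The fibers of a flat morphism over an irreducible base are equidimensional, so $U_i$ can belong to the flat locus only if $\dim\nu^{-1}(J_i)=\dim\nu^{-1}(J_N)$; inserting the dimensions of Corollary~\ref{dimfibre} with $r=i$ and analysing the relevant quadratic in $r$ just as in the proof of Corollary~\ref{fibzero}, I expect the fiber dimension to be constant and equal to the generic value precisely for $r\ge\max(n_1+n_2-n-1,0)$, and strictly larger for smaller $r$. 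Hence $j=\max(n_1+n_2-n-1,0)$ when $n>\max(n_1,n_2)$, $j=N-1$ when $n=\max(n_1,n_2)$, and $j=N$ in the two remaining rows, so the flat locus is contained in the set displayed in the table. For the reverse inclusion: in the last two rows $j=N$, so flatness over $U_N$ (already obtained) suffices; in the first two rows $n\ge\max(n_1,n_2)$, hence $W/\!/G=\Hom(V_1,V_2)$ is a smooth affine space, the open subscheme $\Omega:=U_N\cup\cdots\cup U_j$ is regular, $\nu^{-1}(\Omega)$ is open in $W$ and therefore Cohen-Macaulay, and all fibers of $\nu$ over $\Omega$ are equidimensional by the previous step, so $\nu$ is flat over $\Omega$ by ``miracle flatness'' (\cite[Exercice~10.9]{Ha}).

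The main obstacle is the bookkeeping hidden in that middle step: Corollary~\ref{dimfibre} splits into five cases, two of which carry a parity-dependent correction $-\tfrac14$, and one must check that the resulting integer fiber dimensions are indeed constant on exactly the window $\{j,\dots,N\}$ and jump up immediately below it. This is a routine but somewhat tedious study of the variations of a quadratic polynomial in $r$, entirely parallel to the computation already carried out in the proof of Corollary~\ref{fibzero}.
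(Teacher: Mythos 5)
Your proposal is correct and follows essentially the same route as the paper, whose proof is exactly the one you outline (it is stated to be analogous to Proposition \ref{ouvert_platitudeSLn}): generic flatness plus $G'$-homogeneity for flatness over $U_N$, equidimensionality of the fibers of a flat morphism combined with Corollary \ref{dimfibre} to exclude the smaller orbits, and miracle flatness over the smooth quotient $\Hom(V_1,V_2)$ when $n\ge\max(n_1,n_2)$. The only caveat is that your blanket claim that the fiber dimension equals the generic value precisely for $r\ge\max(n_1+n_2-n-1,0)$ is accurate only in the first two rows of the table (in the last two rows the constancy window is just $\{N\}$, since for $r<N$ one lands in the first cases of Corollary \ref{dimfibre}), but the row-by-row values of $j$ you then assert are the correct ones, so this is a cosmetic imprecision rather than a gap.
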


The following corollary is a consequence of Propositions \ref{chow} and \ref{ouvertplatitude}.

\begin{corollary} \label{cas_facile}
The morphism $\nu$ is flat if and only if $n \geq n_1+n_2-1$, in which case $\HHH \cong W/\!/G=\Hom(V_1,V_2)$ and the Hilbert-Chow morphism $\gamma$ is an isomorphism. 
\end{corollary}

In the next proposition, we determine, for each $M \in \Irr(G)$, the multiplicity of the $G$-module $M$ in $k[{\nu}^{-1}(J_{N})]$. 

\begin{proposition} \label{fcthilb}
We denote $H:=G_{N} \cong GL_{n-N}$ the stabilizer of $w_{N}$ defined by (\ref{defwr}), $E_N$ resp. $E_N^*$, the defining representation of $H$ resp. the dual representation of $H$, and $F_1$, $F_2$ two vector spaces of dimension $n_1-N$ and $n_2-N$ respectively on which $H$ acts trivially. The Hilbert function $h_W$ of the general fibers of $\nu$ is given by:\\
$\forall M \in \Irr(G),\ h_W(M)= \left\{
    \begin{array}{ll}
        \dim(M) &\text{ if } N=n; \\
        \dim(M^{H}) &\text{ if } N=n_1=n_2<n; \\
        \dim({(M {\otimes} k[\Hom(E_{N},F_2)])}^{{H}}) &\text{ if } N=n_1<\min(n,n_2); \\
        \dim({(M {\otimes} k[\Hom(F_1,E_{N})])}^{{H}}) &\text{ if } N=n_2<\min(n,n_1). 
    \end{array}
\right.$
\end{proposition}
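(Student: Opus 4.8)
The plan is to read off the Hilbert function $h_W$ directly from the description of the generic fiber $\nu^{-1}(J_N)$ obtained in Proposition \ref{descriptiongeofib}, by computing, for each $M \in \Irr(G)$, the multiplicity $\dim\bigl(\Hom^G(M, k[\nu^{-1}(J_N)])\bigr)$ of $M$ in the coordinate ring. The key tool will be the standard identity that for a closed subgroup $H \subset G$ and any $H$-scheme $Y$ one has a $G$-equivariant isomorphism $k[G \times^H Y] \cong (k[G] \otimes k[Y])^H$, where $H$ acts on $k[G]$ by right translation; combined with the algebraic Peter--Weyl decomposition $k[G] \cong \bigoplus_{M \in \Irr(G)} M \otimes M^*$ (as a $G \times G$-module, the left copy of $G$ acting on the $M$ factor and the right copy on $M^*$), this will give $k[G \times^H Y] \cong \bigoplus_{M} M \otimes (M^* \otimes k[Y])^H$, hence $h_W(M) = \dim\bigl((M^* \otimes k[Y])^H\bigr)$.

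First I would dispose of the case $N = n$: here $\nu^{-1}(J_N) \cong G$ by Proposition \ref{descriptiongeofib}, so $k[\nu^{-1}(J_N)] \cong k[G]$ and the Peter--Weyl decomposition immediately gives $h_W(M) = \dim(M)$. In the case $N = n_1 = n_2 < n$ we have $\nu^{-1}(J_N) \cong G/H$, so $k[\nu^{-1}(J_N)] \cong k[G]^H = \bigoplus_M M \otimes (M^*)^H$, whence $h_W(M) = \dim((M^*)^H) = \dim(M^H)$, the last equality because $H$ is reductive (so $\dim$ of invariants in a module and its dual agree). In the two remaining cases $N = n_1 < \min(n,n_2)$ and $N = n_2 < \min(n,n_1)$, Proposition \ref{descriptiongeofib} gives $\nu^{-1}(J_N) \cong G \times^H \Hom(E_N, F_2)$ resp. $G \times^H \Hom(F_1, E_N)$; applying the identity above with $Y = \Hom(E_N, F_2)$ (resp. $Y = \Hom(F_1, E_N)$) yields $h_W(M) = \dim\bigl((M^* \otimes k[\Hom(E_N,F_2)])^H\bigr)$, and again replacing $M^*$ by $M$ using reductivity of $H$ gives the stated formula $\dim\bigl((M \otimes k[\Hom(E_N,F_2)])^H\bigr)$, and symmetrically in the last case.

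The one genuine subtlety — the step I expect to need the most care — is the identification $k[G \times^H Y] \cong (k[G] \otimes k[Y])^H$ together with the compatibility of the $G$-actions: one must check that the residual left-$G$-action on $G \times^H Y$ corresponds, under Peter--Weyl, to the $G$-action on the $M$-factor of $M \otimes M^*$, so that the multiplicity of $M$ comes out as $\dim\bigl((M^* \otimes k[Y])^H\bigr)$ and not, say, $\dim\bigl((M \otimes k[Y])^H\bigr)$ before the reductivity swap. This is where the conventions for the $H$-action on $G$ (right translation versus left) and on $Y$ (as specified at the end of Proposition \ref{descriptiongeofib}) have to be tracked precisely; it is routine but error-prone. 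Everything else is bookkeeping: one invokes that $H$ is reductive (it is $GL_{n-N}$) so that taking $H$-invariants is exact and commutes with the direct sum over $\Irr(G)$, and that the relevant modules are finite-dimensional (which holds since $Y$ is an affine space and $k[Y]$ is a graded $H$-module with finite-dimensional graded pieces, so $(M^* \otimes k[Y])^H$ is finite-dimensional — consistency with Proposition \ref{ouvertplatitude}, which already computed $\dim \nu^{-1}(J_N)$, can be used as a sanity check via $\sum_M h_W(M)\dim(M) = \infty$ but with the right Hilbert polynomial).
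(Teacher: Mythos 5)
Your route is exactly the paper's: read $h_W$ off the isomorphism $\nu^{-1}(J_N)\cong G\times^{H}Y$ of Proposition \ref{descriptiongeofib} via $k[G\times^{H}Y]\cong(k[G]\otimes k[Y])^{H}$ and Peter--Weyl, and your first two cases coincide with the paper's argument verbatim. The gap is precisely at the step you flagged as delicate, in the last two cases. Your bookkeeping correctly lands on $h_W(M)=\dim\bigl((M^{*}\otimes k[Y])^{H}\bigr)$ with $Y=\Hom(E_N,F_2)$ (resp.\ $\Hom(F_1,E_N)$) --- this is just Frobenius reciprocity, $\Hom^{G}(M,\mathrm{Ind}_H^G k[Y])\cong\Hom^{H}(M,k[Y])$. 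But the ``reductivity swap'' you then invoke to replace $M^{*}$ by $M$ is not valid: reductivity of $H$ gives $\dim(A^{H})=\dim((A^{*})^{H})$, and the dual of $M^{*}\otimes k[Y]$ is $M\otimes k[Y]^{\vee}$, not $M\otimes k[Y]$; here $k[Y]=S(E_N\otimes F_2^{*})$ is a sum of polynomial representations of $H\cong GL_{n-N}$ and is far from self-dual. Concretely, take $n=2$, $n_1=1$, $n_2=2$, so $H\cong\Gm$ embedded as $\mathrm{diag}(1,t)$, $Y$ has weight $-1$ and $k[Y]$ has weights $0,1,2,\dots$; for $M=V$ one finds $\dim\bigl((V\otimes k[Y])^{H}\bigr)=1$ while $\dim\bigl((V^{*}\otimes k[Y])^{H}\bigr)=2$. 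The second value is the true multiplicity: the fiber over $J_1$ is $\{(v,\phi_1,\phi_2)\in V\times V^{*}\times V^{*}\mid \phi_1(v)=1,\ \phi_2(v)=0\}$, and the two equivariant maps $(v,\phi_1,\phi_2)\mapsto\phi_1$ and $(v,\phi_1,\phi_2)\mapsto\phi_2$ already give two independent copies of $V$ in its coordinate ring, so $h_W(V)=2$. Hence the swap genuinely changes the answer and cannot be used to reconcile your formula with the displayed one.

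You should also be aware that the tension is not of your making: the paper's own proof writes $k[G\times^{H}\Hom(E_N,F_2)]\cong\bigoplus_{M}M^{*}\otimes\bigl(M\otimes k[\Hom(E_N,F_2)]\bigr)^{H}$ and then reads off $h_W(M)=\dim\bigl((M\otimes k[\Hom(E_N,F_2)])^{H}\bigr)$ without re-indexing $M\mapsto M^{*}$; in the third and fourth cases the stated formula is really computing the multiplicity of $M^{*}$, i.e.\ it should be read with $M^{*}$ in place of $M$ (equivalently, as $\dim\Hom^{H}(M,k[\Hom(E_N,F_2)])$). This twist is invisible in the first two cases, since $\dim(M)=\dim(M^{*})$ and $\dim(M^{H})=\dim(M^{*H})$, and it has no effect on the rest of the paper, which only uses Proposition \ref{fcthilb} when $N=n$. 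So the correct repair of your write-up is simply to stop at $h_W(M)=\dim\bigl((M^{*}\otimes k[Y])^{H}\bigr)$ (and say so), rather than to appeal to reductivity to force agreement with the statement as printed.
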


\begin{proof}
We use the description of the fiber of $\nu$ over $U_N$ from Proposition \ref{descriptiongeofib}.\\ 
$\bullet$ If $N=n$, then 
$$k[{\nu}^{-1}(J_{N})] \cong k[G] \cong \bigoplus_{M \in \Irr(G)} M^{*} \otimes M$$ 
as $G \times G$-modules, and thus $h_W(M)=\dim(M)$.\\
$\bullet$ If $N=n_1=n_2<n$, then 
$$k[{\nu}^{-1}(J_{N})] \cong k[G/H] \cong {k[G]}^{H} \cong \bigoplus_{M \in \Irr(G)} M^{*H} \otimes M$$ 
as a left $G$-module, and thus $h_W(M)=\dim(M^{*H})=\dim(M^H)$ as $H$ is reductive.\\
$\bullet$ If $N=n_1<\min(n_2,n)$, then
$$k[{\nu}^{-1}(J_{N})] \cong k[G {\times}^{H} \Hom(E_{N},F_2)] \cong \bigoplus_{M \in \Irr(G)} M^{*} \otimes { \left(M {\otimes} k[\Hom(E_{N},F_2)] \right)}^{{H}},$$
and thus $h_W(M)=\dim \left({ \left(M {\otimes} k[\Hom(E_{N},F_2)]\right)}^H \right)$.\\
The case $N=n_2<\min(n_1,n)$ is similar to the previous case. 
\end{proof}

\subsection{Description of the coordinate ring of the null cone}  \label{subsectionJ}

The aim of this section is to show Corollary \ref{decompoiso} that gives a description of the coordinate ring of the null cone as a $G' \times G$-module for $n_1=n_2=n$. It will be enough for our purpose to consider only this particular case because in Section \ref{reducGLn} we will obtain the reduction principle for $GL_n$ that will allow to reduce the case $n_1,n_2 \geq n$ to $n_1=n_2=n$. Our reference for the representation theory of classical groups is \cite{FH}. We fix once and for all a Borel subgroup $B' \subset G'$, a maximal torus $T' \subset B'$, and we denote $U'$ the unipotent radical of $B'$. In the same way, and with obvious notation, we fix subgroups $T,\ B$ and $U$ of $G$. 

We denote $J$ the ideal of $k[W]$ generated by the homogeneous $G$-invariants of positive degree. The ideal $J$ is $G' \times G$-stable by definition. We denote
\begin{equation} \label{defslV}
\ssl(V):=\{ f \in \End(V)\ \mid  \ \tra(f)=0 \},
\end{equation} 
where $\tra(f)$ is the trace of the endomorphism $f$. We have $V^* \otimes V \cong \End(V) \cong \ssl(V) \oplus V_0$ and
\begin{align}
{k[W]}_2 \cong &(S^2(V_1) \otimes S^2(V^{*})) \oplus (S^2(V) \otimes S^2(V_{2}^{*}))  \oplus  ({\Lambda}^2 (V_1) \otimes {\Lambda}^2 (V^{*}))  \label{kW2} \\
       &\ \oplus  ({\Lambda}^2 (V) \otimes {\Lambda}^2 (V_{2}^{*})) \oplus (V_1 \otimes V_{2}^{*} \otimes \ssl(V)) \oplus (V_1 \otimes V_{2}^{*} \otimes V_0) \notag
\end{align}
as a $G' \times G$-module. Hence $J \cap {k[W]}_2 = V_1 \otimes V_{2}^{*} \otimes V_0 \cong \Hom(V_2,V_1)$ as a $G' \times G$-module, and this module generates the ideal $J$. We recall that we fixed bases $\BBB$, $\BBB_1$ and $\BBB_2$ of $V$, $V_1$ and $V_2$ respectively. The following result is due to Kraft and Schwarz (\cite[\S 9]{KS}):

\begin{proposition} \label{KSheadings}  
With the above notation, let $x_i$, $1 \leq i \leq n$, be the $i$-th principal minor of $\Hom(V_1,V) \cong \MMM_{n,n_1}$ and $y_j$, $1 \leq j \leq n$, the $j$-th "antiprincipal" minor (that is, the minors starting from the lower right corner) of $\Hom(V,V_2)\cong \MMM_{n_2,n}$. Then the $T' \times T$-algebra ${(k[W]/J)}^{U' \times U}$ is generated by the $x_i$ and the $y_j$ and the relations between these generators are generated by the monomials $\{x_i y_j \mid  i+j>n\}$. In other words, there is an exact sequence 
$$0 \rightarrow J' \rightarrow k[x_1, \ldots,x_n,y_1, \ldots,y_n] \rightarrow {(k[W]/J)}^{U' \times U} \rightarrow 0,$$
where $J'$ is the monomial ideal generated by $\{x_i y_j\ \mid \ i+j>n\}$. 
\end{proposition}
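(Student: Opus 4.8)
The statement to prove is Proposition~\ref{KSheadings}, which describes the $U'\times U$-invariants of the coordinate ring $k[W]/J$ of the null cone, together with all relations among a natural set of generators. The plan is to deduce this from the general structure theory of highest-weight vectors and the combinatorics of the null cone worked out by Kraft--Schwarz, combined with the description of $\NNN(W,G)$ and its irreducible components obtained in Proposition~\ref{compirredfibzero}.

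\textbf{Setup and first reduction.} First I would exploit that taking $U'\times U$-invariants is an exact functor on rational $(B'\times B)$-modules (since $U'\times U$ is unipotent in characteristic zero, or more directly because highest-weight vectors split off), so the ring $R':=(k[W]/J)^{U'\times U}$ is a finitely generated $T'\times T$-algebra whose Hilbert series encodes the multiplicities of all irreducibles in $k[W]/J$. In the balanced case $n_1=n_2=n$, the module $V\cong E\oplus\cdots$ has $\End(V)$-type decompositions already recorded in \eqref{kW2}, and $J\cap k[W]_2\cong \Hom(V_2,V_1)$ generates $J$; hence $R'$ is a quotient of the polynomial ring on the $U'\times U$-highest-weight vectors in $k[W]$. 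The key point is to identify those highest-weight vectors: for $G=GL(V)$ acting on $W=\Hom(V_1,V)\oplus\Hom(V,V_2)$, the $U'\times U$-invariants of $k[W]$ itself (before quotienting by $J$) are generated by the principal minors $x_i$ of the generic matrix in $\MM_{n,n_1}$ and the antiprincipal minors $y_j$ of the generic matrix in $\MM_{n_2,n}$ — this is the classical first fundamental theorem for $GL_n$ read off on a Borel, i.e. the fact that the ring of $U$-invariants of $k[\Hom(V_1,V)]$ is a polynomial ring on the flag minors, and only the "principal" ones survive after also taking $U'$-invariants and pairing the two factors. I would cite \cite[§9]{KS} and \cite{FH} for this.

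\textbf{Identifying the relations.} Next I would determine the kernel $J'$ of $k[x_1,\dots,x_n,y_1,\dots,y_n]\twoheadrightarrow R'$. The inclusion $J'\supseteq(x_iy_j: i+j>n)$ is the easy direction: if $i+j>n$, then the product of the $i$-th principal minor of $u_1$ and the $j$-th antiprincipal minor of $u_2$ is, up to lower terms, a minor-type expression that lies in the ideal generated by the entries of $u_2u_1=\nu(u_1,u_2)$, hence vanishes on the null cone; one checks this by a direct Laplace-expansion / rank argument, observing that the relevant $i\times i$ and $j\times j$ submatrices overlap in at least $i+j-n\geq 1$ rows of $V$, forcing a factor through a product of the two maps. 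For the reverse inclusion $J'\subseteq(x_iy_j:i+j>n)$, the cleanest route is a dimension count: the monomial ring $k[x_1,\dots,x_n,y_1,\dots,y_n]/(x_iy_j: i+j>n)$ has a transparent combinatorial structure — its monomials are $x_1^{a_1}\cdots x_n^{a_n}y_1^{b_1}\cdots y_n^{b_n}$ with the support of the $x$'s and the $y$'s "staircase-compatible" ($a_i\neq 0$ and $b_j\neq 0$ forces $i+j\leq n$) — and I would compare its multigraded Hilbert series with that of $R'$, the latter being computed from the $G$-module structure of $k[\NNN(W,G)]$. Since $\NNN(W,G)$ is reduced with known irreducible components $X_i$ (Proposition~\ref{compirredfibzero}), one can compute $\dim k[\NNN(W,G)]_d$ or the relevant multiplicity-generating function and match it term by term; equality of Hilbert series plus the already-established surjection with the correct-or-larger kernel forces the kernel to be exactly the monomial ideal.

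\textbf{Main obstacle.} The delicate step is the reverse inclusion, i.e. proving there are \emph{no further relations} beyond the obvious quadratic monomials. A priori the null cone could be cut out by the vanishing of $u_2u_1$ together with extra syzygies, and on the Borel slice new relations among the $x_i,y_j$ could appear. I expect the cleanest argument is not a direct Gröbner-type calculation but the Hilbert-series comparison sketched above, using that $R'$ is Cohen--Macaulay (as $\NNN(W,G)$ is — it is reduced and a complete intersection-like variety, or one invokes \cite{KS} directly) and that a Cohen--Macaulay quotient with the right dimension and right Hilbert function of a polynomial ring by an ideal containing the monomial ideal must coincide with that monomial quotient. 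Thus the real content is bookkeeping: correctly computing $\dim \NNN(W,G)$ and the top-degree behavior, matching the staircase combinatorics of $(x_iy_j:i+j>n)$, and invoking reducedness from Proposition~\ref{compirredfibzero}. Since Kraft and Schwarz carried out exactly this computation, the honest thing is to present the statement with the two inclusions as above and attribute the matching Hilbert-function argument to \cite[§9]{KS}, checking only that our normalizations (principal vs. antiprincipal minors, the grading by $T'\times T$) are consistent with theirs.
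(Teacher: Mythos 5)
The paper offers no internal proof of this statement: it is quoted as a result of Kraft and Schwarz and simply cited as \cite[\S 9]{KS}. So your final decision to attribute the hard computation to \cite{KS} is in effect the same route the paper takes, and is legitimate. However, the sketch you wrap around that citation contains two concrete errors. First, the claim that $k[W]^{U'\times U}$ itself is generated by the $x_i$ and $y_j$ is false: taking $U$-invariants does not distribute over the product of the two factors $k[\Hom(V_1,V)]\otimes k[\Hom(V,V_2)]$, because a tensor product of two irreducible $GL(V)$-modules carries many highest-weight lines. Already in degree $2$ with $n_1=n_2=n=2$, the decomposition (\ref{kW2}) shows six irreducible $G'\times G$-summands in $k[W]_2$, hence a six-dimensional space of $U'\times U$-invariants, while there are only five degree-two monomials in $x_1,x_2,y_1,y_2$. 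The surjectivity of $k[x_1,\dots,x_n,y_1,\dots,y_n]\rightarrow (k[W]/J)^{U'\times U}$ is exactly the nontrivial content of the proposition, and "the FFT read off on a Borel" does not deliver it; it only becomes true after dividing by $J$.

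Second, your proposed argument for the absence of further relations rests on $\NNN(W,G)$ being Cohen--Macaulay (or "complete intersection-like"); this is not the case. By Proposition \ref{compirredfibzero} and Corollary \ref{fibzero}, for $n_1=n_2=n\geq 2$ the component $X_i$ has dimension $n^2+i(n-i)$, so the null cone is not equidimensional, hence neither a complete intersection nor Cohen--Macaulay, and the "CM plus Hilbert series" mechanism fails as stated. There is also a circularity issue: the multigraded Hilbert series of $(k[W]/J)^{U'\times U}$ that you would match against is essentially the content of Corollary \ref{decompoiso}, which the paper deduces \emph{from} this proposition; one must compute it independently (via the orbit structure of the null cone, which is what Kraft and Schwarz actually do). The easy half of your argument is sound: for $i+j>n$ the product $x_iy_j$ vanishes on $\NNN(W,G)$ since $\Im(u_1)\subset\Ker(u_2)$ forces $\rg(u_1)+\rg(u_2)\leq n$, and reducedness of the null cone (again from \cite{KS}) or a direct minor identity then puts $x_iy_j$ in $J$.
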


We denote $\Lambda$ the weight lattice of $G$ and $\Lambda_+ \subset \Lambda$ the subset of dominant weights. If $\lambda \in \Lambda_+$, we denote $S^{\lambda}(V)$ the irreducible $G$-module of highest weight $\lambda$. For an appropriate choice of a basis $\{ \epsilon_1,\ldots,\epsilon_n\}$ of $\Lambda$, we have $\lambda=r_1 \epsilon_1+\ldots+r_n \epsilon_n \in \Lambda_+$ if and only if $r_1 \geq r_2 \geq \ldots \geq r_n$. If $\lambda=r_1 \epsilon_1+\ldots+r_n \epsilon_n \in \Lambda_+$, we define $\lambda^*=-r_n \epsilon_1-r_{n-1} \epsilon_2-\ldots-r_1 \epsilon_n \in \Lambda_+$. We then have $S^{\lambda^*}(V) \cong S^{\lambda}(V^*)$. Furthermore, we say that $\lambda \geq 0$ if $r_n \geq 0$. One can then write any $\lambda \in \Lambda_+$ uniquely in the form $\lambda = \alpha + \beta^*$, where $\alpha,\ \beta \in \Lambda_+$ and $\alpha,\ \beta \geq 0$.

\begin{corollary} \label{decompoiso}
With the above notation, let $\lambda=\alpha+\beta^* \in \Lambda_+$. Then the isotypic component associated to the $G$-module $S^{\lambda}(V)$ in $k[W]/J$ is 
$$S^\beta(V_1) \otimes S^\alpha (V_2^*) \otimes S^\lambda(V).$$
In addition, the representation $S^{\lambda}(V)$ appears in $k[W]_p/(J \cap k[W]_p)$ if and only if $p=\sum_i |r_i|$.   
\end{corollary}

\begin{proof}
By Proposition \ref{KSheadings}, there is an isomorphism of $T' \times T$-algebras
$${(k[W]/J)}^{U' \times U} \cong k[x_1, \ldots,x_n,y_1, \ldots,y_n]/J'.$$
We denote $k_i:=|r_i|$, and let $1 \leq t \leq n$ be the integer such that $\alpha=r_1 \epsilon_1+\ldots+r_t \epsilon_t$. One may check that the weight of the monomial 
$$  x_{n-t}^{k_{t+1}} x_{n-t-1}^{k_{t+2}-k_{t+1}} x_{n-t-2}^{k_{t+3}-k_{t+2}}  \cdots x_{1}^{k_{n}-k_{n-1}} y_{t}^{k_{t}} y_{t-1}^{k_{t-1}-k_{t}} y_{t-2}^{k_{t-2}-k_{t-1}}  \cdots y_{1}^{k_{1}-k_{2}}$$ 
is $(\lambda,\beta,\alpha^*)$ and that $\lambda$ uniquely determines this monomial. It follows that the isotypic component of the $G$-module $S^{\lambda}(V)$ in $k[W]/J$ is $S^\beta(V_1) \otimes S^\alpha (V_2^*) \otimes S^\lambda(V)$.
Hence, the representation $S^{\lambda}(V)$ appears in $k[W]_p/(J \cap k[W]_p)$ if and only if 
\begin{align*}
p&=(k_n-k_{n-1})+2(k_{n-1}-k_{n-2})+\ldots+(n-t)k_{t+1}+(k_1-k_2)+2(k_2-k_3)+\ldots+t k_t\\
 &=k_1+k_2+ \ldots+k_n.
\end{align*}  
\end{proof}

\begin{remark} 
By Corollary \ref{decompoiso}, if $M$ is a polynomial representation, or the dual of a polynomial representation, then  the multiplicity of $M$ in $k[W]/J$ equals $\dim(M)$. 
\end{remark}

\subsection{The reduction principle for $GL_n$}  \label{reducGLn}

From now on, we suppose that $n_1,n_2 \geq n$. We fix $(E_1,E_2) \in \Gra(n,V_1^*) \times \Gra(n,V_2)$, and let $P$ be the stabilizer of $(E_1,E_2)$ for the natural action of $G'$ on $\Gra(n,V_1^*) \times \Gra(n,V_2)$. We denote 
$$W':=\Hom(V_1/E_1^{\perp},V) \times \Hom(V,E_2)$$ 
and 
$$\HHH':=\Hilb_{h_{W'}}^{G}(W').$$ 
We will obtain in Lemma \ref{fiberGLn} that $\HHH'$ identifies naturally with a $P$-stable closed subscheme of $\HHH$. Our aim is to show the reduction principle in Case 4 stated in the introduction. More precisely, we will show:

\begin{proposition} \label{reduction2}
With the above notation, there is a $G'$-equivariant isomorphism 
$$\begin{array}{ccccc}
\phi & : & G' {\times}^{P} \HHH' & \cong & \HHH.  \\
& & (g',[Z])P & \mapsto & g'.[Z] 
\end{array}$$
\end{proposition}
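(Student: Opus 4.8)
The plan is to mimic the proof of the reduction principle for $SL_n$ (Proposition~\ref{reduction1}), which was carried out via the Key-Proposition. First I would apply the Key-Proposition to a well-chosen $M \in \Irr(G)$ — here the natural candidates are $M = V^*$ and $M = V$, since these detect the images $\Im(u_1) \subset V$ and the vanishing loci dual to $\Ker(u_2)$, exactly the data encoded by a point of $\Gr(n,V_1^*) \times \Gr(n,V_2)$. Using the First Fundamental Theorem for $GL(V)$ (as in Lemma~\ref{exi1SLn}, via the polarization and restitution operators of \cite{Pro}), I would identify a finite-dimensional $G'$-submodule $F_V \subset k[W]_{(V)}$ generating $k[W]_{(V)}$ as a $k[W]^G$-module, and similarly $F_{V^*} \subset k[W]_{(V^*)}$; concretely $F_{V^*}$ should be spanned by the $n_1$ projections $W \to V^*$ coming from $\Hom(V_1,V)^* $, and $F_V$ by the $n_2$ projections $W \to V$ coming from $\Hom(V,V_2)$. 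By Proposition~\ref{fcthilb} we have $h_W(V) = h_W(V^*) = n$ in the range $n_1,n_2 \geq n$, so the Key-Proposition yields a $G'$-equivariant morphism
$$\rho:\ \HH \longrightarrow \Gr(n,V_1^*) \times \Gr(n,V_2),$$
and hence a $G'$-equivariant isomorphism $\HH \cong G' \times^P F$, where $F$ is the scheme-theoretic fiber of $\rho$ over the base point corresponding to $(E_1,E_2)$.

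Next I would identify the fiber $F$ with $\HH'$, as in Lemma~\ref{fibrehil}. Working with the functor of points: for a scheme $S$, a morphism $S \to F$ is a flat family $\ZZ \hookrightarrow S \times W$ with the prescribed Hilbert function such that $\rho(\ZZ_s) = (E_1,E_2)$ for all $s \in S(k)$. The condition $\rho(\ZZ_s) = (E_1,E_2)$ translates (just as in Case~1) into the vanishing on $\ZZ_s$ of the coordinate projections complementary to $E_1$ and $E_2$, i.e.\ into $\ZZ_s \subset \Hom(V_1/E_1^\perp, V) \times \Hom(V, E_2) = W'$. Therefore $\Mor(S,F) = \mathrm{Hilb}_{h_W}^G(W')(S)$, and since $h_W = h_{W'}$ — which follows from Proposition~\ref{fcthilb} applied to $W'$ (the generic fiber of $\nu$ and of the quotient morphism for $W'$ both have the same Hilbert function, because $\min(\dim(V_1/E_1^\perp),\dim E_2, n) = n$) — we get $F \cong \HH'$ as $P$-schemes. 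Assembling, $\HH \cong G' \times^P \HH'$ $G'$-equivariantly, which is Proposition~\ref{reduction2}; the inverse map sends $(g',Z)P$ to $g'.Z$ as claimed.

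The main obstacle I expect is the classical-invariant-theory computation showing that $\Hom^G(V, W^*)$ (resp. $\Hom^G(V^*, W^*)$) generates $k[W]_{(V)}$ (resp. $k[W]_{(V^*)}$) as a $k[W]^G$-module. Unlike the $SL_n$ case, the relevant FFT for $GL(V)$ produces invariants built from contractions $\phi_j(v_i)$ between the $V$-factors and the $V^*$-factors of $W$, and one must carefully track, after polarization and restitution, that exactly one uncontracted copy of $V$ (resp. $V^*$) remains and that the contracted part lands in $k[W]^G$ — this is where $n_1,n_2 \geq n$ is used to ensure no anomalous invariants appear. A secondary technical point is verifying that the fiber condition $\rho(\ZZ_s) = (E_1,E_2)$ is equivalent to $\ZZ_s \subset W'$ at the scheme-theoretic level and $P$-equivariantly; this is handled exactly as in Lemma~\ref{fibrehil} but with the product structure $\Gr(n,V_1^*) \times \Gr(n,V_2)$ and $W'$ a product, so the argument splits into two independent pieces, one for $u_1$ and one for $u_2$.
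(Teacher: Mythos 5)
Your proposal is correct and takes essentially the same route as the paper: the $GL(V)$ analogue of Lemma \ref{exi1SLn} (this is Lemma \ref{exi1}) together with the Key-Proposition applied to $M=V$ and $M=V^*$ (where $h_W(V)=h_W(V^*)=n$ by Proposition \ref{fcthilb}) yields $\rho=\delta_{V^*}\times\delta_V:\ \HH \rightarrow \Gr(n,V_1^*)\times\Gr(n,V_2)\cong G'/P$, and the scheme-theoretic fiber over $eP$ is identified with $\HH'$ by the functor-of-points argument of Lemma \ref{fibrehil}, using $h_W=h_{W'}$ (both equal $\dim$, since $N=n$ for $W$ and for $W'$). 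The only slip is notational: under the identification $k[W]_{(M)}\cong\Mor^G(W,M^*)$, the generating module of $k[W]_{(V^*)}$ is spanned by the $n_1$ projections $W\rightarrow V$ (i.e. $\Hom^G(V^*,W^*)\cong V_1$) and that of $k[W]_{(V)}$ by the $n_2$ projections $W\rightarrow V^*$ (i.e. $\Hom^G(V,W^*)\cong V_2^*$) --- you swapped the two targets, but the counts and the resulting Grassmannians are the right ones, so the argument is unaffected.
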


First, we need the following lemma, whose proof is analogous to that of Lemma \ref{exi1SLn}, in order to apply the Key-Proposition to $M=V$ and $M=V^*$.

\begin{lemma} \label{exi1}
Let $G=GL(V)$ and $W=\Hom(V_1,V) \times \Hom(V,V_2)$, with $n_1,n_2 \geq n$. Then the $k[W]^G$-module ${k[W]}_{(V)}=\Hom^G(V,k[W])$ resp. ${k[W]}_{(V^{*})}=\Hom^G(V^*,k[W])$, is generated by ${\Hom}^{G}(V,W^{*})$ resp. by ${\Hom}^{G}(V^*,W^{*})$.
\end{lemma}

We have ${\Hom}^{G}(V,W^{*}) \cong V_2^*$ and ${\Hom}^{G}(V^*,W^{*}) \cong V_1$ as $G'$-modules. By Proposition \ref{fcthilb}, we have $h_W(V)=h_W(V^*)=n$. Therefore, the Key-Proposition gives two $G'$-equivariant morphisms $\delta_{V^*}:\ \HHH \rightarrow \Gra(n,V_1^*)$ and $\delta_V:\ \HHH \rightarrow \Gra(n,V_2)$. Identifying $\Gra(n,V_1^*) \times \Gra(n,V_2) \cong G'/P$ and then taking the product of $\delta_{V^*}$ and $\delta_{V}$ gives a $G'$-equivariant morphism
\begin{equation*}
 \rho:\ \HHH \rightarrow G'/P,
\end{equation*} 
whence a $G'$-equivariant isomorphism
\begin{equation*}
\HHH \cong G' \times^P F,
\end{equation*}
where $F$ is the scheme-theoretic fiber of $\rho$ over $eP$. The next lemma, whose proof is analogous to Lemma \ref{fibrehil}, completes the proof of Proposition \ref{reduction2}.

\begin{lemma} \label{fiberGLn}
With the above notation, there is a $P$-equivariant isomorphism 
$$F \cong \HHH',$$
where $P$ acts on $\HHH'$ via its action on $W'$. 
\end{lemma}

\begin{corollary} \label{reductionHdiag}
With the above notation, there is a commutative diagram
\begin{equation*} 
 \xymatrix{
     G' {\times}^{P} \HHH' \ar[d]_{\phi} \ar[rr]^{G' \times^P \gamma'} && G' {\times}^{P} W'/\!/G \ar[d]^{\theta} \\
    \HHH \ar[rr]_{\gamma}    &&  W/\!/G 
    }
  \end{equation*}
where $\phi$ is the isomorphism of Proposition \ref{reduction2}, $\gamma':\ \HHH' \rightarrow W'/\!/G$ is the Hilbert-Chow morphism, $G' \times^P \gamma'$ is the morphism induced by $\gamma'$, and $\theta$ is the morphism induced by the inclusion $W'/\!/G \subset W/\!/G$.   
\end{corollary}

\begin{proof}
It follows from Proposition \ref{reduction2} that the diagram
\begin{equation*} 
   \xymatrix{
    \HHH'  \ar[r] \ar[d]_{\gamma'}  & \HHH \ar[d]^{\gamma} \\
    W'/\!/G \ar[r]_{\theta} & W/\!/G
  }  
\end{equation*}
is commutative, where the upper horizontal arrow is the closed embedding of Lemma \ref{fiberGLn}. Then the following diagram
\begin{equation*} 
 \xymatrix{
     G' {\times}^{P} \HHH' \ar[d]_{G' \times^P \gamma'} \ar[r] & G' {\times}^{P} \HHH \ar[d]^{G' \times^P \gamma} \ar[r]^{\cong} & G'/P \times \HHH \ar[d]^{Id \times \gamma} \\
     G' {\times}^{P} W'/\!/G  \ar[r] & G' {\times}^{P} W/\!/G  \ar[r]_(0.4){\cong} & G'/P \times W/\!/G  }
  \end{equation*}   
is also commutative, hence the result. 
\end{proof}

\subsection{The case $n_1=n_2=2$}  \label{etudeGL2}

The aim of this section is to determine $\HHH$ in Case 4 for $n_1=n_2=n=2$. Next, in Section \ref{masterpropositioncasn2}, we will use the reduction principle for $GL_n$ to deduce the case where $n_1,n_2 \geq n=2$.

\subsubsection{Fixed-points of $\HHH$ for the action of $B'$}  \label{subsectionptfixe}

First, we want to show that $\HHH$ is a smooth variety. We recall that we fixed a Borel subgroup $B' \subset G'$. By Lemma \ref{Hlisse4}, it is enough to show that the fixed-points of $\HHH$ for the action of $B'$ are contained in the main component $\HHHp$; and then to check that the dimension of the tangent space to $\HHH$ in each of those fixed-points equals the dimension of $\HHHp$. Hence, we have to determine the fixed-points of $B'$ in $\HHH$. 

We denote $D$ the unique $B'$-stable line of $V_1 \otimes V_{2}^{*}$, and $I$ the ideal of $k[W]$ generated by $(V_1 \otimes V_{2}^{*} \otimes V_0) \oplus (D \otimes \ssl(V)) \subset {k[W]}_2$. The ideal $I$ is homogeneous, $B' \times G$-stable and contains the ideal $J$ generated by the homogeneous $G$-invariants of positive degree in $k[W]$.

\begin{theorem} \label{pointfixeborel}
The ideal $I$ defined above is the unique fixed-point of $\HHH$ for the action of the Borel subgroup $B'$.
\end{theorem}

\begin{proof}
Let $[Z]$ be a closed point of $\HHH$, and $I_Z \subset k[W]$ the ideal of the $G$-stable closed subscheme $Z \subset W$. The point $[Z]$ is fixed for the action of $B'$ if and only if $I_Z$ is $B'$-stable. In particular, $I_Z$ is stable by the group of invertible scalar matrices, and thus $I_Z$ has to be homogeneous.  
Hence, the fixed-points of $\HHH$ for the action of $B'$ correspond exactly to the homogeneous ideals $I_Z$ of $k[W]$ such that:\\
\indent i) $I_Z$ is $B' \times G$-stable; and\\
\indent ii) $k[W]/ I_Z \cong \bigoplus_{M \in \Irr(G)} M^{\oplus \dim(M)}$ as a $G$-module.\\
As the algebra $k[W]$ is graded and as the ideal $I_Z$ is homogeneous, the algebra $k[W]/I_Z$ is also graded:
\begin{equation*}
k[W]/I_Z= \bigoplus_{p \geq 0} {k[W]}_p/(I_Z \cap {k[W]}_p).
\end{equation*} 
We can thus study $I_Z$ degree by degree. It is clear that ${k[W]}_0 \cap I_Z=\{0\}$ and that ${k[W]}_1 \cap I_Z \neq k[W]_1$. Let us now study the component of degree 2 using the decomposition (\ref{kW2}). To get the decomposition given by ii), we must have ${k[W]}_2 \cap I_Z \supseteq \ssl(V) \oplus V_0^{\oplus 4}$. Indeed, the $G$-module $k[W]/I_Z$ already contains a copy of the trivial representation given by the constants, hence $k[W]/I_Z$ cannot contain any other copy. Then, ${k[W]}_2$ contains four copies of $\ssl(V)$ which is a 3-dimensional $G$-module, hence ${k[W]}_2 \cap I_Z$ contains at least one copy of $\ssl(V)$. As $k[W]_2 \cap I_Z$ is $B'$-stable, $k[W]_2 \cap I_Z$ contains $D \otimes \ssl(V)$ because $D$ is the unique $B'$-stable line of $V_1 \otimes V_{2}^{*}$. It follows that $I_Z$ contains $(V_1 \otimes V_{2}^{*} \otimes V_0) \oplus (D \otimes \ssl(V))$, and therefore $I_Z \supset I$.  
The next lemma implies that this inclusion is in fact an equality, which completes the proof of Theorem \ref{pointfixeborel}. 

\begin{lemma} \label{l3generators}
The Hilbert function of the ideal $I$ defined aboved is $h_W$, the Hilbert function of the general fibers of $\nu:\ W \rightarrow W/\!/G$.
\end{lemma}

\textit{Proof of the lemma:}
We recall that, for each $M \in \Irr(G)$, we have $h_W(M)=\dim(M)$, and thus we have to show that
$$k[W]/I \cong \bigoplus_{M \in \Irr(G)} M^{\oplus \dim(M)}$$  
as a $G$-module. There is an inclusion of ideals $J \subset I$, whence the exact sequence of $B' \times G$-modules
$$ 0 \rightarrow I/J \rightarrow k[W]/J \rightarrow k[W]/I  \rightarrow 0, $$ 
hence 
$$k[W]/I \cong \frac{k[W]/J}{I/J}$$ 
as $B' \times G$-modules. 
Corollary \ref{decompoiso} provides the decomposition of $k[W]/J$ into irreducible $G' \times G$-modules. If $M$ is a polynomial $G$-module or its dual, then the multiplicity of $M$ in $k[W]/J$ is $\dim(M)$. Otherwise, $M = S^{k_1{\epsilon}_1-k_2 {\epsilon}_2}(V)$ for a unique couple $k_1,k_2 >0$ and the multiplicity of $S^{k_1{\epsilon}_1-k_2 {\epsilon}_2}(V)$ in $k[W]/J$ equals $\dim(S^{k_2}(V_1) \otimes S^{k_1}(V_{2}^{*}))={(k_1+1)(k_2+1)}$.\\ 
Recalling that we identify $W=\Hom(V_1,V) \times \Hom(V,V_2)$ with $\MMM_{2,2} \times \MMM_{2,2}$, we write 
\begin{equation*} 
w=\left( \begin{bmatrix}
x_{11}  &x_{12} \\
x_{21}  & x_{22} 
\end{bmatrix},
\begin{bmatrix}
 y_{22} & y_{12} \\
 y_{21} & y_{11}
\end{bmatrix} \right) \in W,
\end{equation*}
and we identify $k[W]$ with the algebra $k[x_{ij},y_{ij},\ 1 \leq i,j \leq 2]$. By Proposition \ref{KSheadings}, we have 
$${(k[W]/J)}^U \cong k[x_{11},x_{12},x_{11} x_{22}-x_{21} x_{12},y_{11},y_{12},y_{11} y_{22}-y_{21} y_{12}]/K,$$
where
\begin{align*} 
K=(&x_{11} (y_{11} y_{22}-y_{21} y_{12}), x_{12} (y_{11} y_{22}-y_{21} y_{12}), y_{11} (x_{11} x_{22}-x_{21} x_{12}),\\
   &y_{12} (x_{11} x_{22}-x_{21} x_{12}),(x_{11} x_{22}-x_{21} x_{12})(y_{11} y_{22}-y_{21} y_{12})).
\end{align*}
The ideal ${(I/J)}^U$ of ${(k[W]/J)}^U$ is generated by $x_{11}y_{11}$. Hence     
$$I/J \cong \bigoplus_{k_1,k_2 >0} {\left(D.\left(S^{k_2-1}(V_1) \otimes S^{k_1-1}(V_{2}^{*})\right)\right)} \otimes S^{k_1{\epsilon}_1-k_2 {\epsilon}_2}(V)$$
as a $B' \times G$-module. It follows that
\begin{align*} 
k[W]/I \cong &V_0 \oplus {k[V_{1}^{*} \otimes V]}_{+} \oplus {k[V^{*} \otimes V_2]}_{+} \\
           &\oplus  \left(\bigoplus_{k_1,k_2 >0 } \frac{S^{k_2}(V_1) \otimes S^{k_1}(V_{2}^{*})}{(D.(S^{k_2-1}(V_1) \otimes S^{k_1-1}(V_{2}^{*})))} \otimes S^{k_1{\epsilon}_1-k_2 {\epsilon}_2}(V)\right)
\end{align*} 
as a $B' \times G$-module, where ${k[V_{1}^{*} \otimes V]}_{+}$ resp. ${k[V^{*} \otimes V_2]}_{+}$, denotes the maximal homogeneous ideal of $k[V_{1}^{*} \otimes V]$ resp. of $k[V^{*} \otimes V_2]$. \\
We notice that the multiplicities of the polynomial representations in $k[W]/I$, and those of their duals, are the same as in $k[W]/J$. However, the multiplicity of 
$S^{k_1{\epsilon}_1-k_2 {\epsilon}_2}(V)$ in $k[W]/I$ is
$$\dim \left(\frac{S^{k_2}(V_1) \otimes S^{k_1}(V_{2}^{*})}{(D.(S^{k_2-1}(V_1) \otimes S^{k_1-1}(V_{2}^{*})))}\right)=(k_1+1)(k_2+1)-k_1 k_2=\dim(S^{k_1{\epsilon}_1-k_2 {\epsilon}_2}(V)).$$ 
\end{proof}

\begin{remark}
We have $\Stab_{G'}(I)=B'$, hence the unique closed orbit of $\HHH$ is isomorphic to $G'/B' \cong \PPr^1 \times \PPr^1$.
\end{remark}

The next corollary is a direct consequence of Lemma \ref{fixespoints} and Theorem \ref{pointfixeborel}.

\begin{corollary} \label{Hconnexe}
The scheme $\HHH$ is connected.
\end{corollary}

\subsubsection{Tangent space to $\HHH$ at $[Z_0]$} \label{dimTang}
Let $[Z_0]$ be the unique fixed point of $\HHH$ for the action of $B'$. We will show:

\begin{proposition} \label{dimTangent}
The dimension of the Zariski tangent space to $\HHH$ at the unique fixed-point $[Z_0]$ for the action of the Borel subgroup $B'$ is $\dim(T_{[Z_0]} \HHH)=4$.
\end{proposition}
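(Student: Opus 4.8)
The plan is to compute $T_{Z_0}\HH$ using the canonical isomorphism $T_{Z_0}\HH \cong \Hom_R^G(I/I^2, R)$ from Proposition \ref{isoTangent}, where $R:=k[W]/I$ and $I$ is the ideal described in Theorem \ref{pointfixeborel}. Since $I$ is $B'\times G$-stable, all the relevant modules carry a $B'$-action and the computation can be organized $B'$-equivariantly, which will help keep track of multiplicities. From Section \ref{subsectionJ} we already know the decomposition of $R=k[W]/I$ as a $B'\times G$-module: it is $k[G]$ as a $G$-module, with the isotypic component of $S^{k_1\epsilon_1-k_2\epsilon_2}(V)$ carried by an explicit $B'$-module quotient of $S^{k_2}(V_1)\otimes S^{k_1}(V_2^*)$, and the polynomial (and dual-polynomial) pieces appearing with full multiplicity. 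The key input is the presentation of $I$: by construction $I$ is generated by $N:=(V_1\otimes V_2^*\otimes V_0)\oplus(D\otimes sl(V))\subset k[W]_2$, a $B'\times G$-module of small dimension (here $4 + 3 = 7$ over $k$, but $sl(V)$ is $3$-dimensional as a $G$-module so the generating $G$-module is $V_1\otimes V_2^*\oplus D\otimes sl(V)$).

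First I would set up the exact sequence (\ref{complexeCornFlakes}): $R\otimes\RRR \xrightarrow{\rho} R\otimes N \xrightarrow{\delta} I/I^2 \to 0$, where $N$ is the degree-$2$ generating module above and $\RRR$ is a $B'\times G$-module of relations among the generators of $I$, i.e. a module surjecting onto the degree-$3$ part of the kernel of $k[W]\otimes N \to I$ (plus possibly higher-degree Koszul-type syzygies, though by Lemma \ref{InegTang} only the rank of $\rho^*$ matters). By Lemma \ref{InegTang}, since $R\cong k[G]$ as a $G$-module, we get $\dim(T_{Z_0}\HH) = \dim(N) - \rg(\rho^*)$, where now $\dim(N)$ means the dimension of the generating $G$-module appearing in $\Hom^G(N,R)$ — concretely $\dim \Hom^G(N,R)$ computed via $R\cong k[G]$, which gives $\dim \Hom^G(N,R) = \dim(N^*)$ summed appropriately; I would compute $\Hom^G(N,R)$ directly: $N = (V_1\otimes V_2^*\otimes V_0)\oplus(D\otimes sl(V))$, so as a $G$-module $N \cong V_0^{\oplus 4}\oplus sl(V)$, and $\Hom^G(N,k[G]) \cong (N^*)^{\oplus(\dim)}$... more precisely $\dim\Hom^G(V_0,k[G]) = 1$ and $\dim\Hom^G(sl(V),k[G]) = \dim(sl(V)) = 3$, giving $\dim\Hom^G(N,R) = 4\cdot 1 + 1\cdot 3 = 7$. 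Then the task reduces to showing $\rg(\rho^*) = 3$, i.e. that the relation module contributes exactly $3$ dimensions worth of constraints, so that $\dim T_{Z_0}\HH = 7 - 3 = 4$.

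The concrete computation I would carry out: identify $I/I^2$ explicitly. Write $I = (q_{ij}, d\otimes s)$ where the $q_{ij}$ span $V_1\otimes V_2^*\otimes V_0$ (the entries of $u_2\circ u_1$, up to the trace these are the $G$-invariants, and together with $D\otimes sl(V)$ they generate $I$ — note $V_1\otimes V_2^*\otimes V_0$ together with $D\otimes sl(V)$ span, via the $B'$-span, all of $V_1\otimes V_2^*\otimes(sl(V)\oplus V_0)$ only after applying $B'$, but $I$ itself is the $G$-ideal generated by these specific $B'$-submodules). I would then compute, degree by degree using the explicit generators $x_{ij},y_{ij}$ and the relations $K$ from the proof of Lemma \ref{l3generators}, the module $I/I^2$ and the map $\delta^*:\Hom_R^G(I/I^2,R)\hookrightarrow \Hom^G(N,R)$, or equivalently compute $\Ker(\rho^*)$. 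The $B'$-equivariance is the main bookkeeping device: I would decompose everything into $T'$-weight spaces and identify which of the $7$ basis morphisms of $\Hom^G(N,R)$ extend to $R$-linear $G$-morphisms on $I/I^2$. I expect exactly the $4$ morphisms coming from the $V_0^{\oplus 4}$-summand (corresponding to the $G$-invariant generators) plus possibly one more, or more likely the $4$ "constant" directions survive and the $3$ directions from $sl(V)$ are obstructed, giving $4$.

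The hard part will be correctly identifying the relation module $\RRR$ and the rank of $\rho^*$ — equivalently, verifying that exactly $3$ of the $7$ candidate first-order deformations fail to lift, i.e. are obstructed by the degree-$3$ (or higher) relations in $I$. This requires a careful, fully explicit computation with the generators and syzygies of $I$, using the presentation of $(k[W]/J)^U$ from Proposition \ref{KSheadings} and the description of $I/J$ from Lemma \ref{l3generators}; the $B'$-action reduces the size of the linear algebra but one still has to track the $sl(V)$-isotypic parts through $I/I^2$ and check $R$-linearity. An alternative route, if the direct syzygy computation is too involved, would be to exhibit $4$ explicit deformations (e.g. the family over $\YY_0 = \OO_{\PP(\Hom(V_1,V_2)^{\leq 2})}(-1)$, whose tangent space at the point over $Z_0$ is $4$-dimensional since $\dim\YY_0 = \dim\Hom(V_1,V_2)^{\leq 2} = 2\cdot2+2\cdot2-2^2 = 4$) to get $\dim T_{Z_0}\HH \geq 4$, and then prove $\dim T_{Z_0}\HH \leq 4$ via the $\rg(\rho^*)\geq 3$ inequality from Lemma \ref{InegTang}; the lower bound from the main component is automatic once we know $Z_0\in\HHp$ (which follows from Lemma \ref{fixespoints} and Corollary \ref{Hconnexe}), so really only the upper bound $\leq 4$, i.e. $\rg(\rho^*)\geq 3$, needs genuine work.
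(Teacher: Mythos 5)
Your framework is exactly the paper's: use Proposition \ref{isoTangent} and the presentation (\ref{complexeCornFlakes}), note that $R\cong k[G]$ as a $G$-module (Lemma \ref{l3generators}), so Lemma \ref{InegTang} gives $\dim T_{Z_0}\HH=7-\rg(\rho^*)$, get the lower bound $\dim T_{Z_0}\HH\geq\dim\HHp=4$ from $Z_0\in\HHp$ via Lemma \ref{fixespoints}, and reduce everything to the inequality $\rg(\rho^*)\geq 3$. Your multiplicity count $\dim\Hom^G(N,R)=4\cdot 1+3=7$ is correct. But the proof has a genuine gap where the paper does its real work: you never establish $\rg(\rho^*)\geq 3$. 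You explicitly defer it ("the hard part will be correctly identifying the relation module $\RRR$ and the rank of $\rho^*$\dots requires a careful, fully explicit computation"), whereas this is the entire content of the proposition once the formal reductions are in place. The paper does it by writing the seven generators explicitly in coordinates ($f_1,\dots,f_4$ the entries of $u_2\circ u_1$, and $h_1,\dots,h_4$ spanning $D\otimes V^*\otimes V$, with the linear relation $f_3=h_2+h_3$ accounting for $\dim N=7$), exhibiting three explicit syzygies such as $r_1=-f_1\otimes y_{21}+h_2\otimes y_{22}+h_4\otimes y_{12}$, and then checking that the evaluations of $\rho^*(\psi_1),\rho^*(\psi_3),\rho^*(\psi_4)$ at $r_1\otimes 1,r_2\otimes 1,r_3\otimes 1$ force any vanishing linear combination to be trivial. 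Without producing such syzygies and performing that evaluation, the value $4$ is not proved; an a priori bound on $\rg(\rho^*)$ does not follow from the $B'$-equivariant bookkeeping alone.

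A secondary point: your heuristic guess that "the $4$ morphisms coming from the $V_0^{\oplus 4}$-summand survive and the $3$ directions from $sl(V)$ are obstructed" is the opposite of what the computation shows. The three morphisms whose images under $\rho^*$ are proved independent are $\psi_1,\psi_3,\psi_4$, which are dual to invariant generators $f_i$ and vanish on the $h_j$; so at most one of the four "invariant-dual" directions lies in $\Ker(\rho^*)=T_{Z_0}\HH$, and the kernel mixes the two isotypic blocks. This does not affect the logic of your plan, but it signals that the identification of which first-order deformations lift cannot be guessed from the $G$-module decomposition and genuinely requires the syzygy computation you left out.
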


We identify $k[W]$ with the algebra $k[x_{ij},y_{ij},\ 1 \leq i,j \leq 2]$ as in the proof of Lemma \ref{l3generators} and we fix explicit bases for some $B' \times G$-modules appearing in $k[W]_2$: \\

 $\left.
    \begin{array}{l}
      f_1:=y_{22} x_{11}+y_{12} x_{21}\\
      f_2:=y_{22} x_{12}+y_{12} x_{22}\\
      f_3:=y_{21} x_{11}+y_{11} x_{21}\\
      f_4:=y_{21} x_{12}+y_{11} x_{22}
      \end{array}
\right \}  \text{ is a basis of } V_1 \otimes V_2^* \otimes V_0; $\\

$\left.
    \begin{array}{l}
         h_1:=x_{11} y_{11}\\
         h_2:=x_{11} y_{21}\\  
         h_3:=x_{21} y_{11}\\ 
         h_4:=x_{21} y_{21}
      \end{array}
\right \} \text{ is a basis of }  D \otimes V^* \otimes V \cong D \otimes (\ssl(V) \oplus V_0).$\\

The ideal $I$ defined at the beginning of Section \ref{subsectionptfixe} is the ideal of the closed subscheme $Z_0 \subset W$, and we denote $R:=k[W]/I$ the algebra of global sections of the structure sheaf of $Z_0$. We consider
\begin{equation*}
N_1:=\left\langle  f_1,f_2,f_3,f_4,h_1,h_2,h_3,h_4 \right\rangle \subset k[W],
\end{equation*}
which is a $B' \times G$-submodule that generates the ideal $I$. One can check that the generators of $I$ satisfy the following relations: 
\begin{equation*}
 \left \{
    \begin{array}{l}
   r_1:=-f_1 \otimes y_{21}+h_2 \otimes y_{22}+ h_4 \otimes y_{12};\\
   r_2:=-f_3 \otimes x_{11}+h_1\otimes x_{21}+ h_2 \otimes x_{11}; \\
   r_3:=f_4 \otimes x_{11}-h_1 \otimes x_{22}- h_2\otimes x_{12}. 
      \end{array}
      \right.
 \end{equation*}
By Lemma \ref{fixespoints}, the variety $\HHHp$ contains at least one fixed-point for the action of $B'$, and thus $[Z_0] \in \HHHp$. It follows that $\dim(T_{[Z_0]} \HHH) \geq \dim(\HHHp)=4$. On the other hand, by Lemma \ref{InegTang}, we have $\dim(T_{[Z_0]} \HHH)=7-\rank(\rho^*)$, where $\rho^*$ is the morphism of Diagram (\ref{complexeCornFlakes2}). Hence, to show Proposition \ref{dimTangent}, it is enough to prove:

\begin{lemma} 
With the above notation, we have $\rank(\rho^*) \geq 3.$
\end{lemma}

\begin{proof}
For $i=1, \ldots ,4$, we define ${\psi}_i \in {\Hom}_R^G(R \otimes N_1,R)$ by: 
$$\left\{
    \begin{array}{ll}
        {\psi}_i( h_j \otimes 1)=0 &\text{ for $j=1,\ldots,4$}; \\
        {\psi}_i(f_j \otimes 1)={\delta}_i^j &\text{ for $j=1,\ldots,4$};
    \end{array}
\right.
$$
where ${\delta}_i^j$ is the Kronecker symbol whose value is $1$ if $i=j$, and $0$ otherwise. The ${\psi}_i$ are linearly independent in $\Hom_R^G(R {\otimes} N_1,R)$, and we are going to show that $\rho^*({\psi}_1)$, $\rho^*({\psi}_3)$ and $\rho^*({\psi}_4)$ are linearly independent in $\Ima(\rho^*)$, which will prove the lemma. Let ${\lambda}_1$, ${\lambda}_3$, ${\lambda}_4 \in k$ such that 
\begin{equation} \label{relLinaire1}
\sum_{i=1,3,4} {\lambda}_i \, \rho^*({\psi}_i)=0.
\end{equation}
Evaluating (\ref{relLinaire1}) at $r_1 \otimes 1$, we get
$$0={\sum}_{i=1,3,4} {\lambda}_i \, \rho^*({\psi}_i)(r_1 \otimes 1)= {\sum}_{i=1,3,4} {\lambda}_i \, {\psi}_i(r_1) = -{\lambda}_1 \, y_{21}.$$
Then, evaluating (\ref{relLinaire1}) at $r_2\otimes 1$ and $r_3\otimes 1$, we get
$-{\lambda}_3 \, {x_{11}}={\lambda}_4 \, x_{11}=0.$\\
We deduce that $({\lambda}_1, {\lambda}_3, {\lambda}_4)=(0,0,0)$, hence the result.  
\end{proof}

The next corollary follows from Lemma \ref{Hlisse4} and Proposition \ref{dimTangent}.

\begin{corollary} \label{Hcasn2}
$\HHH=\HHHp$ is a smooth variety. 
\end{corollary}

\subsubsection{Construction of a $G'$-equivariant morphism $\delta:\ \HHH \rightarrow\PPr(\Hom(V_1,V_2))$}  \label{subsectionconstructionmor}  

The next lemma follows from classical invariant theory and can be shown in the same way as Lemma \ref{exi1SLn}.

\begin{lemma} \label{lemmeThClass2}
Let $G=GL(V)$ and $W=\Hom(V_1,V) \times \Hom(V,V_2)$, with $n_1=n_2=n=2$. Then the ${k[W]}^G$-module ${k[W]}_{(\ssl(V))}=\Hom^G(\ssl(V),k[W])$ is generated by ${\Hom}^{G}(\ssl(V),k[W]_2)$, where the $G$-module $\ssl(V)$ is defined by (\ref{defslV}). 
\end{lemma}

There is an isomorphism of $G'$-modules
$${\Hom}^{G}(\ssl(V),k[W]_2) \cong V_1 \otimes V_2^*.$$
Therefore, the Key-Proposition gives a $G'$-equivariant morphism $\HHH \rightarrow \PPr(V_1 \otimes V_2^*)$. But $V_1 \otimes V_{2}^{*} \cong \det(V_1) \otimes V_{1}^{*} \otimes V_{2} \otimes {\det}^{-1}(V_2)$ as a $G'$-module, and thus
\begin{equation*} 
\PPr (V_1 \otimes V_{2}^{*}) \cong \PPr (\det(V_1) \otimes V_{1}^{*} \otimes V_{2} \otimes {\det}^{-1}(V_2)) \cong \PPr (V_{1}^{*} \otimes V_{2}) \cong \PPr (\Hom(V_1,V_2)) 
\end{equation*}
as $G'$-varieties. Hence, we get a $G'$-equivariant morphism
\begin{equation} \label{maoprhismedelta}
\delta :\ \HHH \rightarrow  \PPr (\Hom(V_1,V_2)). 
\end{equation}

\subsubsection{The morphism $\gamma \times \delta$ is an isomorphism between $\HHH$ and $\YYY_0$}  \label{subsectionison2}

We recall that we defined $\YYY_0$ at the beginning of Section \ref{yy}. The action of $G'$ on $\Hom(V_1,V_2)$ induces an action of $G'$ on $\Hom(V_1,V_2) \times \PPr(\Hom(V_1,V_2))$ that preserves $\YYY_0$. Proceeding as for Lemma \ref{morphisme_dans_YSLn}, one may check that the morphism $\gamma \times \delta$ sends $\HHH$ into $\YYY_0$.

\begin{lemma} \label{bijens}
Let $\delta$ be the $G'$-equivariant morphism defined by (\ref{maoprhismedelta}), and $\YYY_0$ be the variety defined at the beginning of Section \ref{yy}. Then, the morphism $\gamma \times \delta:\ {\HHH} \rightarrow \YYY_0$ is quasi-finite.
\end{lemma}

\begin{proof}                                                                                                                                                      As before, we denote $[Z_0] \in \HHH$ the unique fixed-point for the action of the Borel subgroup $B'$. Let $Y_0:=(\gamma \times \delta)([Z_0])$, and let $[Z] \in \HHH$ such that $(\gamma \times \delta)([Z])=Y_0$. We have $\gamma([Z])=0$, hence the ideal $I_Z \subset k[W]$ of $Z$ contains $V_1 \otimes V_2^* \otimes V_0$; and we have $\delta([Z])=\delta([Z_0])$, hence $I_Z$ contains $D \otimes \ssl(V) \subset k[W]_2$, where $D$ is the unique $B'$-stable line of $V_1 \otimes V_2^*$. By Lemma \ref{l3generators}, we have $Z=Z_0$, and thus $(\gamma \times \delta)^{-1}(Y_0)$ is a finite set.
Then, we consider
$$E:=\{[Z] \in \HHH \text{ such that the fiber over } Y:=({\gamma \times \delta})([Z]) \text{ is of dimension } \geq 1  \},$$
which is a $G'$-stable closed subset of $\HHH$ by \cite[Exercise II.3.22]{Ha}. By Lemma \ref{fixespoints}, if $E$ is non-empty then $E$ has to contain $[Z_0]$, but we just have seen that this is not the case, hence the result.  
\end{proof}

\begin{proposition} \label{gammaiso}
With the notation of Lemma \ref{bijens}, $\gamma \times \delta:\ \HHH  \rightarrow  \YYY_0$ is an isomorphism.
\end{proposition}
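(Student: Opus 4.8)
The plan is to combine the results already established: $\HH$ is a smooth, hence normal, variety (Corollary \ref{Hcasn2}), the target $\YY_0 = \OO_{\PP(\Hom(V_1,V_2)^{\leq 2})}(-1)$ is also smooth, and $\gamma \times \delta$ is a $G'$-equivariant quasi-finite morphism (Lemma \ref{bijens}) which moreover is projective (since $\gamma$ is projective and $\delta$ has projective target, the product is projective over $W/\!/G$, and $\YY_0 \to W/\!/G$ is projective). A projective quasi-finite morphism is finite, so $\gamma \times \delta$ is finite. To upgrade ``finite'' to ``isomorphism'' I would invoke Zariski's main theorem in the form: a finite birational morphism onto a normal variety is an isomorphism. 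Thus it suffices to check that $\gamma \times \delta$ is birational and that $\YY_0$ is normal; the latter holds because $\YY_0$ is smooth, being the total space of a line bundle over the smooth variety $\PP(\Hom(V_1,V_2)^{\leq 2})$ (a projectivized determinantal variety, which for $2\times 2$ matrices of rank $\leq 2$ is simply $\PP(\Hom(V_1,V_2)) = \PP^3$, hence smooth).

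For birationality, I would use the flat locus. By Proposition \ref{ouvertplatitude} (or Corollary \ref{cas_facile}), $\nu$ is flat over the open orbit $U_2 \subset W/\!/G$, so by Proposition \ref{chow} the Hilbert-Chow morphism $\gamma$ restricts to an isomorphism $\gamma^{-1}(U_2) \xrightarrow{\sim} U_2$. On the other hand, over $U_2$ the projection $p_1:\ \YY_0 \to W/\!/G$ is an isomorphism onto $U_2$ as well, since $U_2$ is disjoint from the center $\{0\}$ of the blow-up. It remains to see that, over $U_2$, the section $\delta$ agrees with the section of $p_2$ forced by $p_1$: for $Z \in \gamma^{-1}(U_2)$ with $\gamma(Z) = f$ of rank $2$, one checks from the explicit description of $\delta$ in terms of the generator of $\Hom^G(sl(V),k[W]_2) \cong V_1 \otimes V_2^*$ that $\delta(Z) = [f] \in \PP(\Hom(V_1,V_2))$, so that $(\gamma\times\delta)(Z) = (f,[f]) \in \YY_0$. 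Hence $\gamma \times \delta$ restricts to an isomorphism over $U_2$, and since $\gamma^{-1}(U_2)$ is dense in $\HH$ (because $\HH = \HHp = \overline{\gamma^{-1}(U_2)}$ by Corollary \ref{Hcasn2}) and $p_1^{-1}(U_2)$ is dense in $\YY_0$, the morphism $\gamma \times \delta$ is birational.

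The main obstacle I anticipate is the verification that $\delta$ coincides over $U_2$ with the tautological section, i.e. that for a closed subscheme $Z$ with $k[Z] \cong k[G]$ lying over a rank-$2$ matrix $f$, the line $\delta(Z)$ is exactly the line spanned by $f$. This requires unwinding the set-theoretic description of $\delta_M$ from equation (\ref{descirption_explicite}) applied to $M = sl(V)$: one must identify, inside $F_{sl(V)} = \Hom^G(sl(V),k[W]_2) \cong V_1 \otimes V_2^*$, the kernel of the restriction map $q \mapsto q|_Z$ with the hyperplane of $\Hom(V_2,V_1)$ annihilated by $f$, equivalently that the image line in $(V_1 \otimes V_2^*)^* \cong \Hom(V_2,V_1)^* \cong \Hom(V_1,V_2)$ is $[f]$. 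Concretely, using the matrix coordinates from the proof of Lemma \ref{l3generators}, the degree-$2$ invariant-free part $sl(V) \otimes (V_1\otimes V_2^*)$ is spanned by entries of the product $u_2 \circ u_1$ with the trace removed; restricting to the fiber $\nu^{-1}(f) \cong G$ (where $u_2 u_1 = f$) pins down the line to $[f]$. This is a direct but slightly delicate computation; once it is done, everything else is formal. Finally, having shown $\gamma \times \delta$ is finite, birational, and has normal (smooth) target, Zariski's main theorem gives that it is an isomorphism, and composing with $p_1$ identifies $\gamma$ with the blow-up of $\Hom(V_1,V_2)$ at $0$, as claimed in Theorem \ref{GL2}.
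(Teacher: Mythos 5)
Your proposal follows essentially the same route as the paper: the paper also deduces the statement from Zariski's Main Theorem, using that $\gamma \times \delta$ is proper, quasi-finite (Lemma \ref{bijens}) and birational onto the smooth, hence normal, variety $\YY_0$; your extra justifications of properness and of birationality over the open orbit are correct. The only point worth noting is that the ``delicate computation'' you anticipate is not needed: since it has already been checked that $(\gamma \times \delta)(\HH) \subset \YY_0$, the defining condition $f \in L$ forces $\delta(Z)=[\gamma(Z)]$ as soon as $\gamma(Z)\neq 0$, so birationality over $U_2$ is formal (and, incidentally, $\Ker(f_Z)$ is a line rather than a hyperplane of $V_1\otimes V_2^*$, since $\dim F_{sl(V)}-h_W(sl(V))=4-3=1$, even though your final claim $\delta(Z)=[f]$ is correct).
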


\begin{proof}
The variety $ \YYY_0$ is smooth, hence normal, and $\gamma \times \delta:\ {\HHH} \rightarrow  \YYY_0$ is birational, proper and quasi-finite by Lemma \ref{bijens}. Therefore, by Zariski's Main Theorem, $\gamma \times \delta$ is an isomorphism.  
\end{proof}

\subsection{The case $n_1,n_2 \geq 2$}     \label{masterpropositioncasn2}

In this section, we suppose that $n_1,n_2 \geq n=2$, and thus $W/\!/G=\Hom(V_1,V_2)^{\leq 2}$. Our aim is to treat the last case of Theorem \ref{GL2} using the reduction principle for $GL_2$.

First of all, we introduce some notation: For any $L \in \PPr(\Hom(V_1,V_2)^{\leq 2})$, we denote $\Ker(L):=\Ker(l)$, $\Ima(L):=\Ima(l)$ and $\rank(L):=\rank(l)$, where $l$ is any nonzero element of $L$.\\
If $\rank(L)=2$, we denote $L \wedge L:=\left\langle l \wedge l \right\rangle \in \PPr(\Hom(\Lambda^2 (V_1),\Lambda^2 (V_2))^{\leq 1})$.\\
Finally, we denote $\iota_1:\ \Gra(2,V_1^*) \rightarrow \PPr(\Lambda^2(V_1^*))$ and $\iota_2:\ \Gra(2,V_2) \rightarrow \PPr(\Lambda^2(V_2))$ the Pl\"{u}cker embeddings of $\Gra(2,V_1^*)$ and $\Gra(2,V_2)$ respectively.

Let us consider the variety
\begin{align*}
\ZZZ:=\{&(f,L,E_1,E_2) \in \Hom(V_1,V_2)^{\leq 2} \times \PPr(\Hom(V_1,V_2)^{\leq 2}) \times \Gra(2,V_1^*) \times \Gra(2,V_2) \ | \\
            & f \in L, E_1^{\perp} \subset \Ker(L) \text{ and } \Ima(L) \subset E_2 \},
\end{align*}
then there is a diagram
\begin{equation*}
\xymatrix{ &  \ZZZ \ar@{->>}[ld]_{q_1} \ar@{->>}[rd]^{q_2} \\   \YYY_0 && \Gra(2,V_1^*) \times \Gra(2,V_2) }
\end{equation*}
where $q_1$ and $q_2$ are the natural projections. We recall that the varieties $\YYY_0$ and $\YYY_1$ were defined in Section \ref{yy}.

\begin{lemma} \label{identificationblowup}
With the previous notation, there is an isomorphism
$\YYY_1 \cong \ZZZ$
which identifies the blow-up $\YYY_1 \rightarrow \YYY_0$ with $q_1:\ \ZZZ \rightarrow \YYY_0$.
\end{lemma}

\begin{proof}
We denote $\FFF_0$ the strict transform of $\Hom(V_1,V_2)^{\leq 1}$ via the blow-up $\YYY_0 \rightarrow \Hom(V_1,V_2)^{\leq 2}$, and we consider 
$$\alpha: \newdir{ >}{{}*!/-5pt/\dir{>}}
  \xymatrix{
     \YYY_0 \ar@{-->}[r]  & \PPr(\Hom(\Lambda^2(V_1),\Lambda^2(V_2))^{\leq 1}) 
    }$$ 
the rational map defined over
$$U:=\{(f,L) \in \YYY_0\ \mid  \rank(L)=2\}=\YYY_0 \backslash \FFF_0$$ 
by $\alpha(f,L)=L \wedge L$. Let $\Gamma \subset  \YYY_0 \times \PPr(\Hom(\Lambda^2(V_1),\Lambda^2(V_2))^{\leq 1})$ be the graph of $\alpha$, then
$$\Gamma=\{(f,L,E)\in U \times \PPr(\Hom(\Lambda^2(V_1),\Lambda^2(V_2))^{\leq 1}) \mid  L \wedge L =E \}.$$
The closure of $\Gamma$ inside $\YYY_0 \times \PPr(\Hom(\Lambda^2(V_1),\Lambda^2(V_2))^{\leq 1})$ is
$$\overline{\Gamma}=\left \{(f,L,E)\in \YYY_0 \times \PPr(\Hom(\Lambda^2(V_1),\Lambda^2(V_2))^{\leq 1})\ \middle|  \begin{array}{l}
\forall l  \in L,\ l \wedge l  \in E; \\
\Ima(E) \in \iota_2(\Gra(2,V_2));\\
\Ker(E)^{\perp} \in \iota_1(\Gra(2,V_1^*)).
\end{array} \right \}, $$
where $\Ima(E):=\Ima(g)$ and $\Ker(E):=\Ker(g)$ for any non-zero element $g \in E$.\\

\noindent \underline{Claim:} The map $\overline{\Gamma} \rightarrow \YYY_0,\ (f,L,E) \mapsto (f,L)$ is the blow-up of $\YYY_0$ along $\FFF_0$. \\

To prove the claim, we start by constructing a covering of $\YYY_0$ by affine open sets. Recalling that $\PPr(\Hom(V_1,V_2)^{\leq 2}) \subset \PPr(\Hom(V_1,V_2)) \cong \PPr(\MMM_{n_2,n_1})$, we define an affine open subset $O_{i,j} \subset \PPr(\MMM_{n_2,n_1})$, for all $1 \leq i \leq n_2$ and $1 \leq j \leq n_1$, by fixing the $(i,j)$-coordinate equal to 1. Then, we denote 
$$\YYY_0^{i,j}:=\{(f,l) \in \Hom(V_1,V_2)^{\leq 2} \times (\PPr(\Hom(V_1,V_2)^{\leq 2}) \cap O_{i,j})\ |\ f \in \left\langle l \right\rangle\},$$ 
which is an affine open subset, maybe empty, of $\YYY_0$. The $\YYY_0^{i,j}$ form a covering of $\YYY_0$. Let $\III$ be the ideal sheaf of $\FFF_0$ in $\YYY_0$, then $\III_{| \YYY_0^{i,j}}$ is the ideal of $k[\YYY_0^{i,j}]$ generated by the minors of size 2 of the matrix $l$. The restriction of $\alpha$ to $\YYY_0^{i,j}$ is the rational map $\alpha_{i,j}: \newdir{ >}{{}*!/-5pt/\dir{>}}
  \xymatrix{
\YYY_0^{i,j} \ar@{-->}[r]  & \PPr(\Hom(\Lambda^2(V_1),\Lambda^2(V_2))^{\leq 1})} $
defined over $U_{i,j}:=\{(f,l) \in \YYY_0^{i,j} \ |\ \rank(l)=2\}$ by $\alpha_{i,j}(f,l)=\left\langle  l \wedge l \right\rangle$.          
We denote $\Gamma_{i,j} \subset \YYY_0^{i,j} \times \PPr(\Hom(\Lambda^2(V_1),\Lambda^2(V_2))^{\leq 1})$ the graph of $\alpha_{i,j}$, then by \cite[Proposition IV.22]{EH}, the morphism $\overline{\Gamma_{i,j}} \rightarrow \YYY_0^{i,j},\ (f,l,E) \mapsto (f,l)$ is the blow-up of $\YYY_0^{i,j}$ along $\FFF_0 \cap \YYY_0^{i,j}$. One can check that these blows-up glue together to obtain the morphism $\overline{\Gamma} \rightarrow \YYY_0,\ (f,L,E) \mapsto (f,L)$ as expected.\\

It remains to show that $\overline{\Gamma}$ is isomorphic to $\ZZZ$. The Segre embedding gives an isomorphism
$$\begin{array}{ccc}
\PPr(\Hom(\Lambda^2(V_1),\Lambda^2(V_2))^{\leq 1}) & \cong & \PPr(\Lambda^2(V_1)^*) \times \PPr(\Lambda^2(V_2)), \\
 E & \mapsto & (\Ker(E)^{\perp}, \Ima(E)) 
\end{array}$$
hence an isomorphism
$$\overline{\Gamma} \cong \left \{ (f,L,L_1,L_2)\in \YYY_0 \times \PPr(\Lambda^2(V_1)^*) \times \PPr(\Lambda^2(V_2))\ \middle| \begin{array}{l}
       L_1 \in \iota_1(\Gra(2,V_1^*)); \\
          L_2 \in \iota_2(\Gra(2,V_2));\\
         \forall l \in L, \   \Ker(l \wedge l)^{\perp} \subset L_1; \\        
          \hspace{12mm} \Ima(l \wedge l)  \subset L_2. 
\end{array} \right \}.$$
Let $f \in \Hom(V_1,V_2)^{\leq 2}$, then $f \wedge f \in \Hom(\Lambda^2(V_1), \Lambda^2(V_2))^{\leq 1}$. If $\rank(f) \leq 1$, then $f \wedge f=0$. Otherwise, $\Ima(f \wedge f)=L_2$ for some $L_2 \in \iota_2(\Gra(2,V_2))$, and $\Ker(f \wedge f)^{\perp}=L_1$ for some $L_1 \in \iota_1(\Gra(2,V_1^*))$. For $i=1,2$, we denote $E_i$ the preimage of $L_i$ by $\iota_i$. We then have the equivalences
\begin{align*}
\Ima(f \wedge f)=L_2 &\Leftrightarrow \Ima(f)=E_2; \text{ and } \\  
\Ker(f \wedge f)^{\perp}=L_1 &\Leftrightarrow \Ker(f)^{\perp}=E_1. 
\end{align*}
It follows that $\overline{\Gamma} \cong \ZZZ$.   
\end{proof}

We identify $\YYY_1$ with $\ZZZ$ by Lemma \ref{identificationblowup}. The natural action of $G'$ on 
$$ \Hom(V_1,V_2)^{\leq 2} \times \PPr(\Hom(V_1,V_2)^{\leq 2}) \times \Gra(2,V_1^*) \times \Gra(2,V_2)$$ 
stabilizes $\YYY_1$. We fix $(E_1,E_2) \in \Gra(2,V_1^*) \times \Gra(2,V_2)$, and we denote $P \subset G'$ the stabilizer of $(E_1,E_2)$. Identifying $\Gra(2,V_1^*) \times \Gra(2,V_2) \cong G'/P$, and denoting $F'$ the scheme-theoretic fiber of the natural projection $\YYY_1 \rightarrow G'/P$, there is a $G'$-equivariant isomorphism    
\begin{equation*}
\YYY_1 \cong G' {\times}^{P} F'.
\end{equation*}
One can check that 
$$F' \cong \{(f,L) \in \Hom(V_1/E_1^{\perp},E_2) \times \PPr( \Hom(V_1/E_1^{\perp},E_2))\ |\ f \in L\}$$
is the blow-up of $\Hom(V_1/E_1^{\perp},E_2)$ at $0$. In other words, $\YYY_1$ is the total space of the $G'$-homogeneous bundle $Bl_0(\Hom(\underline{V_1}/T_1^{\perp},T_2))$, where $T_1$ resp. $T_2$, is the tautological bundle of $\Gra(2,V_1^*)$ resp. of $\Gra(2,V_2)$, and $\underline{V_1}$ is the trivial bundle with fiber $V_1$ over $\Gra(2,V_1^*)$. By Propositions \ref{reduction1} and \ref{gammaiso}, there is a $G'$-equivariant isomorphism 
\begin{equation*}
\HHH \cong \YYY_1.
\end{equation*}

With the notation of Corollary \ref{reductionHdiag}, there is a commutative diagram      
\begin{equation*} 
\xymatrix {
    {G' {\times}^P {\HHH}'} \ar[rr]^{G' \times^P {\gamma}'} \ar[rd]_{\cong} \ar[dd]_{\phi}^{\cong} && {G' {\times}^P W'/\!/G} \ar[dd]^{\theta} \\
    & {G' {\times}^P Bl_0(W'/\!/G)} \ar[ru]_{pr_{1,2}} \ar[dd]_(0.3){\cong} \\
    \HHH \ar[rr]_(0.6){\gamma}  \ar[rd]_{\cong} && {W/\!/G} \\
    & \YYY_1 \ar[ru]_{pr_1} }  
\end{equation*}   
where $Bl_0(W'/\!/G)$ denotes the blow-up of $W'/\!/G=\Hom(V_1/E_1^{\perp},E_2)$ at 0, 
$$\begin{array}{ccccc}
pr_{1,2} & : & G' {\times}^P Bl_0(W'/\!/G) & \rightarrow & G' {\times}^P W'/\!/G, \\
& & (g',f,L)P & \mapsto & (g',f)P 
\end{array}$$
and
$$\begin{array}{ccccc}
pr_1 & : & \YYY_1 & \rightarrow &  W/\!/G.  \\
& & (f,L,E_1,E_2) & \mapsto & f 
\end{array}$$  
In particular, the Hilbert-Chow morphism $\gamma$ identifies with the composition of the blows-up $\YYY_1 \rightarrow \YYY_0 \rightarrow W/\!/G$, which completes the proof of Theorem \ref{GL2}.

\begin{proposition}
Let $G=GL(V)$ and $W=\Hom(V_1,V) \times \Hom(V,V_2)$, with $n_1=n_2>n=2$. Then, $W/\!/G=\Hom(V_1,V_2)^{\leq 2}$ is singular and Gorenstein. Moreover, the desingularization $\gamma:\ \HHH \rightarrow W/\!/G$ is not crepant.
\end{proposition}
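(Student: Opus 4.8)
The plan is as follows. The two assertions about $W/\!/G$ are immediate from \S\ref{description_quotient}: since $n_1=n_2>n=2$, the singular locus $\Hom(V_1,V_2)^{\leq 1}$ of $W/\!/G=\Hom(V_1,V_2)^{\leq 2}$ is a proper nonempty subvariety (by \cite[\S6.1]{Wey}), so $W/\!/G$ is singular, and it is Gorenstein (by \cite[Theorem 5.5.6]{Sv}) because $n_1=n_2$. For the non-crepancy I would compute the discrepancy of the exceptional divisor of the first of the two blow-ups and show it is nonzero. Write $D:=W/\!/G$ and $P:=\PP(\Hom(V_1,V_2)^{\leq 2})$; note $P$ is Gorenstein since $D$ is, so $\omega_P$ is a line bundle. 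By Theorem \ref{GL2} the Hilbert--Chow morphism factors as $\HH\cong\YY_1\xrightarrow{b_2}\YY_0\xrightarrow{b_1}D$, where $b_1$ is the blow-up of $D$ at $0$, $\YY_0=\OO_P(-1)$ is the total space of the tautological line bundle with projection $\pi\colon\YY_0\to P$, and the exceptional divisor of $b_1$ is the zero section $E_1\cong P$, with normal bundle $N_{E_1/\YY_0}\cong\OO_P(-1)$. Since $D$ is a Gorenstein affine cone, $\omega_D\cong\OO_D$ (see \cite[\S A.1.1]{Terp}), hence $b_1^{*}\omega_D\cong\OO_{\YY_0}$ and the discrepancy $a_1$ of $E_1$ is defined by $\omega_{\YY_0}\cong\OO_{\YY_0}(a_1E_1)$. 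Using the standard identity $\omega_{\YY_0}\cong\pi^{*}(\omega_P\otimes\OO_P(1))$ for the total space of the line bundle $\OO_P(-1)$, and restricting the resulting isomorphism $\OO_{\YY_0}(a_1E_1)\cong\pi^{*}(\omega_P\otimes\OO_P(1))$ to $E_1\cong P$ (on which $\pi$ is the identity), one obtains $\OO_P(-1)^{\otimes a_1}\cong\omega_P\otimes\OO_P(1)$, i.e.\ $\omega_P\cong\OO_P(-a_1-1)$. Everything therefore comes down to computing $\omega_P$.

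To compute $\omega_P$ I would use the standard desingularization $f\colon\widehat P:=\PP_{\Gr(2,V_2)}\bigl(\Hom(\underline{V_1},T)\bigr)\to P$, where $T$ is the tautological rank-$2$ subbundle of $\Gr(2,V_2)$ and $\underline{V_1}$ the trivial bundle with fibre $V_1$; the map $f$ sends $(L,[\phi\colon V_1\to L])$ to $[\phi]\in\PP(\Hom(V_1,V_2))$, is birational (a general rank-$2$ map determines its image plane), and restricts to an isomorphism over $P\setminus\PP(\Hom(V_1,V_2)^{\leq 1})$, whose complement has codimension $n_1+n_2-3\geq 2$ in $P$ (as $n_1=n_2\geq 3$). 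Combining the relative Euler sequence of the projective bundle $\widehat P\to\Gr(2,V_2)$ with the facts that $\omega_{\Gr(2,V_2)}$ is $\OO(-n_2)$ in the Pl\"ucker embedding and that $\det\Hom(\underline{V_1},T)\cong(\det T)^{\otimes n_1}$ is $\OO(-n_1)$ (since $\Hom(\underline{V_1},T)\cong\underline{V_1}^{*}\otimes T$ with $\underline{V_1}^{*}$ trivial and $\det T=\OO(-1)$), the two Pl\"ucker twists cancel — this is where $n_1=n_2$ is used — and one finds $\omega_{\widehat P}\cong\OO_{\widehat P}(-2n_1)=f^{*}\OO_P(-2n_1)$. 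Since determinantal varieties have rational singularities, $\omega_P\cong f_{*}\omega_{\widehat P}\cong\OO_P(-2n_1)$ by the projection formula (equivalently, $P$ being normal, a line bundle on $P$ is determined by its restriction to the isomorphism locus of $f$; equivalently still, one may quote that the $a$-invariant of this Gorenstein determinantal ring equals $-2n_1$). Comparing with $\omega_P\cong\OO_P(-a_1-1)$ gives $a_1=2n_1-1$.

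As $n_1=n_2>2$, the discrepancy $a_1=2n_1-1\geq 5$ is nonzero. The strict transform $\widetilde{E_1}\subset\YY_1\cong\HH$ of $E_1$ under $b_2$ is a prime divisor whose discrepancy over $D$ is still $a_1$: the centre $\widetilde{\Hom(V_1,V_2)^{\leq 1}}$ of $b_2$ is not contained in $E_1$, so $b_2$ is an isomorphism near the generic point of $\widetilde{E_1}$ and fixes the associated divisorial valuation; hence $K_{\HH}=\gamma^{*}K_D+a_1\widetilde{E_1}+(\text{other exceptional divisors})$ with $a_1\neq 0$, and $\gamma$ is not crepant. I expect the only genuinely delicate part of the argument to be the computation of $\omega_P$: one must keep the conventions for $\PP(\,\cdot\,)$, for $\OO(1)$, and for the tautological bundles consistent across $\YY_0$, $\widehat P$, and the ambient projective space $\PP(\Hom(V_1,V_2))$ (in particular the normalisation $\det T=\OO(-1)$), and justify the identity $\omega_{\widehat P}=f^{*}\omega_P$, which rests on the normality of $P$ together with the codimension-$\geq 2$ bound on the exceptional locus of $f$ (or, alternatively, on the rational singularities of determinantal varieties).
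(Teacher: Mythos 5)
Your argument is correct, but it takes a genuinely different route from the paper. The paper does not compute any discrepancy: it exhibits the two small resolutions $R_1=\{(M,L_1)\ |\ L_1^{\perp}\subset\Ker(M)\}$ and $R_2=\{(M,L_2)\ |\ \Im(M)\subset L_2\}$ over $\Gr(n,V_1^*)$ and $\Gr(n,V_2)$, observes that these are crepant because their exceptional loci have codimension $n_2-n+1\geq 2$ resp. $n_1-n+1\geq 2$, and then invokes the minimality of crepant desingularizations (\cite[Lemme A.1.2]{Terp}) together with the factorization $\HH\rightarrow R=R_1\times_{W/\!/G}R_2\rightarrow R_i\rightarrow W/\!/G$ coming from Theorem \ref{GL2}: since $\HH$ properly dominates $R_1$ (the map $q$ is a nontrivial blow-up), $\gamma$ cannot be crepant. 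You instead compute the discrepancy of the divisor lying over the origin: using $\omega_D\cong\OO_D$, $\omega_{\YY_0}\cong\pi^*(\omega_P\otimes\OO_P(1))$, $N_{E_1/\YY_0}\cong\OO_P(-1)$ and $\omega_P\cong\OO_P(-2n_1)$ (which you verify via the resolution $\PP_{\Gr(2,V_2)}(\Hom(\underline{V_1},T))$, and which agrees with the $a$-invariant $-2n_1$ of the Gorenstein determinantal ring), you get discrepancy $2n_1-1>0$ along the strict transform $\widetilde{E_1}$, hence non-crepancy. This is in effect the strategy the paper itself uses for the $SL_n$ case (Proposition \ref{res_crepantes_SLn}), carried over to Case 4; it is more quantitative (it produces the actual discrepancy) and avoids both the minimality lemma and the small resolutions, at the price of the $\omega_P$ computation and the bookkeeping of conventions, which you handle correctly. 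One small slip in the last step: to conclude that $b_2$ is an isomorphism near the generic point of $\widetilde{E_1}$ you need $E_1\not\subset\FF_0$ (so that the generic point of $E_1$ lies outside the centre of $b_2$), not that the centre is not contained in $E_1$; the needed non-containment is immediate since $\dim\FF_0=n_1+n_2-1<2n_1+2n_2-5=\dim E_1$. With that rephrasing, and the standard fact that a line bundle on the normal variety $\YY_0$ which is trivial off the prime Cartier divisor $E_1$ is of the form $\OO_{\YY_0}(mE_1)$ (so that your integer $a_1$ exists), your proof is complete.
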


\begin{proof}
We saw in Section \ref{description_quotient} that $W/\!/G=\Hom(V_1,V_2)^{\leq 2}$ is singular and Gorenstein when $n_1=n_2>n=2$. There are two well-known crepant desingularizations of $W/\!/G$:
\begin{equation*}
\xymatrix{ &  R_1 \ar[ld]_{p_{12}} \ar[rd]^{p_{11}} && R_2 \ar[ld]_{p_{21}} \ar[rd]^{p_{22}} & \\   
            \Gra(n,V_1^*) &&   W/\!/G && \Gra(n,V_2)  }
\end{equation*}
where
\begin{align*}
R_1:&=\left \{(M,L_1) \in W/\!/G \times \Gra(n,V_1^*) \ \mid  \ L_1^{\perp} \subset \Ker(M) \right \};\\
R_2:&=\left \{(M,L_2) \in W/\!/G \times \Gra(n,V_2) \ \mid  \ \Ima(M) \subset L_2 \right \};
\end{align*}
and the $p_{ij}$ are the natural projections. The variety $R_1$ resp. $R_2$, is the total space of the vector bundle $\Hom(\underline{V_1}/T_1^{\perp},\underline{V_2}) \cong T_1 \otimes \underline{V_2}$ over $\Gra(n,V_1^*)$ resp. of the vector bundle $\Hom(\underline{V_1},T_2) \cong \underline{V_1^*} \otimes T_2$ over $\Gra(n,V_2)$, hence $R_1$ resp. $R_2$, is smooth and $p_{11}$ resp. $p_{21}$, is a desingularization of $W/\!/G$. Furthermore, the exceptional locus $X_1:=p_{11}^{-1}(\overline{U_{n-1}})$ of $p_{11}$ resp. $X_2:=p_{21}^{-1}(\overline{U_{n-1}})$ of $p_{21}$, is of codimension $n_2-n+1 \geq 2$ resp. $n_1-n+1\geq 2$, hence $p_{11}$ and $p_{21}$ are crepant.\\
Let us now consider the fibered product $R:=R_1 \times_{W/\!/G} R_2$. One can check that $R$ is isomorphic to the total space  of the vector bundle $\Hom(\underline{V_1}/T_1^{\perp},T_2) \cong T_1 \times T_2$ over $\Gra(n,V_1^*) \times \Gra(n,V_2)$, hence $R$ is a smooth variety. By Theorem \ref{GL2}, there is a commutative diagram 
\begin{equation*}
\xymatrix{\HHH \ar[rd]^{q} \ar@/_7pc/[rddd]_(0.4){\gamma} && \\
 &R \ar@{->>}[dd]^{\phi}\ar@{->>}[ld]_{\phi_1} \ar@{->>}[rd]^{\phi_2} & \\
 R_1 \ar[rd]_{p_{11}} && R_2 \ar[ld]^{p_{21}} \\
 & W/\!/G &
}
\end{equation*}
where $\phi_1$ and $\phi_2$ are the natural projections, and $q$ is the blow-up of $\Hom(\underline{V_1}/T_2,T_1)$ along the zero-section. But crepant desingularizations are minimal (see \cite[Lemme A.1.2]{Te1} for a proof of this fact), and thus, as $\HHH$ is not isomorphic to $R_1$ or $R_2$, the desingularization $\gamma$ cannot be crepant. 
\end{proof}

\ \\

\noindent \texttt{Université Grenoble I, Institut Fourier,}\\
\texttt{UMR 5582 CNRS-UJF, BP 74,}\\
\texttt{38402 St. Martin d'Hères Cédex, FRANCE}\\
\textit{E-mail address:} \texttt{ronan.terpereau@ujf-grenoble.fr}

\end{document}